\title{Characterization of principal bundles:\\
the noncommutative algebraic case}
\author{William J. Ugalde
\\[12pt]
{\small
Escuela de Matemática, Universidad de Costa Rica,
11501 San José, Costa Rica%
\thanks{Email: william.ugalde@ucr.ac.cr}}}
\DeclareMathOperator{\Alg}{Alg}     
\DeclareMathOperator{\End}{End}     
\DeclareMathOperator{\Hom}{Hom}     
\DeclareMathOperator{\pr}{pr}       
\DeclareMathOperator{\tsum}{{\textstyle\sum}} 
\newcommand{\al}{\alpha}            
\newcommand{\bt}{\beta}             
\newcommand{\Dl}{\Delta}            
\newcommand{\dl}{\delta}            
\newcommand{\eps}{\varepsilon}      
\newcommand{\ga}{\gamma}            
\newcommand{\sg}{\sigma}            
\renewcommand{\th}{\theta}          
\newcommand{\vf}{\varphi}           
\newcommand{\bC}{\mathbb{C}}        
\newcommand{\bF}{\mathbb{F}}        
\newcommand{\bN}{\mathbb{N}}        
\newcommand{\bQ}{\mathbb{Q}}        
\newcommand{\bR}{\mathbb{R}}        
\newcommand{\bS}{\mathbb{S}}        
\newcommand{\bZ}{\mathbb{Z}}        
\newcommand{\g}{\mathfrak{g}}       
\newcommand{\gsl}{\mathfrak{sl}}    
\newcommand{\can}{\mathrm{can}}     
\newcommand{\co}{\mathrm{co}}       
\newcommand{\copp}{{\mathrm{cop}}}  
\newcommand{\id}{\mathrm{id}}       
\newcommand{\inv}{\mathrm{inv}}     
\newcommand{\linspan}{\mathrm{span}} 
\newcommand{\opp}{{\mathrm{op}}}    
\newcommand{\sA}{\mathcal{A}}       
\newcommand{\sB}{\mathcal{B}}       
\newcommand{\sC}{\mathcal{C}}       
\newcommand{\sF}{\mathcal{F}}       
\newcommand{\sH}{\mathcal{H}}       
\newcommand{\sO}{\mathcal{O}}       
\newcommand{\sU}{\mathcal{U}}       
\renewcommand{\geq}{\geqslant}      
\newcommand{\hookto}{\hookrightarrow} 
\newcommand{\isom}{\simeq}          
\newcommand{\lt}{\triangleright}    
\newcommand{\otto}{\leftrightarrow} 
\newcommand{\ox}{\otimes}           
\newcommand{\rt}{\triangleleft}     
\newcommand{\rx}{\rtimes}           
\newcommand{\x}{\times}             
\renewcommand{\.}{\cdot}            
\renewcommand{\:}{\colon}           
\newcommand{\hatox}{\mathrel{\widehat\otimes}} 
\newcommand{\longto}{\mathop{\longrightarrow}\limits} 
\newcommand{\oxyox}{\otimes\cdots\otimes} 
\newcommand{\stroke}{\mathbin|}     
\newcommand{\wt}{\widetilde}        
\newcommand{\xyx}{\times\cdots\times} 
\newcommand{\half}{{\mathchoice{\thalf}{\thalf}{\shalf}{\shalf}}}
\newcommand{\shalf}{{\scriptstyle\frac{1}{2}}} 
\newcommand{\thalf}{\tfrac{1}{2}}   
\newcommand{\mot}[1]{\enspace\text{#1}\enspace} 
\newcommand{\set}[1]{\{\,#1\,\}}    
\newcommand{\Set}[1]{\bigl\{\,#1\,\bigr\}} 
\newcommand{\SET}[1]{\biggl\{\,#1\,\biggr\}} 
\newcommand{\val}[1]{\langle{#1}\rangle} 
\newcommand{\word}[1]{\quad\text{#1}\quad} 
\newcommand{\braket}[2]{\langle#1\stroke#2\rangle} 
\newcommand{\duo}[2]{\langle#1,#2\rangle} 
\newcommand{\switch}[2]{{}_{#1}\mathord{\frown}_{#2}} 
\newcommand{\twobytwo}[4]{\begin{pmatrix} 
   #1 & #2 \\ #3 & #4 \end{pmatrix}}
\newcommand{\twobytwodet}[4]{\begin{vmatrix} 
   #1 & #2 \\ #3 & #4 \end{vmatrix}}
\renewcommand{\section}{\@startsection{section}{1}{\z@}%
							 {-3.25ex \@plus -1ex \@minus -.2ex}%
							 {1.5ex \@plus.2ex}%
							 {\normalfont\large\bfseries}}
\renewcommand{\subsection}{\@startsection{subsection}{2}{\z@}%
							 {-3.25ex \@plus -1ex \@minus -.2ex}%
							 {1.5ex \@plus .2ex}%
							 {\normalfont\normalsize\bfseries}}
\renewcommand{\@dotsep}{200} 
\numberwithin{equation}{section}        
\theoremstyle{plain}
\newtheorem{theorem}{Theorem}[section]  
\newtheorem{proposition}[theorem]{Proposition} 
\newtheorem{lemma}[theorem]{Lemma}      
\newtheorem{corollary}[theorem]{Corollary} 
\theoremstyle{definition}
\newtheorem{example}[theorem]{Example}  
\theoremstyle{remark}  
\newtheorem*{remark}{Remark}
\DeclareRobustCommand{\qef}{
  \ifmmode 
  \else \leavevmode\unskip\penalty9999 \hbox{}\nobreak\hfill
  \fi
  \quad\hbox{\qefsymbol}}
\newcommand{\qefsymbol}{$\circledcirc$} 
\newcommand{\hideqef}{\renewcommand{\qef}{}} 
\DeclareRobustCommand{\qeo}{
  \ifmmode 
  \else \leavevmode\unskip\penalty9999 \hbox{}\nobreak\hfill
  \fi
  \quad\hbox{\qeosymbol}}
\newcommand{\qeosymbol}{$\lozenge$} 
\begin{document}

\maketitle

\begin{abstract}
We review Hopf--Galois extensions, in particular faithfully flat ones,
accepted to be the noncommutative algebraic dual of a principal
bundle. We also make a short digression into how quantum groups relate
to Hopf–-Galois extensions. Several examples are given, in order to
provide a satisfactory understanding of each topic.
\end{abstract}

\begin{small}

\textbf{Keywords:}
Noncommutative principal bundles, 
Hopf--Galois extensions, 
quantum groups.

\textbf{MSC Classification} 16T05

\tableofcontents 

\end{small}


\section{Introduction} 
\label{sec:intro}

The Serre--Swan theorem gives a functorial connection between finitely
generated projective modules and smooth sections over a vector bundle
(see \cite[Chap.~2]{PepeJoeHector}, for example). It gives a
noncommutative version of a `commutative' object: a vector bundle or
its sections gets replaced by a finitely generated projective module.
However, when it comes to principal bundles, there is no a concrete
way of passage, regardless of a consolidated and nowadays accepted
interpretation of what is a `noncommutative principal bundle', namely,
a Hopf--Galois extension. The search for the most adequate substitute
or generalization of a principal bundle in noncommutative geometry has
had two main approaches: the noncommutative algebraic approach and the
$C^*$-algebraic formulation. This classification of the development of
the subject in two components coincides with that given
in~\cite{Tobolski}. These notes comprise a review of Hopf--Galois
extension, accepted or understood as a noncommutative principal
bundle, from the algebraic point of view.

The driving idea is to extend the injectivity of the canonical map
(which is trivially surjective)
$\can \: X \x G \to X \x_G X : (x,g) \mapsto (x,x\.g)$ associated to
an action, as follows. Its injectivity is equivalent (in the compact
or locally compact situations) to the surjectivity of the homomorphism
$\can^* \: C(X \x_G X) \to C(X \x G) : f \mapsto f \circ \can$, which
is now a map acting between algebras. The choice of the type of
algebra under study shows the kind of extension one wants to achieve.
Two of the main possibilities are Hopf algebras (related to the
replacement of the group $G$ for a more general structure), and
$C^*$-algebras, which was suggested by Connes in the direction of
$C(X \x G)$ being replaced by $C(X) \rx G$. Provided that the base
manifold $X$ \textit{and} the symmetry Lie group $G$ are both compact,
one can work algebraically, since the matrix elements of irreducible
representations of~$G$ form a Hopf algebra. Out of this compactness
assumption on $G$, many things can go out of control and open a whole
different universe. This is what is meant in \cite{Tobolski} by saying
``the setting of Hopf--Galois extensions cannot be used to obtain a
generalization of principal bundles beyond the compact case''.
However, simple examples of Hopf fibrations such as the principal
bundle $(\bS^7,\sg,\bS^4,SU(2))$ show that the compact case is
interesting enough to work out its algebraic formulation carefully.

Before ``going noncommutative'', one should first understand the
commutative case in algebraic terms. The following interpretation goes
along the lines of~\cite{BaumHMS}. If $G$ is a compact Lie group, let
$H = \sO(G)$ be the algebra of (complex-valued) \textit{representative
functions} on~$G$, which consists of linear combinations of the matrix
elements $a(g) = \braket{\eta}{\rho(g)\xi}$ of irreducible
representations $\rho$ of~$G$. These are smooth functions, so that
$\sO(G) \subset C^\infty(G)$. Under the coproduct
$\Dl \: H \to H \ox H$, counit $\eps \: H \to \bC$ and antipode
$S \: H \to H$, given by%
\footnote{The definition of $\Dl$ implicitly uses the relation
$\sO(G) \ox \sO(G) \isom \sO(G \x G)$.}
\[
\Dl a(g,h) := a(gh),  \qquad
\eps(a) := a(1),  \qquad
Sa(g) := a(g^{-1}),
\]
the algebra $H$ is also a coalgebra for which $\Dl$ and $\eps$ are
algebra homomorphisms; and indeed $H$ is a \textit{Hopf algebra} (a
bialgebra with antipode).

Given a principal $G$-bundle $\sg \: P \to M$, consider the algebra 
homomorphism 
$$
\dl \: C^\infty(P) \to C^\infty(P \x G)
\isom C^\infty(P) \hatox C^\infty(G)
$$
given by $\dl f(u,g) := f(ug)$. The tensor product $\hatox$ that
appears here is the so-called \textit{projective} tensor product of
locally convex topological vector spaces \cite{Treves67}, not the
\textit{algebraic} tensor product (over~$\bC$) that is written with
$\ox$ unornamented. When $P = G$ and $M$ is a point, this extends the
coproduct $\Dl$ on $\sO(G)$ to the larger space $C^\infty(G)$, but
this larger space is generally not itself a Hopf algebra. The
paper~\cite{BaumHMS} introduced the following concept, which allows to
use the algebraic instead of the projective tensor product. With $G$
compact, let $\sC^\infty(P)$ be the subalgebra of $C^\infty(P)$ given
by
$$
\sC^\infty(P)
:= \set{f \in C^\infty(P) : \dl f \in C^\infty(P) \ox \sO(G)}.
$$
$\sC^\infty(P)$ is called the algebra of \textit{upright smooth
functions} on~$P$. It turns out that if $\dl f = \sum_j f_j \ox a_j$
for $f \in \sC^\infty(P)$, then each $f_j$ lies in $\sC^\infty(P)$
also. Thus, by restriction, one obtains an algebra homomorphism
\[
\dl \: \sC^\infty(P) \to \sC^\infty(P) \ox \sO(G),
\]
which is a \textit{right coaction} of $H = \sO(G)$ on
$A := \sC^\infty(P)$, namely,
$$
(\dl \ox \id_H)\,\dl = (\id_A \ox \Dl)\,\dl
: \sC^\infty(P) \to \sC^\infty(P) \ox \sO(G) \ox \sO(G),
$$
as well as $(\id_A \ox \eps)\,\dl = \id_A$. The last display reduces
to the identity $f((ug)g') \equiv f(u(gg'))$, so it simply encodes the
right action of $G$ on~$P$.

In these notes we explore the venue that goes through Hopf algebras to
understand what is a Hopf--Galois extension, in particular a
faithfully flat one, accepted to be the noncommutative algebraic dual
of a principal bundle.

The first step we shall take is to address the meaning of a
`noncommutative space'. Out of many inspiring references, the notes by
Kassel \cite{Kassel1} help to build the discussion in
Section~\ref{sec:NCspaces} in which we try to motivate the
understanding of a noncommutative algebra as a replacement for an
ordinary space. Even though this could warrant replacing every
`commutative' object by a noncommutative substitute, it is important
to keep in mind situations like \cite{Ellwood}, which presents an
example ``of a commutative base and a commutative total space but it
is the way in which the fibers are entangled that produces a
noncommutative bundle.'' Meaning that the real greatness of
noncommutative geometry comes from how it intertwines traditional
spaces of analysis and geometry with a far-reaching point of view in
which commutative and noncommutative objects merge.

There are many possibilities to study, with more or less generality, a
Hopf--Galois extension. We chose to study extensions of algebras by
bialgebras based on a commutative associative unital ring. In
Section~\ref{sec:HG-for-dummies}, we assemble all the required
concepts to arrive at the definition of a Hopf--Galois extension. In
Section~\ref{sec:HG-extensions} we study the concept of a Hopf algebra
and its relation with Hopf--Galois (and faithfully flat Hopf--Galois)
extensions, nowadays considered as noncommutative principal bundles
\cite{AschieriFioresiLatini, AschieriBieliavskyPaganiSchenkel,
AschieriLandiPagani, AschieriLandiPagani2, BrzezinskiSzymanski,
Zanchettin}.

Section~\ref{sec:Qgroups-HG-ext} makes a short digression into how
quantum groups relate to Hopf–Galois extensions. In particular, for
every proper quantum group (a noncommutative noncocommutative Hopf
algebra) $H$, $k \subseteq H$ is a Hopf--Galois extension by the
bialgebra $H$. The last part of this section gives a short account of
what is accepted as a `quantum principal bundle': first, based of the
original pioneering works \cite{Brzezinski94,BrzezinskiMajid}, and
then addressing the recent papers \cite{AschieriLandiPagani,
AschieriBieliavskyPaganiSchenkel}.

Throughout the text, we offer several examples, aiming to provide a
satisfactory understanding of each topic.

In subsection~\ref{ssc:tracking-the-idea} we give a brief account, in
chronological order, of some of the more appealing references about
the concept of a Hopf--Galois extension as a noncommutative
replacement for a principal bundle. Out of all sources available, we
cite those that we were fortunate enough to come across.


\subsection{Tracking down the idea of a Hopf--Galois extension} 
\label{ssc:tracking-the-idea}

In the beginning, \cite{Schneider1990} stands as the point of
departure providing the main tool for the passage from modules to
comodules and from algebras to coalgebras: ``Once there is a general
theorem like Theorem~I, one can formally dualize it, i.e., turn all
the arrows around.''
One of the first to spot the potential importance of that reference
was \cite{Durdevic1993} which proposes ``a quantum generalization of
the theory of principal bundles, in which quantum groups play the role
of structure groups, and quantum spaces the role of base manifolds.''
And in \cite{Durdevic95}: 
``In the framework of noncommutative differential geometry, the
Hopf--Galois extensions are understandable as analogues of principal
bundles. [\dots] The freeness of the action of the structure group is
equivalent to the injectivity of the introduced canonical map.''
More involved in the subject is Brzeziński \cite{Brzezinski94}, who
constructed a noncommutative version of a translation map by
dualization and showed that the notion of a quantum principal bundle
is equivalent to the existence of this generalized translation map.

In \cite{BrzezinskiMajid}, Brzeziński and Majid proposed
noncommutative or `quantum' versions of homogeneous spaces, analogues
of trivial principal and associated bundles and of gauge
transformations on trivial quantum vector bundles, using the
dualization process. They chose as algebras Hopf algebras. Around the
same early years, Pflaum, exploring the categorical side of the
subject \cite{Pflaum}, studied noncommutative vector and principal
bundles within the language of quantum spaces. Also, Hajac
\cite{HajacThesis} analyzed a class of `strong' connections on quantum
principal bundles that provide a link between the two approaches to
noncommutative differential geometry based on quantum principal
bundles and on projective modules.

The first years of the viewpoint of considering a Hopf--Galois
extension as a dual notion of a principal bundle (up to 1999) are
completed with \cite{BrzezinskiMajid2, Schauenburg1998, Brzezinski99,
HajacMajid}. During the first ten years of the new century, the
following references stand out: \cite{Schauenburg2002, Grunspan,
SchauenburgFIC, SchauenburgSchneider, BrzezinskiHajac2004, BaumHMS,
Bohm07, BrzezinskiZielinski, BrzezinskiHajac2009}. Among the references
developing specific examples we cite~\cite{LandiSuijlekom}.

A refreshed effort in this direction is found in studies of the
invertibility of the canonical map. Of special interest is the
sequence \cite{AschieriBieliavskyPaganiSchenkel, AschieriLandiPagani,
AschieriLandiPagani2}, together with \cite{BrzezinskiSzymanski}.

As mentioned in \cite{Tobolski}, the noncommutative algebraic setting
(of Hopf--Galois extensions) has limitations when trying to extend
principal bundles beyond the compact case. There is where the
$C^*$-algebraic approach enters. Even though we do not walk this path
in the present work, it is worth mentioning a few significant
references. One of the earliest is \cite{Rieffel1} that studies proper
actions in this setting (Ellwood \cite{Ellwood} contains an equivalent
and newer perspective). Then \cite{BudzinskiKondracky} introduces the
notion of locally trivial quantum principal bundles using compact
quantum spaces (unital $C^*$-algebras) for the base and the total
space. In \cite{BaumDeComerHajac} it is proved that an action in the
$C^*$-algebraic setting is free if and only if the canonical map
(obtained from the underlying Hopf algebra of the compact quantum
group) is an isomorphism, see also \cite{BaumHajac}. The references
\cite{Wagner12, Wagner13} contain an important attempt to translate
the geometric concept of principal bundles to noncommutative
differential geometry, by means of dynamical systems and
representations of the corresponding transformation groups. Last, an
excellent account of the development of quantum groups is given
in~\cite{VanDaele}.


\section{Noncommutative spaces} 
\label{sec:NCspaces}

A driving idea in noncommutative geometry is to replace a space $X$ by
a commutative algebra of functions $\sO(X)$ from $X$ to $\bC$, in such
a way that knowledge of $\sO(X)$ allows one to recover $X$ up to an
isomorphism depending on the category. One can then replace the
algebra $\sO(X)$ by a possibly noncommutative algebra $\sA$ and
consider it as the algebra of functions for a putative noncommutative
space. As stated by Pareigis~\cite{Pareigis}, ``the \textit{dream of
geometry} would be that these two categories, the category of
(certain) spaces and the category of (certain) algebras, are dual to
each other.''

\begin{example} 
\label{eg:first}
Consider as a first example a \textit{finite set}
$X = \{x_1,\dots,x_n\}$ and for $\sO(X)$ take the commutative algebra
$\set{f \: X \to \bC}$, with point-wise operations
$(f + c g)(x) = f(x) + c g(x)$, $(fg)(x) = f(x) g(x)$ and unit the
constant function $1$ over~$X$.

Denoting $\dl_x(y) = \dl_{x,y}$ for any $x \in X$: the function that
is equal to $1$ if $y = x$ and $0$ otherwise, then any $f \in \sO(X)$
is written as $f = f(x_1) \dl_{x_1} +\cdots+ f(x_n) \dl_{x_n}$, so
that $\{\dl_{x_1}, \dots, \dl_{x_n}\}$ is a basis for
$\sO(X) \equiv \bC^n$, where $\dim \sO(X) = n$ is precisely the
cardinality of $X$.
\end{example}

\begin{example} 
\label{eg:second}
As a second example, let $X = \set{(x_1,\dots,x_n) \in \bC^n
: P(x_1,\dots,x_n) = 0}$ for every $P \in \sF\}$ be the
\textit{algebraic variety} corresponding the family of polynomials
$\sF \subset \bC[X_1,\dots,X_n]$. To this space one associates the
commutative \textit{coordinate algebra}
$\sO(X) = \bC[X_1,\dots,X_n]/I_\sF$, where $I_\sF$ is the ideal
generated by~$\sF$. Evidently, $X$ is determined as the set of points
$(x_1,\dots,x_n) \in \bC^n$ that are common solutions of the equations
$P(x_1,\dots,x_n) = 0$.

The previous example of a finite set $X = \{x_1,\dots,x_n\}$, can be
recovered from the one-element family
$\sF = \{(X_1 - x_1) \cdots (X_n - x_n)\}$. Also, with $\sF = \{0\}$,
$\sO(X) = \bC[X_1,\dots,X_n]$ gives $X = \bC^n$. In particular,
$\sO(X) = \bC[X_1,X_2]$ gives $X = \bC^2$. 
\end{example}

Let us replace this commutative algebra with a noncommutative one, and
see what sort of object we recover. This example is called in the
literature the \textit{quantum plane}.

\begin{example} 
\label{eg:quantum-plane}
Inside the noncommutative algebra $\bC\val{X_1,X_2}$ of non-commuting
polynomials in two variables $X_1$ and~$X_2$, define for any
$q \in \bC \setminus \{0\}$ the ideal $I_q$ generated by the
polynomial $X_1 X_2 - q X_2 X_1$ and the noncommutative algebra
$\bC_q[X_1,X_2] := \bC\val{X_1,X_2}/I_q$. Evidently, if $q = 1$, we
recover $\bC[X_1,X_2]$. One says that $\bC_q[X_1,X_2]$ is a
\textit{one-parameter, noncommutative deformation (or quantization)}
of $\bC[X_1,X_2]$.

The set $\set{X_1^i X_2^j : i,j \geq 0}$ forms a basis for
$\bC_q[X_1,X_2]$. In order to identify the space~$X$ for which
$\sO(X) = \bC_q[X_1,X_2]$, we need some sort of procedure that we do
not go into here, although we make a little digression in
subsection~\ref{ssc:non-existing-map}. Nevertheless, the point is that
the set $\set{(x_1,0), (0,x_2) \in \bC^2 : x_1,x_2 \in \bC}$ is
intuitively contained in our desired $X$, because its points are zeros
of $X_1X_2 - q X_2X_1$ as with algebraic varieties, but how do we know
if this is precisely~$X$? One possibility is to consider the
\textit{characters} of the algebra $\bC_q[X_1,X_2]$. In general, for
an algebra $\sA$ its sets of characters is
\[
\Alg(\sA,\bC) := \set{\chi \in \Hom_\bC(\sA,\bC)
: \chi(aa') = \chi(a)\chi(a') \mot{and} \chi(1) = 1;\ a,a'\in \sA}.
\]
In the case $\sA = \bC_q[X_1,X_2]$, each character gets determined by
its complex values $\chi(X_1) = x_1$ and $\chi(X_2) = x_2$, and this
induces a bijection $\Alg(\bC_q[X_1,X_2],\bC) 
\equiv \set{(x_1,x_2) \in \bC^2 : x_1x_2 - qx_2x_1 = 0}$. As a result,
the ``noncommutative space'' known as the quantum plane is indeed
$\set{(x_1,0), (0,x_2) \in \bC^2 : x_1,x_2 \in \bC}$.

A word of caution is needed. Noncommutative geometers usually call a
(noncommutative) algebra a ``noncommutative space'', leaving backstage
the possible existence (or not) of a space represented by the given
algebra. Thus, in the case of the quantum plane, what really matters
is the algebra $\bC_q[X_1,X_2]$ itself, and not the union of the two
coordinates axes in the complex plane.
\end{example}

Now that we have settled, at least intuitively, this first level of
replacing spaces by algebras of functions, let us pass to a more
involved situation. We wish to transfer properties of the space~$X$ to
its associated algebra~$\sO(X)$.

As before, consider two sets $X$ and $Y$ and their Cartesian product
$X \x Y$ with its two projections $\pi_1 \: X \x Y \to X$ and
$\pi_2 \: X \x Y \to Y$, with associated algebra morphisms
$\pi_i^* \: \sO(X) \to \sO(X \x Y) : f \mapsto f \circ \pi_i$ or any
$f \in \sO(X)$ or $\sO(Y)$, and $i = 1,2$.

Here the vector spaces (underlying the algebras) are generated by the
functions $\dl_x$, $\dl_y$ and $\dl_{(x,y)}$, with $x \in X$,
$y \in Y$. The product $X \x Y$ (with its two projections) enjoys the
following universal property: if $Z$ is a third set and
$h_1 \: Z \to X$, $h_2 : Z \to Y$ are maps, there is a unique map
$h \: Z \to X \x Y$ such that $\pi_1 \circ h = h_1$ and
$\pi_2 \circ h = h_2$, namely $h = (h_1,h_2)$.%
\footnote{This universal property for the Cartesian product holds in
many other situations using the appropriate maps.}
These two relations imply
$h^* \circ \pi_1^* = h_1^* : \sO(X) \to \sO(Z)$ and 
$h^* \circ \pi_2^* = h_2^* : \sO(Y) \to \sO(Z)$.
\[
\xymatrix{
& X \x Y \ar[dl]_{\pi_1} \ar[dr]^{\pi_2} &
\\
X && Y
\\
& Z \ar[ul]^{h_1} \ar@{.>}[uu]^{h} \ar[ur]_{h_2} & }
\quad
\xymatrix{
& \sO(X \x Y) \ar@{.>}[dd]^{h^*} &
\\
\sO(X) \ar[ur]^{\pi_1^*} \ar[dr]_{h_1^*} &&
\sO(Y) \ar[ul]_{\pi_2^*} \ar[dl]^{h_2^*} 
\\
& \sO(Z) & }
\quad
\xymatrix{
\sO(X) \x \sO(Y) \ar[r]^(0.65)\vf \ar[d]_\th & \sO(Z)
\\
\sO(X \x Y) \ar@{.>}[ur]^{\tilde\vf} & }
\]
The morphism $\th \: \sO(X) \x \sO(Y) \to \sO(X \x Y)$, sending each
pair $(f,g)$ to the map $(x,y) \mapsto f(x) g(y)$, for $x \in X$,
$y \in Y$, $f \in \sO(X)$, $g \in \sO(Y)$, allows to factor each
bilinear morphism $\vf \: \sO(X) \x \sO(Y) \to \sO(Z)$ in a unique way
as $\wt\vf \circ \th = \vf$, with $\wt\vf \: \sO(X \x Y) \to \sO(Z)$.
To prove it, it is enough to consider basic elements: for $x \in X$,
$y \in Y$ take $\wt\vf(\dl_{(x,y)}) := \vf(\dl_x,\dl_y)$. Because
$\dl_{(x,y)}(x',y') = \dl_x(x') \dl_y(y')$, we can write
$\th(\dl_x,\dl_y) := \dl_{(x,y)}$ to obtain
$\wt\vf \circ \th(\dl_x,\dl_y) = \wt\vf(\dl_{(x,y)})
= \vf(\dl_x,\dl_y)$. The result is that $\sO(X \x Y)$ enjoys the same
universal property as $\sO(X) \ox \sO(Y)$, and thus we identify both
algebras.

Taking advantage of this identification, for a pair of maps
$f \: X \to Z$ and $g \: Y \to W$, the product map
$f \x g \: X \x Y \to Z \x W$ produces the algebra morphism 
$$
(f \x g)^* \: \sO(Z \x W) \equiv \sO(Z) \ox \sO(W) \to \sO(X \x Y)
\equiv \sO(X) \ox \sO(Y),
$$
which we can now write as $(f \x g)^* = f^* \ox g^*$.
 
It is often useful to study the algebra of functions on a group $G$
instead of the group itself. To motivate some of the work ahead,
consider now the algebra of functions $\sO(G)$ (also denoted $F^G$) of
all $f \: G \to \bC$ with pointwise operations
$(f + l)(g) := f(g) + l(g)$ and $(fl)(g) := f(g)\,l(g)$. Again it has
a basis $\set{\dl_g : g \in G}$. If $G$ is finite,
$\sO(G) \equiv \bC^{|G|}$ with dimension~$|G|$. We want to transfer
the features of $G$ to properties of~$\sO(G)$.

Observe that the (pointwise) multiplication in $\sO(G)$ translates
into a multiplication map 
$\mu \: \sO(G) \ox \sO(G) \to \sO(G) : f \ox f' \mapsto ff'$, extended
bilinearly.

The group $G$ has an associative product $m$, an inverse map $\inv$
and an identity element $1_G$. The product map $m \: G \x G \to G$
induces an algebra morphism
$m^* \: \sO(G) \to \sO(G \x G) \equiv \sO(G) \ox \sO(G)$ such that
$m^*(f)(g,g') = f \circ m(g,g') = f(gg')$. The associativity of~$m$
can be written as $m \x \id = \id \x m : G \x G \x G \to G$, which
implies
\[
m^*\ox \id^* = \id^* \ox m^* : \sO(G) \to \sO(G \x G \x G)
\equiv \sO(G) \ox \sO(G) \ox \sO(G),
\]
where $\id^*(f) = f \circ \id = f$, meaning that 
$\id_G^* = \id_{\sO(G)}$. Later on, we shall denote
$\Dl = m^* \: \sO(G) \to \sO(G) \ox \sO(G)$ and call it
\textit{comultiplication}. It is a homomorphism of algebras, since
$m^*(ff') = m^*(f) m^*(f')$ and 
$m^*(1_{\sO(G)}) = 1_{\sO(G)} \ox 1_{\sO(G)} = 1_{\sO(G) \ox \sO(G)}$.

The inverse map in $G$, $\inv \: g \mapsto g^{-1}$, produces the
algebra morphism $\inv^* \: \sO(G) \to \sO(G)$ satisfying
$\inv^*(f)(g) = f \circ \inv(g) = f(g^{-1})$. The map $S = \inv^*$ is
an example of an \textit{antipode}.

To deal with the identity element, consider first the diagonal map (or
canonical inclusion) $i \: G \to G \x G : g \mapsto (g,g)$ with
associated algebra morphism $i^* \: \sO(G \x G) \to \sO(G)$ given by
$i^*(h)(g) = h(g,g)$ for any $h \in \sO(G \x G)$. That $1_G$ is the
identity element for the group $G$ is equivalent to the fact that the
map $m \circ (\id \x \inv) \circ i$ is a constant map from $G$ to the
set $\{1_G\}$, with $\sO(\{1_G\}) \equiv \bC$ by $z(1_G) = z$. As a
result, $\eta := (m \circ (\id \x \inv) \circ i)^* \: \bC \to \sO(G)$
is such that
$\eta(z)(g) = z \circ m \circ (\id \x \inv) \circ i(g) = z(1_G) = z$.
The morphism $\eta \: \bC \to \sO(G) : z \mapsto [g \mapsto z]$, for
every $g \in G$, will be called the \textit{unit} of $\sO(G)$ since
$\eta(1)(f) = f$ in this algebra.

There is also a trivial inclusion $j \: \{1_G\} \to G$ which implies 
$\eps := j^* \: \sO(G) \to \sO(\{1_G\}) \equiv \bC$, with
$\eps(f) = f(1_G)$, called the \textit{counit} of $\sO(G)$. Again,
$\eps$ is an algebra homomorphism:
$\eps(ff') = \eps(f) \eps(f')$ and $\eps(1_{\sO{G}}) = 1$. The
entanglement of these morphisms is represented in the following
diagrams:
\[
\xymatrix{\ar @{} [dr]
\sO(G) \ox  \sO(G) \ar[r]^(.6)\mu \ar[d]_{\eps \ox \eps} &
\sO(G) \ar[d]^\eps  \\
\bC \isom \bC \ox  \bC  \ar[r]^(.65){\cdot} & \bC}
\quad
\xymatrix{\ar @{} [dr]
\sO(G) \ox  \sO(G) \ar[r]^\mu \ar[d]_{\Dl \ox \Dl} & \sO(G) \ar[d]^\Dl
\\
\sO(G) \ox  \sO(G) \ox  \sO(G) \ox  \sO(G) \ar[r]^(.65){\mu_\ox} &
\sO(G)\ox  \sO(G),}
\]
where $\mu_\ox = \mu \ox \mu \circ \switch{2}{3}$ (flipping the second
and third factors as an initial step) is the multiplication on
$\sO(G) \ox \sO(G)$. These two relations will be referred to as
\textit{compatibility conditions}. The third relation shows the role
of the antipode as the ``convolution inverse'' for the algebra
morphism $\eta \circ \eps$. Its meaning will be made precise later on;
for now, it is enough to observe that
\[
(S \ox \id_{\sO(G)}) \circ \Dl(f)
= (\inv^* \ox \id^*_G) \circ m^*(f)
= \bigl( m \circ (\inv \x \id_G) \bigr)^*(f) = f(1_G)
\]
for every $f \in \sO(G)$.

The package made up of an algebra $H$, four algebra morphisms
$(\mu,\eta,\Dl,\eps)$, and one antihomomorphism $S$ satisfying the
three properties listed above will be called a \textit{Hopf algebra}.
In the case of $\sO(G)$, the antihomomorphism property of~$S$ gets
hidden by the commutativity of the algebra.

\medskip

Let us see an example of a group with an extra structure.

\begin{example} 
\label{eg:SL2C}
The \textit{special linear group} $SL_2(\bC)$ is an algebraic variety
as well.%
\footnote{With a little more elaboration, the same will hold for
$SL_n(\bC)$.}
This is the group of all invertible $2 \x 2$ complex matrices of
determinant~$1$, with its usual matrix multiplication. We can write
\[
\twobytwodet{a}{b}{c}{d} = ad - bc = 1,
\]
and consider its (commutative) \textit{coordinate algebra} 
\[
SL(2) = \sO(SL_2(\bC)) := \bC[a,b,c,d]/(ad - bc -1),
\]
with coproduct $\Dl \: SL(2) \to SL(2) \ox SL(2)$ written in the form
\[
\Dl \twobytwo{a}{b}{c}{d}
= \twobytwo{a}{b}{c}{d} \ox  \twobytwo{a}{b}{c}{d}
\equiv \twobytwo{a \ox a + b \ox c}{a \ox b + b \ox d}
{c \ox a + d \ox c}{c \ox b + d \ox d}.
\]
Its counit and antipode are
\[
\eps \twobytwo{a}{b}{c}{d} = \twobytwo{1}{0}{0}{1}, \qquad
S \twobytwo{a}{b}{c}{d} = \twobytwo{a}{b}{c}{d}^{-1}
= \twobytwo{d}{-b}{-c}{a}.
\]
In this example is easier to see that $S$ is in general an
antihomomorphism.

All these relations are compact versions of
$\Dl(a) = a \ox a + b \ox c$, $\eps(a) = 1$, $S(a) = d$, and so on.
This algebra is not cocommutative, meaning $\tau \circ \Dl \neq \Dl$,
where $\tau(x \ox y) := y \ox x$.
\end{example}

\begin{example} 
\label{eg:SLq2}
A noncommutative deformation of $SL(2)$ is given by the algebra
$SL_q(2)$, with $q \in \bC \setminus \{0\}$. It is the algebra with
four generators $a,b,c,d$ subject to the relations:
\[
ba = q ab,  \quad  ca = q ac,  \quad  db = q bd,  \quad  dc = q cd,
\]
\[
bc = cb,  \quad  da = ad + (q - q^{-1}) bc,  \quad  ad - q^{-1}bc = 1.
\]
With $q = 1$ we recover $SL(2)$ which is evidently non-isomorphic to
any other $SL_q(2)$, because one is commutative and the other is not.
For a motivation on how to select these relations, see \cite{Manin,
KlimykSchmudgen, KasselBook}.

The algebra $SL_q(2)$ becomes a Hopf algebra with the same
comultiplication and counit of $SL(2)$, and with the antipode
\[
S_q \twobytwo{a}{b}{c}{d} = \twobytwo{d}{-qb}{-q^{-1}c}{a},
\word{which satisfies}
S_q \twobytwo{a}{b}{c}{d} \twobytwo{a}{b}{c}{d}
= \twobytwo{1}{0}{0}{1}. 
\]
Note that $SL_q(2)$ is neither commutative nor cocommutative. Also,
observe that
\[
S_q^n \twobytwo{a}{b}{c}{d}
= \twobytwo{\half(1 + (-1)^n)a + \half(1 - (-1)^n)d}
{(-q)^n b}{(-q^{-1})^n c}{\half(1 - (-1)^n)a + \half(1 + (-1)^n)d}.
\]
In the even case, it follows that
\[
S_q^{2n} \twobytwo{a}{b}{c}{d} = \twobytwo{q^n}{0}{0}{q^{-n}}
\twobytwo{a}{b}{c}{d} \twobytwo{q^{-n}}{0}{0}{q^n},
\]
and in the odd case
\[
S_q^{2n+1} \twobytwo{a}{b}{c}{d} = \twobytwo{q^n}{0}{0}{q^{-n}}
\twobytwo{d}{-qb}{-q^{-1}c}{a} \twobytwo{q^{-n}}{0}{0}{q^n}.
\eqno \qef
\]
\hideqef
\end{example}

In Section~\ref{sec:Qgroups-HG-ext} we will come back to $SL_q(2)$ 
as an example of a \textit{quantum group}.

\medskip

The last motivational example that we want to mention extends the
action of a group on a space to the noncommutative interpretation we
just introduced.

\begin{example} 
\label{eg:action-on-SL2C}
The special linear group $SL_2(\bC)$ acts on the vector space $\bC^2$
by matrix multiplication, $\al \: SL_2(\bC) \x \bC^2 \to \bC^2$.
Because $\al$ is an action, it satisfies
\[
\al(\al \ox \id_{SL_2(\bC)}) = \al(\mu \ox \id_{\bC^2})
: SL_2(\bC) \ox SL_2(\bC) \ox \bC^2 \to \bC^2, 
\]
and $\al\bigl( 1_{SL_2(\bC)} \ox \id_{\bC^2} \bigr)
= \id_{\bC^2} \in \End(\bC^2)$. This action produces an algebra
morphism
\[
\al^* \: \sO(\bC^2) \equiv \bC[X_1,X_2]
\to \sO(SL_2(\bC) \x \bC^2) \equiv SL(2) \ox \bC[X_1,X_2]
\]
that satisfies
\begin{align*}
(\al^* \ox \id_{SL(2)}) \al^* &= (\Dl \ox \id_{\bC[X_1,X_2]}) \al^*
: \bC[X_1,X_2] \to SL(2) \ox SL(2) \ox \bC[X_1,X_2],
\\
(\eps \ox \id_{\bC^2})\al^* &= 1_\bC \ox \id_{\bC[X_1,X_2]}
\equiv \id_{\bC[X_1,X_2]} \in \End(\bC[X_1,X_2]),
\end{align*}
with $\bC[X_1,X_2] \ox \bC \equiv \bC[X_1,X_2]$. We say that $\al^*$
is a \textit{left coaction} of $SL(2)$ on $\bC[X_1,X_2]$. Actually,
\[
\binom{\al^*(X_1)}{\al^*(X_2)} = \al^* \binom{X_1}{X_2}
= \twobytwo{a}{b}{c}{d} \ox \binom{X_1}{X_2}
= \binom{a \ox X_1 + b \ox X_2}{c \ox X_1 + d \ox X_2},
\]
meaning that $\al^*(X_1) = a \ox X_1 + b \ox X_2$ and 
$\al^*(X_2) = c \ox X_2 + d \ox X_2$.

Of particular interest will be the \textit{coinvariant subalgebra} 
$\bC[X_1,X_2]^{\co SL(2)}$ defined by:
\[
\bC[X_1,X_2]^{\co SL(2)} := \set{p(X_1,X_2) \in \bC[X_1,X_2]
: \al^*(p(X_1,X_2)) = 1_{SL(2)} \ox p(X_1,X_2)}.
\]
Since on $\bC[X_1,X_2]$ there is a natural \textit{grading} given by
the order of the homogeneous polynomial expressions in $X_1,X_2$, it
follows that $\bC[X_1,X_2]^{\co SL(2)} \equiv \bC$, which comes from
the origin being the only point with a singleton orbit for the
action~$\al$. A hint for the calculations goes as follows. With
$p(X_1,X_2) = \sum_{i,j} a_{ij} X_1^i X_2^j$, a finite sum,
\[
\al^*\bigl( p(X_1,X_2) \bigr) 
= \sum_{i,j} a_{ij} \al^*(X_1)^i \al^*(X_2)^j 
= \sum_{i,j} a_{ij} (a \ox X_1 + b \ox X_2)^i 
(c \ox X_1 + d \ox X_2)^j.
\]
On the right-hand side $X_1$ and $X_2$ commute, but the generators
$a,b,c,d$ do not. Anyhow, using the grading by homogeneous polynomials
in $X_1,X_2$, one concludes, in the case $i+j = 1$, that
\[
(a_{10}\,a + a_{01}\,c) \ox X_1 + (a_{10}\,b + a_{01}\,d) \ox X_2
= a_{10}\,1_{SL(2)} \ox X_1 + a_{01}\,1_{SL(2)} X_2,
\]
for an element $p(X_i,X_2) \in \bC[X_1,X_2]^{\co SL(2)}$. This is
equivalent to the $2 \x 2$ homogeneous linear system
$a_{10}(a - 1) + a_{01}\,c = 0$ and
$a_{10}\,b + a_{01}(d - 1) = 0$, which has $a_{10} = 0 = a_{01}$ as a
unique solution, because of the linear independence of the generators.
Evidently, computations are much more involved for greater degrees
$i + j$.

With a moderate effort, it is possible to provide the quantum plane
$\bC_q[X_1,X_2]$ with a coaction from $SL_q(2)$, using the very same
explicit formula for~$\al^*$. See once again
\cite{KlimykSchmudgen, KasselBook}.
\end{example}


\subsection{A possibly non-existing map} 
\label{ssc:non-existing-map}

Referring to an action of a group $G$ on a space $X$, in
\cite{BaumHajac} one reads that the action is free if and only if the
map $F \: X \x G \to X \x_{X/G} X : (x,g) \mapsto (x,x\.g)$ is a
homeomorphism, and that this is equivalent to the assertion that the
$*$-homomorphism $F^* \: C(X \x_{X/G} X) \to C(X \x G)$ obtained from
the above map by $F^*(f) = f \circ F$ is an isomorphism. The so-called
\textit{canonical map} $F$ is always surjective, so that the
$*$-homomorphism $F^*$ is always injective. For that, \cite{BaumHajac}
argues (using density and $C^*$-algebra arguments) that the
surjectivity of $F^*$ implies that the action is free. As a result, it
is a corollary that
$F \: X \x G \to X \x_{X/G} X : (x,g) \mapsto (x,x\.g)$ being a
homeomorphism is equivalent to the claim that 
$F^* \: C(X \x_{X/G} X) \to C(X \x G)$ is an isomorphism, where
attention is paid to the injectivity of~$F^*$. This is a very nice
example on how the space $X$ gets classified from its associated
algebra of functions, and vice versa.

Let us now try to understand the reach of the following statement:
``$f \: X \to Y$ is a bijection between spaces (of whatever kind) is
equivalent to the map $f^* \: A(Y) \to A(X)$ being an isomorphism of
algebras''. Let $f \: X \to Y$ be a map between spaces (for which we
do not yet specify a category). Over $X$ and $Y$, consider some sort
of function algebras $A(X)$ and $A(Y)$ -- e.g., $C(X)$ or
$C^\infty(X)$, etc. Depending on the properties of $X$ and $Y$, these
$A(X)$ and $A(Y)$ could be algebras, $*$-algebras, Hopf algebras,
$C^*$-algebras, and so forth. Define $f^* \: A(Y) \to A(X)$ by
$f^*(h) := h\circ f$.

If $f$ is bijective, then it follows that $f^*$ is bijective, too.
Indeed, if $g \in A(X)$, then $g = (g \circ f^{-1}) \circ f$ with
$g \circ f^{-1} \in A(Y)$, meaning that $f^*$ is surjective. Also, if
$h_1 \circ f = f^*(h_1) = f^*(h_2) = h_2 \circ f$, then
$h_1 = h_2$ in $A(Y)$, saying that $f^*$ is injective.

Conversely, that is, would $f^*$ being a bijection imply that $f$ is a
bijection? The answer to that is the procedure we posited before to
deduce from the algebra $A(X)$ -- then denoted $\sO(X)$ -- the space
$X$, up a bijective equivalence. Recall that a driving idea in
noncommutative geometry is to replace a space $X$ by an algebra of
functions $A(X)$, which in turn may be replaced by a possibly
noncommutative algebra~$A'$. This new algebra $A'$ consists of
``functions over a possibly non-existing space'' $X'$, that will play
the role of $A' = A(X')$. In this way, having an algebra map
$F \: A \to B$, what matters is to regard it as $F = f^*$ where
$f \: X \to Y$ is a (possibly non-existing) map between (possibly
non-existing) spaces $X$ and~$Y$.

Let us look at the possibility that $f^*$ bijective implies $f$ 
bijective. Assume $f^* \: A(Y) \to A(X)$ is a bijection. If
$f(x_1) = f(x_2)$ for $x_1,x_2 \in X$, then
$f^*(h)(x_1) = h(f(x_1)) = h(f(x_2)) = f^*(h)(x_2)$, for all
$h \in A(Y)$. Because $f^*$ is surjective, $g(x_1) = g(x_2)$ for all
$g \in A(X)$. If $A(X)$ has a separating property with respect to $X$,
it follows that $x_1 = x_2$, and so $f$ is injective. Remember that a
family $\sF$ of functions with domain $X$ is said to \textit{separate
the points} of $X$ if for any two distinct $x,y \in X$, there exists a
function $\phi \in \sF$ such that $\phi(x) \neq \phi(y)$.

\begin{remark}
The property of separating points occurs in a great many algebras. The
best known examples are Urysohn's lemma and the Hahn--Banach
separation theorem. The first states that a topological space $X$ is
normal if and only if, for any two nonempty closed disjoint subsets
$C$ and $D$ of $X$, there exists a continuous map $f \: X \to [0,1]$
such that $f(C) = \{0\}$ and $f(B) = \{1\}$. Thereby, the algebra
$C(X)$ of continuous functions on $X$ with real (or complex) values
separates points on~$X$. The second statement avers that, if $X$ is a
locally convex Hausdorff topological vector space over $\bR$ or~$\bC$,
then the continuous linear functionals on $X$ separate points.

The several versions of the Stone--Weierstrass theorem provide another
rich source of examples. The original version proved by Marshall H.
Stone states that a unital subalgebra of the algebra $C(X,\bR)$ of
real-valued functions on a compact Hausdorff space $X$ is dense in
$C(X,\bR)$, with the topology of uniform convergence, if and only if
it separates the points of~$X$. The complex version, better suited to
our work ahead, states that the complex unital $*$-algebra generated
by a separating subset of $C(X,\bC)$, is dense in $C(X,\bC)$. There is
also a version of the theorem for $X$ locally compact: a subalgebra
$A$ of the Banach algebra $C_0(X,\bR)$, of real-valued continuous
functions on $X$ that vanish at infinity, is uniformly dense in
$C_0(X,\bR)$ if and only if it separates points and vanishes nowhere
(not all of the elements of $A$ vanish simultaneously at some point).
Lastly, we mention Nachbin's theorem~\cite{Nachbin49}. On a
finite-dimensional smooth manifold $X$, if a subalgebra $A$ of the
algebra $C^\infty(X)$ of smooth functions separates the points of $X$
and also separates its tangent vectors: for each point $x \in X$ and
each vector $v \in T_x X$, there is an $f \in A$ such that
$df(x)(v) \neq 0$; then $A$ is dense in $C^\infty(X)$.
\end{remark}

Now let us discuss the surjectivity of~$f$. Let $y \in Y$ and assume
there is some $g_y \in A(Y)$ that can ``distinguish $y$ from the other
elements of~$Y$''. Then $h_y = f^*(g_y) = g_y \circ f \in A(X)$, and
now the bijectivity of~$f^*$ would mean that whatever property $g_y$
enjoys (with respect to~$Y$), $h_y$ enjoys also (with respect to~$X$).

\begin{remark}
One example of a situation in which a member of an algebra of
functions over a given space can ``distinguish a point from the other
elements'' is the Riesz representation theorem: for every continuous
linear functional $\phi \in H^*$ on a given Hilbert space $H$, there
is a unique element $x_\phi \in H$ such that
$\phi(y) = \duo{x_\phi}{y}$ for every $y \in H$. One step deeper in
this direction is given by the so-called \textit{dual systems} in
which two spaces mutually distinguish or separate points of each
other, of which the former is a particular case: let $H$ and $K$ be
two vector spaces over a field $k$ and let $b \: H \x K \to k$ be a
nondegenerate bilinear form: for all nonzero $x \in H$, there exists
$y \in K$ such that $b(x,y) \neq 0$; and, for all nonzero $y \in K$,
there is an $x \in H$ such that $b(x,y) \neq 0$.
\end{remark}


\section{Hopf--Galois extensions: preliminaries} 
\label{sec:HG-for-dummies}

In \cite{Bohm07} one reads: ``a Hopf--Galois extension (that is
faithfully flat) can be interpreted as a (dual version of a)
noncommutative principal bundle'', and in \cite{Tobolski}, referring
to a rough classification of compact principal bundles in
noncommutative algebraic geometry: ``In this setting, the role of a
topological group and a topological space are played by a Hopf algebra
and its unital comodule algebra respectively.'' We want to introduce
the theory of Hopf--Galois extensions and how these can be considered
as noncommutative algebraic analogues of a principal bundle.

To define what a Hopf--Galois extension is, one must first deal with
several concepts from algebras and coalgebras: actions and coactions,
Hopf algebras, algebra and coalgebra extensions by bialgebras,
balanced tensor products, canonical maps, arriving finally at the
definition of a Hopf--Galois extension.


\subsection{Algebras, coalgebras and bialgebras} 
\label{ssc:all-algebras}

We denote by~$k$ a commutative associative unital ring. A \textit{left
$k$-module} over the ring $k$ is an abelian group $(M,+)$ together
with an operation $k \x M \to M$ (just written by juxtaposition) such
that $r(x + y) = rx + ry$, $( r+ s)x = rx + sx$, $(rs)x = r(sx)$,
$1x = x$ for all $r,s \in k$, $x,y \in M$; here $1$ is the unit
of~$k$. Similarly one defines a \textit{right $k$-module}.%
\footnote{%
In the context of modules over a ring, special attention must be given
to torsion in tensor products.} 
Since $k$ is commutative, a left $k$-module can be regarded as a right
$k$-module and vice versa, so we just speak of a \textit{$k$-module}.

If $M$ is a left $k$-module and $N$ is a right $k$-module, their
tensor product $M \ox N$ (omitting the ornament subindex~$k$) is an
abelian group together with a bilinear map
$\ox \: M \x N \to M \ox N$ satisfying $\ox(mr,n) = \ox(m,rn)$, with a
universal property: for any abelian group $Z$ and any bilinear map
$\phi \: M \x N \to Z$, there is a unique additive map
$\tilde\phi \: M \ox N \to Z$ such that $\tilde\phi \circ \ox = \phi$.
This defines the tensor product uniquely up to isomorphism (although
existence requires a separate argument).

A \textit{$k$-algebra} is a $k$-module $A$ equipped with a $k$-linear
multiplication map $\mu \: A \ox A \to A$ and a $k$-linear unit map
$\eta \: k \to A$ satisfying the obvious conditions of associativity
and unitality:
\[
\mu(\mu \ox \id_A ) = \mu(\id_A \ox \mu) : A \ox  A \ox  A \to A;
\quad
\mu(\eta \ox \id_A) = \id_A = \mu(\id_A \ox \eta).
\]
Our algebras will all be unital algebras. We denote
$1 = 1_A = \eta(1_k) = \eta(1)$.

If $A$ is a $k$-algebra then $A \ox A$ is a $k$-algebra, too, with its
multiplication defined by
$(a_1 \ox a_2)(b_1 \ox b_2) = a_1b_1 \ox a_2b_2$, extended by
linearity. The tensor product of unital algebras is also a unital
algebra.

To each ``structure'' in use, one can associate a ``co-structure''.
To begin with, the associative multiplication and the unit on a
$k$-algebra $A$ can be pictured with commutative diagrams:
\[
\xymatrix{\ar@{}[dr] & k \ar[d]^\eta  \\
A \ox A \ar[r]^(.6)\mu & A}
\qquad
\xymatrix{\ar@{}[dr]
A \ox A \ox A \ar[d]_{\mu\ox \id_A} \ar[r]^(.6){\id_A\ox\mu} &
A \ox A \ar[d]^\mu \\
A \ox A \ar[r]^(.6){\mu} & A}
\qquad
\xymatrix{\ar@{}[dr]
k \ox A \isom A \ox k \ar[d]_{\eta\ox\id_A} \ar[r]^(.65){\id_A\ox\eta}
\ar[dr]^{\id_A} & A \ox A \ar[d]^\mu \\
A \ox A \ar[r]^\mu & A}
\]
with  the identifications
$A \ox A \ox A := (A \ox A) \ox A \isom A \ox (A \ox A)$ and 
$k \ox A \isom A \isom A \ox k$. ``Dualizing'' these diagrams, i.e.,
reversing the arrows, we obtain other commutative diagrams:
\[
\xymatrix{\ar@{}[dr] & k  \\
C \ox C & C \ar[l]_(.3) \Dl \ar[u]_\eps}
\quad
\xymatrix{\ar@{}[dr]
C \ox C \ox C & C \ox C \ar[l]_(.4){\id_C\ox\Dl} \\
C \ox C \ar[u]^{\Dl\ox\id_C} & C \ar[u]_\Dl \ar[l]_(.4)\Dl}
\qquad
\xymatrix{\ar@{}[dr]
k \ox C \isom C \ox k & C \ox C \ar[l]_(.35){\id_C\ox\eps} \\
C \ox C \ar[u]^{\eps\ox\id_C} &
C \ar[l]_\Dl \ar[u]_\Dl \ar[lu]_{\id_C} }
\]
again with $C \ox C \ox C := (C \ox C) \ox C \isom C \ox (C \ox C)$
and $k \ox C \isom C \isom C \ox k$. In this way, we arrive at the
definition of a $k$-coalgebra.

Namely: a \textit{$k$-coalgebra} is a $k$-module $C$ equipped with a
$k$-linear \textit{comultiplication} map $\Dl \: C \to C \ox C$ and a
$k$-linear \textit{counit} map $\eps \: C \to k$. satisfying the
conditions of \textit{coassociativity} and \textit{counitality}:
\[
(\id_C \ox \Dl) \circ \Dl = (\Dl \ox \id_C) \circ \Dl
: C \to C \ox C \ox  C, \quad
(\id_C \ox \eps) \circ \Dl = \id_C = (\eps \ox \id_C) \circ \Dl.
\]

\begin{example} 
\label{eg:trigoncoalg}
Out of the standard trigonometric identities
\[
\sin(x + y) = \sin x \cos y + \cos x \sin y,  \qquad
\cos(x + y) = \cos x \cos y - \sin x \sin y,
\]
and the values at the origin $\sin 0 = 0$ and $\cos 0 = 1$, one can,
by suppressing the variables, define a two-dimensional $k$-coalgebra
$C$, called the \textit{trigonometric coalgebra}
\cite{BrzezinskibyWitkowski} as follows: $C$ has a basis
$\{\sin, \cos\}$, with comultiplication
\[
\Dl(\sin) := \sin \ox \cos + \cos \ox \sin,  \qquad
\Dl(\cos) := \cos \ox \cos - \sin \ox \sin,
\]
and counit $\eps(\sin) = 0$, $\eps(\cos) = 1$, both extended by
linearity over $k$. Direct computations show they satisfy
coassociativity and counitality.
\end{example}

\begin{remark}
Sweedler's notation \cite{Sweedler} is commonly used to represent the
action of $\Dl$ as a finite sum $\Dl(c) = c_{(1)}\ox c_{(2)}$, with
implicit summation understood. The counit property becomes
\[
\eps(c_{(1)}) c_{(2)} = c = c_{(1)} \eps(c_{(2)})
\word{for all} c \in C.
\]
Coassociativity entails
\[
c_{(1)} \ox c_{(2)(1)} \ox c_{(2)(2)} 
= c_{(1)(1)} \ox c_{(1)(2)} \ox c_{(2)},
\]
This common value 
$\Dl^2(c) := (\id_C \ox \Dl)\Dl(c) = (\Dl \ox \id_C)\Dl(c)$ is
rewritten as $c_{(1)} \ox c_{(2)} \ox c_{(3)}$.

\medskip

If $f \: C \x C \xyx C \to A$ is any $k$-multilinear map, and if 
$\bar f \: C \ox C \oxyox C \to A$ is the induced $k$-linear map on
the tensor product, then Sweedler's notation states that
\[
\bar f(c_{(1)} \ox c_{(2)} \oxyox c_{(n)}) 
= \bar f(\Dl^{n-1}(c)) = f(c_{(1)},c_{(2)},\dots,c_{(n)}).
\] 
The map $\tau \: C \ox C \to C \ox C$, given by linearly extending 
$\tau(c \ox c') = c' \ox c$, is the induced linear map 
corresponding to the $k$-bilinear map 
$t \: C\x C \to C \ox C : (c,c') \mapsto c' \ox c$. The map
$\tau\Dl \: C \to C \ox C$ satisfies
$\tau\Dl(c) = \tau (c_{(1)} \ox c_{(2)}) = t(c_{(1)},c_{(2)})
= c_{(2)} \ox c_{(1)}$, illustrating this convention.
\end{remark}

\begin{example} 
\label{eg:algebra-times-coalgebra}
Let $A$ be an algebra and $C$ be a coalgebra. Since both are
$k$-modules, we can consider their tensor product $A \ox C$.

We can define a coalgebra structure on $A \ox C$ by trivializing the
role of $A$ and defining 
$\Dl_{A\ox C} \: A \ox C \to A \ox C \ox A \ox C$ by
$a \ox c \mapsto a \ox c_{(1)} \ox a \ox c_{(2)}$, and
$\eps(a \ox c) := \eps(c)$, both extended by linearity. Indeed,
\begin{align*}
(\id_{A\ox C} \ox \Dl_{A\ox C}) \circ \Dl_{A\ox C} (a \ox c) 
&= (\id_{A\ox C} \ox \Dl_{A\ox C})(a \ox c_{(1)} \ox a \ox c_{(2)})
\\
&= a \ox c_{(1)} \ox a \ox c_{(2)} \ox a \ox c_{(3)}
\\
&= (\Dl_{A\ox C} \ox \id_{A\ox C})(a \ox c_{(1)} \ox a \ox c_{(2)})
\\
&= (\Dl_{A\ox C} \ox \id_{A\ox C}) \circ \Dl_{A\ox C} (a \ox c),
\\
\shortintertext{and}
(\id_{A\ox C} \ox \eps_{A\ox C}) \circ \Dl(a \ox c) 
&= (\id_{A\ox C} \ox \eps_{A\ox C}) (a \ox c_{(1)} \ox a \ox c_{(2)})  
\\
&= a \ox c_{(1)} \eps(c_{(2)}) = \id_{A\ox C} (a \ox c)
= a \eps(c_{(1)}) \ox c_{(2)}
\\
& = (\eps_{A\ox C} \ox \id_{A\ox C}) (a \ox c_{(1)} \ox a \ox c_{(2)})
\\
&= (\eps_{A\ox C} \ox \id_{A\ox C}) \circ \Dl (a \ox c).
\tag*{\qef}
\end{align*}
\hideqef
\end{example}

Once we have well-defined objects, it is natural to
consider morphisms relating two of them, according to their
structures. A \textit{coalgebra homomorphism} is a map
$f \: C \to C'$ between two coalgebras $(C,\Dl,\eps)$ and
$(C',\Dl',\eps')$, such that $\eps' \circ f = \eps$ and
$(f \ox f) \circ \Dl = \Dl' \circ f$, with $f \ox f$ acting on the
coalgebra tensor product $C \ox C$. In diagrams:
\[
\xymatrix{\ar@{}[dr]
C \ar[dr]_\eps \ar[rr]^f  & & C' \ar[dl]^{\eps'} \\
& k & }
\hspace*{6em}
\xymatrix{\ar@{}[dr]
C \ar[r]^f \ar[d]_\Dl & C' \ar[d]^{\Dl'}  \\
C \ox  C \ar[r]^{f\ox f} & C'\ox  C' }
\]
These two diagrams are dual to those for an algebra homomorphism
$\phi \: A \to A'$,
\[
\xymatrix{\ar@{}[dr]
A \ar[rr]^\phi & & A'   \\
& k \ar[ul]^\eta \ar[ur]_{\eta'} & }
\hspace*{6em}
\xymatrix{\ar@{}[dr]
A \ar[r]^\phi  & A'  \\
A \ox A \ar[r]^{\phi\ox \phi} \ar[u]^\mu & A'\ox  A' \ar[u]_{\mu'} }
\]
where $\phi \circ \eta = \eta'$ and 
$\phi \circ \mu = \mu' \circ (\phi \ox \phi)$.

\medskip

Many interesting properties of the structures we shall study rely on
the harmony of an algebra and a coalgebra, coalescing in a single
object called a bialgebra.

A \textit{bialgebra} is a $k$-module $H$, together with both a
$k$-algebra structure $(H,\mu,\eta)$ and a $k$-coalgebra structure
$(H,\Dl,\eps)$ in such a way that the counit $\eps$ and the coproduct
$\Dl$ are $k$-algebra homomorphisms:
\[
\eps(hh') = \eps(h) \eps(h'),  \quad  \eps(1) = 1;
\qquad
\Dl(hh') = \Dl(h)\Dl(h'),  \quad  \Dl(1) = 1 \ox 1.
\]
In Sweedler's notation, the third relation is written as
$(hh')_{(1)} \ox (hh')_{(2)} 
= h_{(1)} {h'}_{\!(1)} \ox h_{(2)} {h'}_{\!(2)}$. These are the
\textit{compatibility conditions} for the algebra and the coalgebra
structure on~$H$. In diagrams:
\[
\xymatrix{\ar@{}[dr]
H \ox H \ar[r]^(.6)\mu \ar[d]_{\eps\ox\eps} & H \ar[d]^\eps  \\
k \isom k \ox k  \ar[r]^(.65){\cdot} & k }
\qquad\qquad
\xymatrix{\ar@{}[dr]
H \ox H \ar[r]^\mu \ar[d]_{\Dl \ox \Dl}  & H \ar[d]^\Dl  \\
H \ox H \ox H \ox H \ar[r]^(.65){\mu_\ox} & H \ox H }
\]
where $\mu_\ox = \mu \ox \mu \circ \switch{2}{3}$ as before.

A \textit{homomorphism of bialgebras} is at once an algebra and a
coalgebra homomorphism.%
\footnote{%
In subsection~\ref{ssc:Hopf-algebras} we shall see how the vector
space $\Hom(C,A)$ of linear maps from a coalgebra $C$ to an algebra
$A$ is actually an algebra under the operation of convolution.}

The compatibility conditions are equivalent to the requirements that
the unit $\eta$ and the product $\mu$ are $k$-coalgebra homomorphisms:
\begin{gather*}
\mu(1 \ox 1) = 1,  \quad  \eta(1) = 1;  \qquad
\Dl(c_1 c_2) = \Dl(c_1)\Dl(c_2),  \quad
\Dl \eta(t) = \eta(t) \ox 1 = 1 \ox \eta(t),
\\
\xymatrix{\ar@{}[dr]
H \ox  H & H \ar[l]_(.35)\Dl   \\
k \ox  k \isom k \ar[u]^{\eta\ox\eta} & k \ar[u]_\eta \ar[l] }
\qquad\qquad
\xymatrix{\ar@{}[dr]
H \ox H  & H \ar[l]_\Dl   \\
H \ox H \ox H \ox H \ar[u]^{\mu_\ox} &
H \ox H \ar[l]_(.35){\Dl \ox \Dl} \ar[u]_\mu }
\end{gather*}
These diagrams are simply the reverses of the diagrams for the
compatibility conditions above.

\begin{example} 
\label{eg:bialgebra}
Consider the polynomial algebra $F[X]$ over a field $F$ with the
product defined on monomials by $\mu(X^k \ox X^l) := X^{k+l}$,
extended by linearity, and $\eta(1) = 1$. It is a coalgebra under the
coproduct $\Dl(X^n) := X^n \ox X^n$ and counit $\eps(X^n) := 1$ for
every $n \geq 0$, both extended by linearity. It is straightforward to
verify that $(F[X],\mu,\eta,\Dl,\eps)$ is a bialgebra.
\end{example}

\begin{example} 
\label{eg:Laurent-bialgebra}
The \textit{Laurent polynomials} algebra $F[X,X^{-1}]$ over a field
$F$ is obtained from the product defined on monomials by
$\mu(X^k \ox X^l) := X^{k+l}$ (now with $k,l \in \bZ$) and extended by
linearity, and $\eta(1) = 1$. The coproduct is given on generators by
$\Dl(X) := X \ox X$ and $\Dl(X^{-1}) := X^{-1} \ox X^{-1}$. On
extending multiplicatively to the whole $F[X,X^{-1}]$ one obtains
$\Dl(X^n) = X^n \ox X^n$, for any $n \in \bZ$. The counit is
$\eps(X^n) := 1$, also extended by linearity; all compatibility
conditions are trivial. As a result,
$(F[X,X^{-1}],\mu,\eta,\Dl,\eps)$ is a bialgebra.
\end{example}

\begin{example} 
\label{eg:bialgebra-bin}
Consider the polynomial algebra $F_b[X]$ over a field $F$ with the
product again defined on monomials by $\mu(X^k \ox X^l) := X^{k+l}$
(now with $k,l \in \bN$), and again $\eta(1) = 1$. It is a coalgebra,
too (called the \textit{binomial coalgebra}) under the coproduct
\[
\Dl(X^n) := \sum_{k=0}^n \binom{n}{k} X^k \ox X^{n-k}
\]
and counit $\eps(X^n) := 0$ for $n \neq 0$, $\eps(1) := 1$, also
extended by linearity. Here all except the third compatibility
conditions are immediate. As for this third one:
\begin{align*}
\Dl(X^m) \Dl(X^n) &= \sum_{j=0}^m \sum_{k=0}^n
\binom{m}{j} \binom{n}{k} X^j X^k \ox X^{m-j} X^{n-k}
\\
&= \sum_{r=0}^{m+n} \sum_{j+k=r}
\binom{m}{j} \binom{n}{k} X^{j+k} \ox X^{m+n-j-k}
\\
&= \sum_{r=0}^{m+n} \binom{m+n}{r} X^r \ox X^{m+n-r} = \Dl(X^{m+n}),
\end{align*}
thanks to Vandermonde's convolution identity. Thus,
$(F_b[X],\mu,\eta,\Dl,\eps)$ is a bialgebra. Note that $F_b[X]$ and
$F[X]$ have different coalgebra structures.
\end{example}

\begin{example} 
\label{eg:alg-grupo} 
For a finite group $G$ and a field $F$, the \textit{group algebra}
$F[G]$ is the vector space over $F$ of dimension $|G|$ whose elements
are formal sums $\sum_{g\in G} a_g g$, with $a_g \in F$. Its product
is given by
\[
\biggl( \sum_{g\in G}  a_g g \biggr)\,
\biggl( \sum_{h\in G}  b_h h \biggr)
:= \sum_{k\in G} \biggl( \sum_{gh=k} a_g b_h \biggr) k.
\]
Equivalently, $\mu(g \ox h) := gh$ on the usual basis of
$F[G] \ox F[G]$. In this example, $\eta(1_F) = 1_G$. The coproduct
$\Dl$ and counit on $F[G]$ are defined on the basic elements $g \in G$
by
\begin{equation}
\Dl(g) := g \ox g,  \quad  \eps(g) := 1_F.
\label{eq:group-like} 
\end{equation}
Since $\Dl(gh) = gh \ox gh = (g \ox g)(h \ox h) = \Dl(g)\Dl(h)$ and
$\eps(gh) = 1_F = \eps(g) \eps(h)$, it follows that $F[G]$ is a
bialgebra.
\end{example}

\begin{remark}
In any bialgebra $H$ over a field $F$, any $g \in H$ satisfying
\eqref{eq:group-like} is called a \textit{grouplike} element of~$H$.
\end{remark}

If $G$ is not abelian, $F[G]$ is not commutative, thus the relation
$\mu(a \ox b) = \mu(b \ox a)$ does not hold for every $a,b \in A$.
This relation can be written as $\mu \circ \tau = \mu$. Dually, a
coalgebra $C$ is \textit{cocommutative} if and only if
$\tau \circ \Dl = \Dl \: C \to C \ox C$. In Sweedler's notation,
\[
\tsum a_{(1)} \ox a_{(2)} = \tsum a_{(2)} \ox a_{(1)} 
\word{for every} a \in C.
\]
Note that $F[G]$ is cocommutative.

\begin{example} 
\label{eg:bialg-func} 
Once again, let $G$ be a finite group and $F$ be a field. The
\textit{bialgebra of functions} $\sO(G)$ (also denoted $F^G$) is the
algebra of functions $f \: G \to F$ with pointwise operations
\[
(f + l)(g) := f(g) + l(g),  \qquad  (fl)(g) := f(g)\,l(g).
\]
This is an $F$-vector space $\sO(G) \isom F^{|G|}$ of dimension~$|G|$
with basis $\set{e_g : g \in G}$ where, using Kronecker's delta, 
$e_g(h) := \dl_{gh}$ and $m(e_g \ox e_h) = \dl_{gh}\,e_g$. Here,
$\eta(1)$ equals the constant function $g \mapsto 1$ for $g \in G$.

In this case, one may identify the tensor product $\sO(G) \ox \sO(G)$
with the algebra of functions $\sO(G \x G) \isom F^{|G|^2}$, as
follows. If $f,l \in \sO(G)$, then $f \ox l$ is identified with the
two-variable function $(g,h) \mapsto f(g)\,l(h)$. The map
$f \mapsto f(g) \in F$ is then a linear functional on~$\sO(G)$.

The coproduct $\Dl \: \sO(G) \to \sO(G \x G)$ is given by 
$\Dl(f)(g,h) := f(gh)$. The coassociativity of $\sO(G)$ is a
consequence of the associativity of~$G$:
\begin{align*}
(\Dl \ox \id)(\Dl(f))(g,h,k) &= \Dl(f)(gh,k) = f((gh)k) = f(g(hk))
\\
&= \Dl(f)(g,hk) = (\id \ox \Dl)(\Dl(f))(g,h,k),
\end{align*}
or $\Dl^2(f)(g,h,k) = f(ghk)$, for short. The counit in $\sO(G)$ is
given by $\eps(f) := f(1_G)$. If the group $G$ is not abelian, this
bialgebra is commutative but not cocommutative.

As a particular case, $G = \bZ_2 = \{\pm1\}$ produces a basis
$\{e_1,e_{-1}\}$ for $\sO(\bZ_2)$ with multiplication $e_1 e_1 = e_1$,
$e_1 e_{-1} = e_{-1}e_1 = 0$, $e_{-1}e_{-1} = e_{-1}$, and
comultiplication given by
$\Dl(e_1) =  e_1 \ox e_1 + e_{-1} \ox e_{-1}$ and 
$\Dl(e_{-1}) =  e_1\ox e_{-1} + e_{-1} \ox e_1$.
\end{example}

Originally introduced as generalizations of Galois extensions of
commutative rings by groups \cite{ChaseSweedler, KreimerTakeuchi,
GreitherPareigis}, Hopf--Galois extensions now play an important role
beyond its algebraic origin, in noncommutative geometry (see
subsection~\ref{ssc:tracking-the-idea} for a list of references).
Towards a definition of Hopf--Galois extensions, we require some more
concepts.


\subsection{Actions and coactions} 
\label{ssc:act-and-coact}

Recall that a left action of a group $G$ on a set $X$ is a map
$\al \: G \x X \to X$, such that $\al(g,\al(h,x)) = \al(gh,x)$ and
$\al(1_G,x) = x$ for all $g,h \in G$ and all $x \in X$. Similarly,
\textit{left action} of a $k$-algebra $A$ on $k$-module $M$ is a
$k$-linear map $\rho \: A \ox M \to M$ satisfying:
\begin{enumerate}
\item 
$\rho(\id_A \ox \rho) = \rho(\mu \ox \id_M) : A \ox A \ox M \to M$,
\item 
$\rho(\eta \ox \id_M) = \id_M \in \End(M)$,
\end{enumerate}
i.e., $\rho(a \ox \rho(a' \ox m)) = \rho(a a' \ox m)$ and
$\rho(1_A \ox m) = m$. In diagrams:
\[
\xymatrix{\ar@{}[dr]
A \ox A \ox M \ar[d]_{\mu\ox\id_M} \ar[r]^(.6){\id_A\ox\rho} &
A \ox M \ar[d]^\rho  \\
A \ox M \ar[r]_(.6)\rho  & M }
\hspace*{6em}
\xymatrix{\ar@{}[dr]
k \ox M \ar[rr]^{\eta\ox\id_M} \ar[dr]_\cdot & & 
A \ox M \ar[dl]^{\rho} \\
& M & }
\]
Likewise, one can define a \textit{right action} $M \ox A \to M$. If
there is a left [respectively, right] action of $A$ on $M$, we say
that $M$ is a \textit{left} [resp., \textit{right}] \textit{module}
of~$A$. Any algebra is evidently a module over itself, the action
being given by the multiplication map.

As before, we ``dualize'' the above diagrams to define the concept of
coaction. A \textit{right coaction} of a $k$-coalgebra $C$ on a
$k$-module $N$ is a $k$-linear map $\dl \: N \to N \ox C$ satisfying:
\begin{enumerate}
\item 
$(\dl \ox \id_C) \dl = (\id_N \ox \Dl) \dl : N \to N \ox C \ox C$;
\item 
$(\id_N \ox \eps) \dl = \id_N \ox 1_k \isom \id_N \in \End(N)$ with
$N \ox k \isom N$.
\end{enumerate}
In diagrams:
\[
\xymatrix{\ar@{}[dr]
N \ox C \ox C & N \ox C \ar[l]_(.4){\dl\ox\id_C} \\
N \ox C \ar[u]^{\id_N\ox\Dl} & N \ar[l]^(.4)\dl \ar[u]_\dl }
\hspace*{6em}
\xymatrix{\ar@{}[dr]
N \ox k & & N \ox C \ar[ll]_{\id_N\ox\eps} \\
& N \ar[ul]^{\id_N \ox 1_k} \ar[ur]_{\dl} & }
\]
Similarly, one can define a \textit{left coaction} of $C$ on~$N$. If
there is a right [respectively, left] coaction of $C$ on $N$, we say
that $N$ is a \textit{right} [resp., \textit{left}] \textit{comodule}
of~$C$. Every coalgebra is evidently a comodule over itself, the
coaction being given by the comultiplication map.

If $\dl(n) =: n_{(0)} \ox n_{(1)}$, with $n_{(0)}\in N$ and
$n_{(j)} \in C$ for $j > 0$, we obtain
\[
n_{(0)(0)} \ox n_{(0)(1)} \ox n_{(1)} 
= n_{(0)} \ox n_{(1)(1)} \ox n_{(1)(2)}
=: n_{(0)} \ox n_{(1)} \ox n_{(2)}
\]
and $n_{(0)} \eps(n_{(1)}) = n$. Sweedler's notation for a left
coaction will keep $n_{(0)} \in N$ but employ negative indices to
label elements of~$C$.

If $N$ and $N'$ are two right $C$-comodules with coactions
$\dl \: N \to N \ox C$ and $\dl' \: N' \to N' \ox C$, a $C$-comodule
morphism $\phi \: N \to N'$ is a $k$-module morphism that further
satisfies $\dl' \circ \phi = (\phi \ox \id_C) \circ \dl$.

\medskip

A homomorphism of unital algebras $f \: A \to B$ satisfies
$f(1_A) = 1_B$ and $f(aa') = f(a)f(a')$, for $a,a' \in A$. If
$H$ is a $k$-bialgebra and $A$ is a $k$-algebra with a left action
$\rho \: H \ox A \to A$ which is a unital algebra homomorphism,
that is, $\rho(1_H \ox 1_A) = \eps(1_H) 1_A = 1_A$ and
$\rho((h \ox a) \ox (h' \ox a')) = \rho(h \ox a) \rho(h' \ox a')$, we
say that $\rho$ is a \textit{left Hopf action} of $H$ on~$A$ and call
$A$ a \textit{left $H$-module algebra} provided that
$\rho(h \ox a a')  = \rho(h_{(1)} \ox a) \rho(h_{(2)} \ox a')$ and
$\rho(h \ox 1_A) = \eps(h) 1_A$, for all $h \in H$; $a,b \in A$. In
diagrams:
\[
\xymatrix{\ar@{}[dr]
H \ox A \ox A \ar[d]_{\id_H\ox\mu} \ar[r]  &
H \ox H \ox A \ox A \ar[r]^(.5){{}_2\frown_3} &
H \ox A \ox H \ox A \ar[d]^{\rho \ox\rho}  \\
H \ox A \ar[r]_(.6)\rho & A & A \ox A \ar[l]^\mu }
\quad
\xymatrix{\ar@{}[dr]
H \ox k \ar[d]_{\id_H\ox \eta} \ar[r]^{\eps\ox \eta} &
k \ox A \ar[d]^{\cdot}  \\
H \ox A \ar[r]_\rho & A }
\]
where the map on the top left is $\Dl \ox \id_A \ox \id_A$. In a
similar fashion, it is possible to define \textit{right Hopf actions}
and \textit{right $H$-module algebras}.

\begin{remark}
Two special kinds of element for $H$-module algebras should be
mentioned. If $h$ is \textit{grouplike}, then
\[
\rho(h \ox a a') = \rho(h \ox a) \rho(h \ox a'), \qquad
\rho(h \ox 1_A) = 1_A, 
\]
so $a \mapsto \rho(h \ox a)$ is an algebra \textit{automorphism}
of~$A$.

If $h$ is \textit{primitive}, that is to say,
$\Dl h = h \ox 1 + 1 \ox h$ and $\eps(h) = 0$ (see
Example~\ref{eg:enveloping-Lie-algebra}), then
\[
\rho(h \ox a a') = \rho(h \ox a) a' + a \rho(h \ox a'), \qquad
\rho(h \ox 1_A) = 0, 
\]
thus $a \mapsto \rho(h \ox a)$ is a \textit{derivation} of the
algebra~$A$.
\end{remark}

If $\rho \: H \ox A \to A$ is an algebra action of the bialgebra $H$
over the algebra $A$ (i.e., an action of the algebra $H$ over the
module structure of the algebra $A$ which is an algebra homomorphism)
the \textit{invariant subspace} $A^H$ of $A$ is defined as
\[
A^H := \set{a \in A : \rho(h \ox a) = \eps(h)a, \mot{for all} h\in H}.
\]
In case $\rho$ is a Hopf action, $A^H$ is  a subalgebra of $A$. We
call it the \textit{invariant subalgebra}%
\footnote{%
In the case that $G$ is a finite group and $H = F[G]$, since each
$\eps(g) = 1$ it is clear that $A^{F[G]} = A^G$ is just the
fixed-point subalgebra of $A$ under a linear group action of $G$
on~$A$. See Example~\ref{eg:alg-grupo}.}
of~$A$. Indeed, since $\eps(h) \in k$ for all $h \in H$,
\begin{align*}
\rho(h \ox a a') &= \rho(h_{(1)} \ox a) \rho(h_{(2)} \ox a') 
\\
&= \eps(h_{(1)}) \eps(h_{(2)}) a a' = \eps(\eps(h_{(1)})h_{(2)}) a a'
= \eps(h) a a'
\end{align*}
for all $a,a' \in A$ and $h \in H$. One might indeed think of the
above as a motivation to define Hopf actions.

If $H$ is a $k$-bialgebra and $C$ is a $k$-coalgebra, the coalgebra
$C \ox H$ has comultiplication given by
$\Dl_{C\ox H}(c \ox h) = c_{(1)} \ox h_{(1)} \ox c_{(2)} \ox h_{(2)}$,
and $\eps_{C\ox H}(c \ox h) = \eps_C(c) \eps_H(h)$. If a coaction
$\dl \: C \to C \ox H$ is a coalgebra homomorphism, that is,
$\eps_{C\ox H}\,\dl = \eps_C$ and
$(\dl \ox \dl) \Dl_C = \Dl_{C\ox H}\,\dl$, we say that $\dl$ is a
\textit{coalgebra coaction}, and that $C$ is a \textit{(right)
$H$-comodule coalgebra}. In commutative diagrams:
\[
\xymatrix{\ar@{}[dr]
C \ox C \ox H  & C \ox C \ox H \ox H \ar[l] &
C \ox H \ox C \ox H \ar[l]_(.47){{}_2\frown_3} \\
C \ox H \ar[u]^{\Dl\ox\id_H} & C \ar[l]_\dl \ar[r]^{\Dl_C} & 
C \ox C \ar[u]^{\dl\ox\dl} }
\quad
\xymatrix{\ar@{}[dr]
k \ox H & C \ox k \ar[l]_{\eps\ox \eta} \\
C \ox H \ar[u]^{\eps\ox\id_H} & C \ar[l]_(.4)\dl \ar[u]_{\cdot} }
\]
where the map at the top left corner is $\id_C \ox \id_C \ox \mu$. In
a similar fashion, it is possible to define \textit{left Hopf
coactions} and \textit{left $H$-comodule algebras}.

If $\dl \: A \to A \ox H$ is a coaction of the bialgebra $H$ on the
algebra $A$, the \textit{coinvariant subalgebra} $A^{\co H}$ of $A$ is
defined by:
\[
A^{\co H} := \set{a \in A : \dl(a) = a \ox 1_H}.
\]
Evidently,
$\dl(ab) = \dl(a) \dl(b) = (a \ox 1_H)(b \ox 1_H) = ab \ox 1_H$. 
Therefore, $ab \in A^{\co H}$ for all $a,b \in A^{\co H}$.

\begin{remark}
Because $\dl$ is an algebra homomorphism, 
$\dl(aa') = \dl(a) \dl(a') = (a \ox 1) ({a'}_{\!(0)} \ox {a'}_{\!(1)})
= (a {a'}_{\!(0)} \ox {a'}_{\!(1)}) = a \dl(a')$, meaning that
$$
A^{\co H} \subseteq 
\set{a \in A : \dl(aa') = a\dl(a') \mot{for all} a' \in A}.
$$
On the other hand, if 
$(a_{(0)} {a'}_{\!(0)}) \ox (a_{(1)} {a'}_{\!(1)}) = \dl(aa') 
= a \dl(a') = (a {a'}_{\!(0)}) \ox ({a'}_{\!(1)})$ holds for every
$a' \in A$, then $A^{\co H} 
= \set{a \in A : \dl(aa') = a \dl(a') \mot{for all} a' \in A}$.
\end{remark}

Two bialgebras $H$ and $H'$ are said to be \textit{in duality}%
\footnote{%
For Hopf algebras (see subsection~\ref{ssc:Hopf-algebras}), it is also
requested that the corresponding antipodes be dual to each other:
$\duo{S(h)}{h'} = \duo{h}{S'(h')}$.}
if there exists a nondegenerate bilinear pairing
$\duo{\.}{\.} : H \x H' \to k$ such that, for all $h,l \in H$ and
$h',l' \in H'$:
\[
\begin{aligned}
\duo{hl}{h'} &= \duo{h \ox l}{\Dl' h'}_\ox,  \\
\duo{h}{h'l'} &= \duo{\Dl h}{h' \ox l'}_\ox,
\end{aligned}  \qquad
\begin{aligned}
\eps'(h') &= \duo{1_H}{h'},  \\
\eps(h) &= \duo{h}{1_{H'}}.
\end{aligned} 
\]
Here $\duo{\.}{\.}_\ox : (H \ox H) \x (H' \ox H') \to k$ refers to the
nondegenerate bilinear pairing given by
$\duo{h \ox l}{h' \ox l'}_\ox := \duo{h}{h'} \duo{l}{l'}$. It is
usually denoted simply $\duo{\.}{\.}$ with a slight abuse of notation.

\begin{example} 
\label{eg:duality-pair} 
For a finite group $G$ and a field $F$, the bialgebra of functions
$\sO(G)$ and the group (bi)algebra $F[G]$ of
Examples~\ref{eg:bialg-func} and~\ref{eg:alg-grupo} are in duality.
The nondegenerate bilinear pairing 
$\duo{\.}{\.} \: \sO(G) \x F[G] \to F$ is given on basic elements by
$\duo{e_h}{g} := \dl_{h,g}$.

In those examples, $\eps_{\sO(G)}(f) = f(1_G) = \duo{f}{1_{F[G]}}$ and
$\eps_{F[G]}(a) = 1_F = 1_{\sO(G)}(a) = \duo{1_{\sO(G)}}{a}$, for 
$a \in F[G]$, where $1_{\sO(G)}$ is the constant function on~$G$
with value $1_F$. Also,
$\duo{f}{ab} = f(ab) = \duo{\Dl_{\sO(G)} f}{a \ox b}$ and
$\duo{fl}{a} = f(a) l(a) = \duo{f \ox l}{\Dl_{F[G]} a}$, for 
$a,b \in F[G]$ and $f,l \in \sO(G)$.
\end{example}

Given two bialgebras $H$ and $H'$ in duality, and an algebra coaction
$\dl \: A \to A \ox H'$, one can define a Hopf action
$\rho \: H \ox A \to A$ by setting
$\rho(h \ox a) := a_{(0)} \,\duo{h}{a_{(1)}}$, for $h \in H$,
$a \in A$. Clearly
$\rho(h \ox 1_A) = 1_A \,\duo{h}{1_{H'}} = \eps(h)\,1_A$, and
\begin{align*}
\rho(h l \ox a) &= a_{(0)} \,\duo{h l}{a_{(1)}}
= a_{(0)} \,\duo{h \ox l}{a_{(1)} \ox a_{(2)}}_\ox
= a_{(0)} \,\duo{h}{a_{(1)}} \,\duo{l}{a_{(2)}}
\\
&= \rho(h \ox a_{(0)}) \,\duo{l}{a_{(1)}} = \rho(h \ox \rho(l \ox a)),
\end{align*}
for $h,l \in H$ and $a \in A$. Thus the coaction $\dl$ transposes to
an algebra action $\rho$ of $H$ on~$A$. The homomorphism property
of~$\dl$ shows that this action has the Hopf properties:
\begin{align*}
\rho(h \ox ab) &= (ab)_{(0)} \,\duo{h}{(ab)_{(1)}}
= a_{(0)} b_{(0)} \,\duo{h}{a_{(1)} b_{(1)}}
\\
&= a_{(0)} b_{(0)} \,\duo{h_{(1)}}{a_{(1)}} \,\duo{h_{(2)}}{b_{(1)}}
= \bigl( a_{(0)} \,\duo{h_{(1)}}{a_{(1)}} \bigr)
\bigl( b_{(0)} \,\duo{h_{(2)}}{b_{(1)}} \bigr)
\\
&= \rho(h_{(1)} \ox a) \rho(h_{(2)} \ox b),
\end{align*}
and also $\rho(h \ox 1_A) = 1_A \duo{h}{1_{H'}} = \eps(h)\,1_A$ since
$\dl(1_A) = 1_A \ox 1_{H'}$.

\begin{example} 
\label{eg:graded-algebras}
As in Example~\ref{eg:duality-pair}, consider the bialgebras in
duality $\sO(G)$ and $F[G]$, for a finite group $G$. Let $A$ be a
\textit{$G$-graded} algebra, that is, an algebra such that
\[
A = \bigoplus_{g\in G} A_g, \qquad
A_g A_h \subseteq A_{gh} \mot{for all} g,h \in G, \qquad
1_A \in A_{1_G}\,.
\]
Define $\dl\: A \to A \ox F[G]$ by $\dl(a) := a \ox g$ if $a \in A_g$,
whereby $(\id_A \ox \eps_{F[G]})\dl (a)
= (\id_A \ox \eps_{F[G]})(a \ox g) = a \ox 1_F = a = \id_A(a)$, and
\begin{align*}
(\dl \ox \id_{F[G]})\dl(a) 
&= (\dl \ox \id_{F[G]})(a \ox g)
= a \ox g \ox g
\\
&= (\id_A\ox \Dl_{F[G]}) (a \ox g)
= (\id_A \ox \Dl_{F[G]}) \dl(a).
\end{align*}
Thus $A$ is a right comodule of $F[G]$.

Consider 
$\rho :\sO(G) \ox A \to A : f \ox a \mapsto a_{(0)} \duo{f}{a_{(1)}}$,
with $\dl(a) = a_{(0)} \ox a_{(1)}$. In the case $f = e_h$ and
$a \in A_g$, we obtain
$\rho(e_h \ox a) := a \duo{e_h}{g} = \dl_{h,g} a$. In particular, for
$a \in A_g$,
\[
\rho(1_{\sO(G)} \ox a) = \duo{1_{\sO(G)}}{g} \,a = 1_{\sO(G)}(g) a
= 1_G a = a. 
\]
As a result, $\rho(1_{\sO(G)} \ox a) = a$ for every $a \in F[G]$.
Furthermore, for $a \in A_g$,
\[
\rho(f \ox \rho(l \ox a)) =  \rho(f \ox (a \duo{l}{g}))
= a \duo{f}{g} \duo{l}{g}
= a \duo{fl}{g} = \rho(fl \ox a),
\]
for every $f, l \in \sO(G)$ because $\sO(G)$ and $F[G]$ are in
duality. By linearity this holds for all $a \in A$; hence
$\rho \: \sO(G) \ox A \to A$ is an action of $\sO(G)$ on~$A$.

Also, $\rho(e_h \ox 1_A) = \dl_{h,1_G} (1_A) 
= e_h(1_G) 1_A = \eps_{\sO(G)}(e_h) 1_A$, and then
$\rho(f \ox 1_A) = \eps_{\sO(G)}(f) 1_A$ for every $f \in \sO(G)$.
Once more, with $a\in A_g$, $a' \in A_{g'}$:
\begin{align*}
\rho(e_h \ox a a') &= a a' \duo{e_h}{g g'}
= a a' \duo{\Dl_{\sO(G)}(e_h)}{g \ox g'}_\ox
= a a' \duo{{e_h}_{(1)} \ox {e_h}_{(2)}}{g \ox g'}_\ox
\\
&
= a \duo{{e_h}_{(1)}}{g} a' \duo{{e_h}_{(2)}}{g'} 
= \rho({e_h}_{(1)}\ox a) \rho({e_h}_{(2)} \ox a'),
\end{align*}
where we have again identified $\sO(G)\ox\sO(G)$ with $\sO(G \x G)$
and $\Dl_{\sO(G)}(f)(g,g') := f(gg')$. By linearity,
$\rho(f \ox a a') = \rho(f_{(1)}\ox a) \rho(f_{(2)} \ox a')$ for all
$f \in \sO(G)$, meaning that $\rho$ is a left Hopf action of $\sO(G)$ 
on~$A$.

Now we provide the graded algebra $A$ with a coalgebra structure such
that the linear map $\dl \: A \to A \ox F[G]$ given by $\dl(a) = a \ox
g$ if $a \in A_g$ is an algebra coaction of $F[G]$ on $A$. For every
basic element of the algebra $A$ define $\eps(a) := 1_F$ and
$\Dl_A(a) := a \ox a$. Easily, $\eps_A(aa') = \eps_A(a) \eps_A(a')$
for every $a,a' \in A$ and $\Dl_A(1_A) = 1_A \ox 1_A$. Easily,
$\eps_A(aa') = \eps_A(a) \eps_A(a')$ for every $a,a' \in A$ and
$\Dl_A(1_A) = 1_A \ox 1_A$. Also, it follows that
$(\id_A \ox \eps_A) \circ \Dl_A = \id_A 
= (\eps_A \ox \id_A) \circ \Dl_A$. Furthermore, 
$(\id_A \ox \Dl_A) (a \ox a) = a \ox a \ox a 
= (\Dl_A \ox \id_A) (a \ox a)$. Thus, $(A,\Dl_A,\eps_A)$ is a 
coalgebra, and this $A$ is a bialgebra, since
$\Dl(aa') = (aa') \ox (aa') = (a\ox a) (a' \ox a') =  \Dl(a)\Dl(a')$,
for all $a,a' \in A$.

To verify that $\dl$ is a coalgebra homomorphism, note first that
since $1_A \in A_{1_G}$, then $\dl(1_A) = 1_A \ox 1_G
= 1_A \ox 1_{F[G]}$. Also, for a basic element $a \in A_g$,
$\eps_{A \ox F[G]} \dl(a) = \eps_{A \ox F[G]}(a \ox g) = 1_F
= \eps_A(a)$ and
\[
(\dl \ox  \dl) \Dl_A (a) = (\dl \ox  \dl) (a \ox a) 
= (a \ox g) \ox (a \ox g)
= \Dl_{A \ox F[G]} (a \ox g)
= \Dl_{A \ox F[G]} \dl (a).
\]
Hence, $A$ is a right $F[G]$-comodule algebra, once we extend these
identities by linearity.
\end{example}

If two bialgebras $H$ and $H'$ in duality, and there is a coaction
$\dl \: A \to A \ox H' : a \mapsto a_{(0)} \ox a_{(1)}$,
with corresponding Hopf action
$\rho \: H \ox A \to A$ as defined above, then $A^{\co H'} = A^H$. 

In fact, $A^{\co H'} \subseteq A^H$ since $\dl(a) = a \ox 1_{H'}$ for
$a \in A^{\co H'}$. Conversely, if $a \in A^H$, then
$$
a_{(0)} \duo{h}{a_{(1)}} = \rho(h \ox a) = \eps_H(h) a 
= \eps_H(h) a_{(0)} \eps_{H'}(a_{(1)}),
$$
meaning that $\duo{h}{a_{(1)}} = \eps_H(h) \eps_{H'}(a_{(1)})
= \duo{h}{1_{H'}} \duo{1_H}{a_{(1)}}$, for $h \in H$, $a \in A$. From
the nondegeneracy of the bilinear form, it follows that
$a_{(1)} = \duo{1_H}{a_{(1)}} 1_{H'}$ and
$$
\dl(a) = (a_{(0)} \,\duo{1_H}{a_{(1)}}) \ox 1_{H'}
= \rho(1_H \ox a) \ox 1_{H'} = a \ox 1_{H'}\,.
$$

\begin{example} 
\label{eg:un-named}
In Example~\ref{eg:graded-algebras}, for $\rho \: \sO(G) \to A$, we
can write $a = \sum_{g\in G} a_g$ with $a_g \in A_g$, and also
$f = \sum_g f_g e_g$, to obtain
\[
\rho(f \ox a) = \sum_{g,h\in G} \rho(f_h e_h \ox a_g) 
= \sum_{g,h\in G} f_h a_g \,\duo{e_h}{g} 
= \sum_{g,h\in G} f_h a_g \,\dl_{h,g} = \sum_{g\in G} f_g a_g.
\]
Since $\eps(f) a = f(1_G) a = \sum_g f_g e_g (1_G) a = f_{1_G} a$, we
conclude that $A^{\sO(G)} = A_{1_G}$.

On the other hand, for $\dl \: A \to A \ox F[G]$, we may put 
$\dl(a) =: \sum_{g\in G} a_g \ox g$ which equals $a \ox 1_{F[G]}$ if
and only if $a \in A_g$, meaning that
$A^{\co H} = A_{1_G} = A^{\co  \sO(G)}$.
\end{example}


\subsubsection*{Modules}
\label{sss:modules}

Let $A$ be a (unital) $k$-algebra with unit $\eta$. We know
$ra = \eta(r)a = a\eta(r) = ar$, for $r \in k$, $a \in A$. If a
$k$-module $M$ is also an $A$-module in the sense of these notes, 
there is an action $\rho \: A \ox M \to M$. For $M$ to be an
$A$-module in the usual sense, besides the straightforward $A$-linear
properties, necessarily $r(am) = a(rm)$ for all $r \in k$, $a \in A$,
$m \in M$. If we define $\lt = \rho \circ \ox : A \x M \to M$, because
of the $k$-linearity of $\rho$ we obtain $r(a \lt m) = a \lt(rm)$,
meaning that $M$ is an $A$-module in the usual sense. Indeed,
\begin{align*}
r(\rho \circ \ox)(a,m) 
&= r\rho(a \ox m) = \rho(r(a \ox m)) = \rho(ra \ox m) = \rho(ar \ox m)
\\
&= \rho(a \ox rm) = \rho \circ \ox(a,rm).
\end{align*}

On the other hand, assume that $M$ is an $A$-module in the usual
sense. We wish to define an action $\rho \: A \ox M \to M$. Using the
$A$-module structure $\lt \: A \x M \to M$ we define
$a \ox m \mapsto a \lt m$ (extended by linearity). That this is an
action is a consequence of the universal property of tensor products:
$$
A \x M \longto^\lt M = A \x M \longto^\ox A \ox M
\longto^{\tilde\lt} M.
$$

We also need $k$-linearity. Distributivity of the sum is easy, and
$\rho(r(a \ox m)) = \rho(ra \ox m) = ra \lt m = r \rho(a \ox m)$,
thanks to the module structure of~$M$.


\section{Hopf--Galois extensions and Hopf algebras} 
\label{sec:HG-extensions}

A free (and continuous) action of a topological group $G$ on a
topological space $X$ is equivalent to the existence of a (continuous)
canonical map $X \x G \to X \x_G X : (x,g) \mapsto (x,x\.g)$. A
principal bundle (i.e., a free and proper action of $G$ on~$X$) is
equivalent to a Hausdorff quotient space $X/G$ and a homeomorphic
canonical map (see \cite{Liguria} for example). Besides its original
motivation as a generalization for Galois extensions in field theory
to more general structures \cite{ChaseHarrisonRosenberg,
ChaseSweedler}, the concept of a Hopf--Galois extension might be
considered as an extension to the noncommutative realm of the
bijectivity of the canonical map. The original idea comes from
\cite{Schneider1990}, it is given a concrete geometrical
interpretation in~\cite{HajacThesis} (see also
\cite{BrzezinskiHajac2009}) and generalized in
\cite{BrzezinskiMajid2}, among others. If $G$ is a compact Lie group,
the matrix elements of irreducible representations of~$G$ form a Hopf
algebra, making it interesting enough to work out its algebraic
formulation carefully.

Let $A$ and $B$ be algebras over $k$ such that $B \subseteq A$ and
there is an algebra action $\rho$ of a bialgebra $H$ over the algebra
$A$ that leaves $B$ invariant, i.e., $\rho(h \ox b) = \eps(h)b$ for
all $b \in B$. Then we say that $A$ is an \textit{algebra extension of
$B$ by the bialgebra $H$}. If there is a Hopf action of the bialgebra
$H$ over the algebra $A$ then, $A$ is an algebra extension of $A^H$
by~$H$.

Let $A$ and $B$ be algebras over $k$ such that $B \subseteq A$ and
there is an algebra coaction $\dl$ of a bialgebra $H$ over the algebra
$A$ that leaves $B$ coinvariant, i.e., $\dl(b) = b\ox 1_H$ for all $b$
in $B$. Then we say that $A$ is a \textit{coalgebra extension of $B$
by the bialgebra $H$}. If there is an algebra coaction of the
bialgebra $H$ over the algebra $A$, then $A$ is a coalgebra extension
of $A^{\co H}$ by $H$.

For $B$ a subalgebra of an algebra $A$, the \textit{balanced tensor 
product} $A \ox_B A$ is the quotient
\[
A \ox_B A := (A \ox A)/J_B
\] 
with $J_B$ the ideal generated by the elements of the form
$ab \ox c - a \ox bc$ for $a,c \in A$ and $b \in B$. We denote an
element in $A \ox_B A$ as $a \ox_B c = a \ox c + J_B$ for $a,c \in A$.
As a consequence, $ab \ox_B c = a \ox_B bc$ for all $b \in B$.

Let $\dl \: A \to A \ox H$ be an algebra coaction of the bialgebra $H$
on the algebra $A$, and let $B = A^{\co H}$ be the corresponding
coinvariant subalgebra. The \textit{canonical map} for the
coaction~$\dl$ is the $k$-linear map
\[
\can \: A \ox_B A \to A \ox H
: a \ox_B c \mapsto a\dl(c) = a c_{(0)} \ox c_{(1)} \,.
\]
where $\dl(c) = c_{(0)} \ox c_{(1)}$ as usual in Sweedler's notation. 
That `$\can$' is well defined can be seen as follows. If
$a \ox_B c = a' \ox_B c'$ then $a \ox c - a' \ox c'$ belongs to the
ideal $J_B$ arising from $B = A^{\co H}$. Thus, $a \ox c - a' \ox c'$
is a finite sum of elements
$a''b'' \ox c'' - a'' \ox b''c''$ with $a'',c'' \in A$ and 
$b'' \in A^{\co H}$ (i.e., $\dl(b'') = b'' \ox 1_H$). In this way,
with summation understood,
\begin{align*}
(a'' b'')\dl(c'') - a'' \dl(b'' c'') 
&= (a''b'')({c''}_{\!(0)} \ox  {c''}_{\!(1)}) - a'' \dl(b'')\dl(c'')             
\\
&= (a''b''{c''}_{\!(0)}) \ox  {c''}_{\!(1)} 
- a'' (b''{c''}_{\!(0)} \ox {c''}_{\!(1)})  = 0,
\end{align*}
where we use that $\dl$ is an algebra homomorphism and that $A \ox H$
is a left $A$-module. Clearly, $\can$ is left $A$-linear, i.e.,
$\can(ax) = a\,\can(x)$ for $a \in A$, $x \in A \ox_B A$. In other
words, $\can$ intertwines the obvious left-module actions of $A$ on
$A \ox_B A$ and on $A \ox H$. It is also clear that $\can$ intertwines
the $H$-coactions $\id_A \ox_B \dl$ on $A \ox_B A$ and $\id_A \ox \Dl$
on $A \ox H$, that is,
\begin{align*}
(\can \ox \id_H)(\id_A \ox_B \dl)(a \ox_B c)
&= (\can \ox \id_H)(a \ox_B c_{(0)} \ox c_{(1)})
\\
&= a c_{(0)} \ox c_{(1)} \ox c_{(2)} 
= (\id_A \ox \Dl)(\can(a \ox_B c)).
\end{align*}
We therefore say that $\can$ is \textit{right $H$-colinear}.

\begin{remark}
To verify that can is well defined, one could also use
$\can := (\mu \ox \id_H)(\id_A \ox \dl)$. 
If $b \in B$, then $b_{(0)} \ox b_{(1)} = \dl(b) = b \ox 1$ and thus
$c (ba)_{(0)} \ox (ba)_{(1)} = c b_{(0)}a_{(0)} \ox b_{(1)}a_{(1)}
= cb a_{(0)} \ox a_{(1)}$, so that the linear map
$\wt{\can}  \: A \ox A \to A \ox H
: c \ox a \mapsto c a_{(0)} \ox a_{(1)}$ satisfies
$\wt{\can}(c \ox ba) = \wt{\can}(cb \ox a)$ for $b \in B$, so
$\wt{\can}$ vanishes on $J_B$, i.e., $J_B \subseteq \ker(\wt{\can})$.

As usual, $(A \ox A)/\ker(\wt{\can}) \equiv \wt{\can}(A \ox A)
\subseteq A \ox H$, invoking the first isomorphism theorem, so that
$\can \: A \ox_B A \to A \ox H$ is an isomorphism if and only if
$\wt{\can}$ is an epimorphism with $J_B = \ker(\wt{\can})$.
\end{remark}

A coalgebra extension $B \subseteq A$ by a bialgebra $H$ is said to be
a \textit{Hopf--Galois} (or simply an $H$-Galois) \textit{extension}
if the canonical map is bijective.

\begin{remark}
It is evident that the previous definition of a Hopf--Galois extension
is what should be called a \textit{right} Hopf--Galois extension, and
that \textit{left} Hopf--Galois extensions arise from a similar
definition.
\end{remark}

\begin{example} 
\label{eg:H-over-H}
If $H$ be a bialgebra; its coproduct $\Dl$ is an algebra coaction of
the bialgebra $H$ over itself with
$H^{\co H} = \set{h \in H : \Dl(h) = h \ox 1_H}$. If
$h \in H^{\co H}$, then $h = (\eps \ox \id_H)(\Dl h)
= (\eps \ox \id_H)(h \ox 1_H) = \eps(h) 1_H$, which implies 
$H^{\co H} = k$. With $B = H^{\co H} = k$, it follows that
$H \ox_B H = H \ox H$ and
$\can(h \ox h') = h\Dl(h') = h {h'}_{\!(1)} \ox {h'}_{\!(2)}$. As a
result, $\can \in \End(H \ox H)$ in this particular case.
\end{example}

This example is far from trivial. In fact, we shall see in
subsection~\ref{ssc:Hopf-algebras} how a bialgebra~$H$ whose canonical
map is invertible is actually a Hopf algebra (see
Proposition~\ref{pr:Hopf-and-can-map}). More generally, if a bialgebra
$H$ admits a coaction whose canonical map is bijective, then $H$ has a
Hopf algebra structure (see
Proposition~\ref{pr:bialgebra-with-coaction-and-bijective-map} for the
finite-dimensional case and
Proposition~\ref{pr:Hopf-and-faithfully-flat} for the flat case).

\begin{example} 
\label{eg:can-graded-algebra}
Back to Example~\ref{eg:graded-algebras}, consider the algebra
coaction $\dl \: A \to A \ox F[G]$ on the bialgebra $F[G]$ over a
graded algebra $A$, with $B = A^{\co F[G]} = A_1$. The elements of the
balanced tensor product $A \ox_{A_1} A$ can be written as the original
elements of $A \ox A$, keeping in mind that they enjoy the extra
property: $a a_1 \ox a' = a \ox a_1 a'$. In this way,
$\can \: A \ox_{A_1} A \to A \ox F[G]$ is defined, with $a_g \in A_g$,
by
\[
\can \biggl( \sum_{g,g'\in G} a_g \ox a_{g'} \biggr)
:= \sum_{g'\in G} \biggl( \sum_{g\in G} a_g \biggr) a_{g'} \ox g',
\]
so the kernel of $\can$ is given by sums
$\sum a_g \ox a_{g'} \in A \ox_{A_1} A$ such that
$\sum a_g a_{g'} = 0$. As for 
$\can(A \ox_{A_1} A) \subseteq A \ox F[G]$, to obtain a surjective map
we require that any element of $a \in A$ be written in the form
$a = \sum a_g a_{g'}$ with $a_g \in A_g$.

One possibility to achieve a bijective canonical map in this
particular situation is to request the graded algebra to be
\textit{strongly graded}, meaning $A_g A_{g'} = A_{gg'}$ for every
$g,g' \in G$, which immediately implies surjectivity of~$\can$. As for
injectivity, $0 = \sum a_g a_{g'} \equiv \sum a_{gg'}$ if and only if
each $a_{gg'} = 0$, since $A = \bigoplus A_g$.
\end{example}

\begin{example} 
\label{eg:O-of-A}
Let $G$ be a group with a free right action on a set $X$ (i.e.,
$x\.g = x$ if and only if $g = 1_G$). Since in
Example~\ref{eg:bialg-func} $G$ need not actually be finite, we can
take the bialgebra $H = \sO(G)$. Also consider the algebra
$A = \sO(X)$ of functions from $X$ to a field $F$, with pointwise
operations. Because $\al \: X \x G \to X$, we know that
$\al^* \: \sO(X) \to \sO(X \x G)$ is given by
$\al^*(f) = f \circ \al$, i.e., $\al^*(f)(x,g) = f(x\.g)$.

In this example we use several identifications. First, if
$f \in \sO(X)$ and $h \in \sO(G)$, then $f \ox h$ gets identified with
the two-variable function $(x,g) \mapsto f(x)\,h(g)$, so
$\sO(X) \ox \sO(G) \equiv \sO(X \x G)$.
 
There is a coaction $\dl \: \sO(X) \to \sO(X \x G)$ given by 
$(\dl f)(x,g) := f(x\.g) = \al^*(f)(x,g)$. An element $f$ of the
coinvariant subalgebra obeys
$f(x\.g) = \dl(f)(x,g) = f \ox 1_{\sO(G)}(x,g) = f(x)$. Hence,
\[
\sO(X)^{\co\sO(G)} = \set{f \in \sO(X)
: f(x\.g) = f(x) \mot{for all} (x,g) \in X \x G} \equiv \sO(X/G).
\]

With $B = \sO(X/G)$, let us show that the map 
$\can \: \sO(X) \ox_B \sO(X) \to \sO(X) \ox \sO(G)$ is a bijection.
Here, $\can(f \ox_B f')(x,g) = (f\,\dl(f'))(x,g) = f(x)\,f'(x\.g)$.

A free action implies that $X \x G \otto X \x_G X
= \set{(x,x') \in X \x X : x' = x\.g \,\text{for some}\, g \in G}$;
i.e., if $(x,x') \in X \x_G X$ there is a unique $g \in G$ with
$x' = x\.g$, and then $\sO(X \x G) \equiv \sO(X \x_G X)$. The
identification $f \ox f' \equiv [(x,y) \mapsto f(x)\,f'(y)]$ shows
that $\can(f \ox_B f') = (f \ox f')|_{X \x_G X}$, which makes
commutative the diagram
\[
\xymatrix{
\sO(X) \ox \sO(X) \ar[d]_\pi \ar[r]^\equiv &
\sO(X \x X) \ar[d]^{|_{\x_G}}  \\
\sO(X) \ox_B \sO(X) \ar[r]^(.3){\can} & 
\sO(X) \ox \sO(G) \equiv \sO(X \x_G X) \equiv  \sO(X \x G), }
\]
where the second vertical arrow is the restriction of a map 
$X \x X \to F$ to $X \x_G X \to F$.

From the Remark that $\can$ is right $H$-colinear, we know that in
general $J_{\sO(X/G)} \subseteq \ker(\wt{\can})$. As a result,
$\wt{\can} \: \sO(X) \ox \sO(X) \to \sO(X) \ox \sO(G)$, the extension
of~$\can$, is precisely either of the compositions in this diagram,
and our task is to show that the bottom map is bijective. As a
consequence, $\sO(X) \ox_B \sO(X)$ will consist of restrictions of
elements in $\sO(X \x X)$ to $X \x_G X$.

Since the map $|_{\x_G}$ is evidently surjective (for a given map in
$\sO(X \x_G X)$. consider an extension by zero to $X \x X$), we know
that $\wt{\can}$, and hence $\can$, is surjective. If
$0 = \wt{\can}(f \ox f') = (f \ox f')|_{X \x_G X}$, then for any
$a,c \in \sO(X)$ and $b \in B = \sO(X/G)$ there holds
$(f \ox f' + ab \ox c - a \ox bc) (x,x\.g) 
= 0 + a(x) b(x) c(x\.g) - a(x) b(x\.g) c(x\.g) = 0$, since
$b(x) = b(x\.g)$. Thus, $f \ox f' \in J_{\sO(O/X)}$. Therefore
$\ker(\wt{\can}) = J_{\sO(O/X)}$, and so the canonical map is
bijective; and $\sO(X/G) \subseteq \sO(X)$ is a Hopf--Galois extension
by $\sO(G)$.
\end{example}

So far, we have used only bialgebras; Hopf algebras have not yet been
mentioned in this context. Particular situations in which there exists
an antipode is the finite-dimensional case, see
Proposition~\ref{pr:bialgebra-with-coaction-and-bijective-map} below;
or when there exists a faithfully flat $k$-module $A$, see
Proposition~\ref{pr:Hopf-and-faithfully-flat}. These ideas are
introduced in subsection~\ref{ssc:Hopf-algebras}.

Before broaching Hopf algebras and antipodes, and following the line
of ideas in the foregoing examples, we know that in
Example~\ref{eg:H-over-H},
$\can_H(h \ox h') = h\Dl(h') = h {h'}_{\!(1)} \ox {h'}_{\!(2)}$. In
the next result, the bijectivity of this particular canonical map is
matched to the existence of an antipode.

\begin{lemma} 
\label{lm:antipode-vs-can}
Given a bialgebra $(H, \mu, \eta, \Dl, \eps)$, if there exists a 
linear map $S \: H \to H$ such that
$\mu(S \ox \id_H) \Dl = \eta \eps = \mu(\id_H \ox S) \Dl$, then the
canonical map corresponding to the coaction $\dl = \Dl$ is bijective,
meaning that $k \subseteq H$ is a Hopf--Galois extension by the
bialgebra~$H$.
\end{lemma}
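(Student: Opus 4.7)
The plan is to explicitly construct a two-sided inverse for the canonical map of Example~\ref{eg:H-over-H}. Recall from that example that $H^{\co H} = k$, so $H \ox_B H = H \ox H$ and $\can(h \ox h') = h\,h'_{(1)} \ox h'_{(2)}$. Motivated by the classical Hopf algebra calculation, I would define
\[
\Ga \: H \ox H \to H \ox H,  \qquad
\Ga(h \ox h') := h\,S(h'_{(1)}) \ox h'_{(2)},
\]
which is $k$-linear since $\Dl$ and $S$ are, and then verify that $\can \circ \Ga = \id_{H\ox H} = \Ga \circ \can$.

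For $\can \circ \Ga$, I would expand
\[
\can(\Ga(h \ox h'))
= h\,S(h'_{(1)})\,(h'_{(2)})_{(1)} \ox (h'_{(2)})_{(2)}
= h\,S(h'_{(1)})\,h'_{(2)} \ox h'_{(3)},
\]
using the right-hand reading $(\id_H \ox \Dl)\Dl$ of $\Dl^2(h')$. The key step is to switch to the left-hand reading $(\Dl \ox \id_H)\Dl$, under which this expression becomes $h\,S((h'_{(1)})_{(1)})\,(h'_{(1)})_{(2)} \ox h'_{(2)}$; the hypothesis $\mu(S \ox \id_H)\Dl = \eta\eps$ applied to $h'_{(1)}$ collapses the first factor to $h\,\eps(h'_{(1)})\,1_H$, and counitality $\eps(h'_{(1)})\,h'_{(2)} = h'$ finishes the job. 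For $\Ga \circ \can$, the computation is symmetric: expanding $\Ga(\can(h \ox h')) = h\,h'_{(1)}\,S((h'_{(2)})_{(1)}) \ox (h'_{(2)})_{(2)}$ and using coassociativity the other way (this time keeping the left-hand reading), I would invoke $\mu(\id_H \ox S)\Dl = \eta\eps$ on $h'_{(1)}$ to reduce to $h \ox \eps(h'_{(1)})\,h'_{(2)} = h \ox h'$.

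Neither direction presents a genuine obstacle; the only place where care is needed is in keeping the two associativity-equivalent expansions of $\Dl^2(h')$ straight so that the antipode identities can be applied to the \emph{same} Sweedler component whose image under $\eps$ will then be absorbed by the counit axiom. Once both compositions are shown to be $\id_{H \ox H}$, the canonical map corresponding to $\dl = \Dl$ is bijective, and since $B = H^{\co H} = k$, this is exactly the statement that $k \subseteq H$ is a Hopf--Galois extension by~$H$.
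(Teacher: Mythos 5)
Your proposal is correct and follows essentially the same route as the paper: both define the candidate inverse $h \ox h' \mapsto h\,S(h'_{(1)}) \ox h'_{(2)}$ and verify via coassociativity, the two antipode identities, and counitality that it is a two-sided inverse of $\can_H$ on $H \ox_B H = H \ox H$. The only cosmetic difference is that the paper first checks that this map commutes with $\can_H$ and then shows one composite is the identity, whereas you compute both composites directly; the content is the same.
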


\begin{proof}
If such a linear map $S$ exists, satisfying
$S(h_{(1)}) h_{(2)} = \eps(h) 1_H = h_{(1)} S(h_{(2)})$, define the
linear map $\vf \: H \ox H \to H \ox H$ by
$\vf(h \ox h') := h S({h'}_{\!(1)}) \ox {h'}_{\!(2)}$. Then
\begin{align*}
\can_H(\vf(h \ox h')) &= \can_H(h S({h'}_{\!(1)}) \ox {h'}_{\!(2)})
= h S({h'}_{\!(1)}) {h'}_{\!(2)} \ox {h'}_{\!(3)}
\\
& = h {h'}_{\!(1)} S({h'}_{\!(2)}) \ox {h'}_{\!(3)}
= \vf(h {h'}_{\!(1)} \ox {h'}_{\!(2)}) = \vf(\can_H(h \ox h')). 
\end{align*}
Since $h S({h'}_{\!(1)}) {h'}_{\!(2)} \ox {h'}_{\!(3)}
= h \eps({h'}_{\!(1)}) \ox {h'}_{\!(2)}
= h \ox \eps({h'}_{\!(1)}) {h'}_{\!(2)} = h \ox h'$, we conclude that
$\vf$~is precisely the inverse of~$\can_H$. Henceforthe we call it
$\can_H^{-1}$.
\end{proof}

\begin{example} 
\label{eg:bialgebra-bin-S}
Consider the polynomial algebra $F_b[X]$ of
Example~\ref{eg:bialgebra-bin}. Since $\eps(1) = 1$ and
$\eps(X^n) = 0$ for any $n > 0$, the same holds for the operator
$\eta\eps$. In this situation, $1 = \mu(S \ox \id_H) \Dl (1) = S(1)$.
Also $0 = \eta\eps(X) = \mu(S \ox \id_H) \Dl (X) = S(1) X + S(X) 1
= X + S(X)$, implying that $S(X) = -X$. If we assume inductively that
$S(X^k) = (-X)^k$ for $k < n$, then also
\begin{align*}
0 &= \mu(S \ox \id_H) \Dl(X^n)  
= \sum_{k=0}^{n-1} \binom{n}{k} S(X^k) X^{n-k} + S(X^n)
\\
&= \sum_{k=0}^{n-1} \binom{n}{k} (-1)^k X^n + S(X^n)
= -(-X)^n + S(X^n).
\end{align*}
As a result, the operator $S(X^n) = (-X)^n$, extended by linearity, 
satisfies $\mu(S \ox \id_H)\Dl = \eta\eps = \mu(\id_H \ox S)\Dl$, and
then $F \subseteq F_b[X]$ is a Hopf--Galois extension by the
bialgebra $F_b[X]$.
\end{example}

We postpone the reciprocal statement of Lemma~\ref{lm:antipode-vs-can}
until Proposition~\ref{pr:Hopf-and-can-map} below, after we introduce
convolution of linear maps (in this particular situation from $H$
to~$H$). There we shall see that the properties of $S$ are written as
$S * \id_H = 1_* = \id_H * S$.


\subsection{Hopf algebras} 
\label{ssc:Hopf-algebras}

In this section we review Hopf algebras to the extent they relate to
the concept of Hopf Galois extension.

Among the many references available, \cite{Sweedler, PepeJoeHector}
and \cite{JoesGalois} have been inspirational for these notes. Given
two $k$-linear maps $f,g \in \Hom(C,A)$ between a coalgebra $C$ and an
algebra $A$, the \textit{convolution} $f * g \in \Hom(C,A)$ is defined
as the composition of maps%
\footnote{%
On grouplike elements this is just pointwise multiplication: if
$\Dl c = c \ox c$, then $\mu \circ (f \ox g)(c \ox c) = f(c) g(c)$.}
\[
C \longto^\Dl  C \ox C \xrightarrow{f\ox g} A \ox A \longto^\mu A.
\]
$\Hom(C,A)$ is a unital algebra whose unit element is the composition
$\eta_A \eps_C = \eta\eps$. Indeed, associativity can be read out of
the following diagram:
\[
\xymatrix{\ar@{}[dr] 
& C \ox C \ar[dr]^{\Dl\ox\id_C} \ar[rrr]^{(f*g)\ox h} &&&
A \ox A \ar[dr]^\mu & 
\\
C \ar[ur]^\Dl \ar[dr]_\Dl && C \ox C \ox C \ar[r]^{f\ox g\ox h} &
A \ox A \ox A \ar[ur]^{\mu\ox\id_A} \ar[dr]_{\id_A\ox\mu} && A 
\\
& C \ox C \ar[ur]_{\id_C\ox\Dl} \ar[rrr]_{f\ox(g*h)} &&& 
A \ox A \ar[ur]_\mu & }
\]
Also, 
\begin{align*}
f * (\eta\eps)(c) &= \mu \circ (f \ox \eta\eps) \circ \Dl(c)
=  \mu \circ (f \ox \eta\eps) (c_{(1)} \ox c_{(2)})
\\
&= \mu \bigl( f(c_{(1)}) \ox \eta\eps(c_{(2)}) \bigr) 
= f(c_{(1)}) \eps(c_{(2)}) = f(c).
\end{align*}
Similarly, $(\eta\eps) * f = f$.

A bialgebra $H$ is a \textit{Hopf algebra} if there exists a
$k$-module homomorphism $S \: H \to H$ such that
\[
S * \id_H  = \mu(S \ox \id_H)\Dl = \eta\eps = \mu(\id_H \ox S)\Dl
= \id_H * S.
\]
Equivalently,
$S(h_{(1)}) h_{(2)} = \eps(h) 1_H = h_{(1)} S(h_{(2)})$ for all
$h \in H$. The homomorphism $S$ is a convolution inverse for the
identity map on $H$ and it is called the \textit{antipode}. In
diagrams:
\[
\xymatrix{\ar@{}[dr] 
& H \ox H \ar[rr]^{S\ox\id_H} && H \ox H \ar[dr]^\mu & 
\\
H \ar[ur]^\Dl \ar[dr]_\Dl \ar[rr]^\eps && k \ar[rr]^\eta && H
\\
& H \ox H \ar[rr]_{\id_H\ox S} && H \ox H \ar[ur]_\mu & }
\]
Note how this diagram is invariant under ``dualization''.

\begin{remark} 
Because of the relation
$S(h_{(1)}) h_{(2)} = \eps(h) 1_H = h_{(1)} S(h_{(2)})$, one could
omit $\eps$ in the data structure $(H,\mu,\eta,\Dl,\eps,S)$, since it
comes from the identity $\mu \circ (\id \ox S) \circ \Dl = \eps 1_H
= \mu \circ (S \ox \id) \circ \Dl$.
\end{remark}

\begin{remark} 
If $H$ is a Hopf algebra, any grouplike element is invertible with
$S(x) = x^{-1}$. Indeed, $1_H \eps(x) = \eta\eps(x) = \id_H * S(x)
= \mu(\id_H \ox S)(x \ox x) = x S(x)$, where $\eps(x)$ is nonzero by
counitality: $x = (\id_H \ox \eps)\Dl(x) = x \eps(x)$.
\end{remark}

\begin{example} 
\label{eg:alg-grupo-S}
Back to Example~\ref{eg:alg-grupo}: in the group bialgebra $F[G]$
every element of~$G$ is grouplike; it becomes a Hopf algebra with
$S(g) := g^{-1}$ for every $g \in G$.
\end{example}

\begin{example} 
\label{eg:not-Hopf}
A bialgebra that is not a Hopf algebra is given by $\bC[X]$, as in
Example~\ref{eg:bialgebra}.

In fact, if there were an antipode $S$ for this algebra, then 
$1 = \eta\eps(X) = \mu(S \ox \id)(X \ox X) = S(X) X$ would hold,
giving a contradiction.
\end{example}

\begin{example} 
\label{eg:group-poly}
In Example~\ref{eg:bialg-func}, $\sO(G)$ becomes a commutative Hopf
algebra with antipode $S\phi(g) := \phi(g^{-1})$.

This example and the previous one can be considered as the leading
models to define the Hopf algebra axiomatization.

A more involved example considers a compact Lie group $G$ (in
particular, $G$ could be a finite group, of dimension~$0$). For
$g \in G$, let $R_g$ denote the right-translation map of functions in
$C(G)$, $R_g\phi(h) := \phi(hg)$. We say that $\phi$ is a
\textit{representative function} on~$G$ if the translates
$\set{R_g\phi : g \in G}$ generate a finite-dimensional subspace of
$C(G)$. Such functions form a subalgebra of $C(G)$ -- under pointwise
multiplication -- that we also denote by $\sO(G)$; it is sometimes
called the \textit{polynomial algebra} of~$G$.

It is known that $\sO(G)$ is the vector space generated by the 
\textit{matrix elements} $D^\pi_{\xi\eta}$ defined by
$D^\pi_{\xi\eta}(g) := \braket{\xi}{\pi(g)\eta}$ where $\pi$ is a
(unitary) irreducible representation of~$G$, with 
$\xi,\eta \in \sH_\pi$. 
It is also known that the \textit{algebraic} tensor product of two 
copies of $\sO(G)$ is
\[
\sO(G) \ox \sO(G) \equiv \sO(G \x G)  \word{with}
(\phi \ox \psi)(g,h) := \phi(g)\,\psi(h).
\]
Now $\sO(G)$ is a commutative Hopf algebra. See
Example~\ref{eg:Hopf-alg-duality}.
\end{example}

\begin{example} 
\label{eg:enveloping-Lie-algebra}
Let $\g$ be a finite dimensional Lie algebra with ordered basis
$\{X_1,\dots,X_n\}$ and structure constants given by
$[X_i,X_j] = \sum_k c_{ij}^k X_k$. Its (universal) \textit{enveloping
algebra} $\sU(\g)$ is the algebra with generators%
\footnote{%
It is usual to denote with the same symbols the generators of
$\sU(\g)$, making use of the embedding $\g \hookto \sU(\g)$, since
this embedding is injective.}
$\{x_1,\dots,x_n\}$, subject to the relations 
$x_i x_j - x_j x_i = \sum_k c_{ij}^k x_k$ and no others. By the
Poincaré--Birkhoff--Witt theorem, the ordered monomials
$x_1^{r_1} x_2^{r_2} \cdots x_n^{r_n}$ form a vector space basis for
$\sU(\g)$. The algebra homomorphisms given on the generators by
\[
\Dl x = x \ox 1 + 1 \ox x,  \qquad
\eps(1) = 1, \qquad  \eps(x) = 0,  \qquad
S(x) = -x,
\]
make $\sU(\g)$ a Hopf algebra.

This Hopf algebra is \textit{cocommutative}: if $\tau$ denotes the
\textit{flip map} $\tau(x \ox y) := y \ox x$, then $\tau \Dl = \Dl$.
(Observe that commutativity of an algebra is the condition
$\mu \tau = \mu$.)

Recall that an element $x \in H$ of a bialgebra $H$ is called
\textit{primitive} if $\Dl x = x \ox 1 + 1 \ox x$. Thus, the vector
subspace $\g$ of $\sU(\g)$ consists of primitive elements.
\end{example}

\begin{example} 
\label{eg:tensor-product-extension}
Let $A$ be an algebra and $H$ be a Hopf algebra. From
Example~\ref{eg:algebra-times-coalgebra} we know that $A \ox H$ is a
coalgebra. The coaction $\dl^A = \id_A \ox \Dl$ from $A \ox H$ to
$A \ox H \ox H$ makes $A \ox H$ an $H$-comodule.

Also, $(A \ox H, \id_A \ox \Dl)$ is an $H$-comodule algebra. Recall
that in this case,
\[
\Dl_{A\ox H\ox H}(a \ox h \ox h') 
= a \ox h_{(1)} \ox h_{(2)} \ox a \ox {h'}_{\!(1)} \ox {h'}_{\!(2)},
\]
and $\eps_{A\ox H\ox H}(a \ox h \ox h') = \eps(h) \eps(h')$, extended
by linearity. Because
\begin{align*}
\eps_{A\ox H\ox H}\, \dl^A(a \ox h) 
&= \eps_{A\ox H\ox H}(a \ox h_{(1)} \ox h_{(2)})
\\
&= \eps(h_{(1)}) \eps(h_{(2)}) = \eps(h_{(1)}\,\eps(h_{(2)}))
= \eps_{A\ox H}(a \ox h),
\\
(\dl^A \ox \dl^A) \Dl_{A\ox H}(a \ox h)
&= (\dl^A \ox \dl^A)(a \ox h_{(1)} \ox a \ox h_{(2)})
\\
&= a \ox h_{(1)(1)} \ox h_{(1)(2)} \ox a \ox h_{(2)(1)} \ox h_{(2)(2)}
\\
&= \Dl_{A\ox H\ox H}(a \ox h_{(1)} \ox h_{(2)})
= \Dl_{A\ox H\ox H}\, \dl^A(a \ox h),
\end{align*} 
$A \ox H$ is an $H$-comodule algebra.

Now,
\[
(A \ox H)^{\co H} = \set{a \ox h \in A \ox H
: a \ox h_{(1)} \ox h_{(2)} = a \ox h \ox 1_H} \equiv A.
\]
The last equivalence follows as in Example~\ref{eg:H-over-H} where
$\Dl(h) = h \ox 1_H$ implies $h = \eps(h) 1_H$.

With $(A \ox H) \ox_A (A \ox H) \equiv A \ox H \ox H$, the canonical
map is given by
\[
\can \: A \ox H \ox H \to A \ox H \ox H
: a \ox h \ox h' \mapsto a \ox h {h'}_{\!(1)} \ox {h'}_{\!(2)},
\]
with inverse
\[
\can^{-1} \: A \ox H \ox H \to A \ox H \ox H
: a \ox h \ox h' \mapsto a \ox h S({h'}_{\!(1)}) \ox {h'}_{\!(2)}.
\]
As a result, $A \ox H$ is a Hopf--Galois extension of~$A$.
\end{example}

The following is part of Proposition~4.0.1 in \cite{Sweedler}; see
also \cite{Majid} and \cite{Underwood}.

\begin{proposition} 
\label{pr:properties-of-S}
The antipode $S$ of a Hopf algebra is unique and satisfies, for all
$h,h' \in H$:
\begin{enumerate}
\item 
$S(1_H) = 1_H$;
\item 
$S(h h') = S(h') S(h)$;
\item 
$\eps(S(h)) = \eps(h)$;
\item 
$(S \ox S)\Dl h = \tau \Dl S(h)$.
\end{enumerate}
\end{proposition}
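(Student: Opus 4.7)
The plan is to exploit convolution in appropriate $\Hom$-algebras. The unifying observation is that whenever $C$ is a coalgebra and $A$ is an algebra, the space $\Hom(C,A)$ is itself a unital algebra under convolution, with unit $\eta_A \eps_C$; inverses in any unital algebra are unique when they exist. Applied to $\Hom(H,H)$ itself, this gives uniqueness of $S$ immediately, since any antipode is a two-sided convolution inverse of $\id_H$.

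For (1), I apply $\mu(\id_H \ox S)\Dl = \eta\eps$ to $h = 1_H$: since $\Dl$ is an algebra homomorphism we have $\Dl(1_H) = 1_H \ox 1_H$ and $\eps(1_H) = 1$, so the identity collapses at once to $1_H \cdot S(1_H) = 1_H$. For (3), I apply the algebra homomorphism $\eps$ to both sides of $S(h_{(1)}) h_{(2)} = \eps(h) 1_H$, obtaining $\eps(S(h_{(1)}))\,\eps(h_{(2)}) = \eps(h)$; pulling the scalar $\eps(h_{(2)})$ inside $S$ by $k$-linearity and applying counitality $\eps(h_{(2)}) h_{(1)} = h$ yields $\eps(S(h)) = \eps(h)$.

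For (2), I switch to $\Hom(H \ox H, H)$, where $H \ox H$ carries its tensor-product coalgebra structure and the convolution unit sends $h \ox h'$ to $\eps(h)\eps(h')\,1_H$. Define $M(h \ox h') := hh'$, $N_1(h \ox h') := S(h') S(h)$, and $N_2(h \ox h') := S(hh')$; the goal is $N_1 = N_2$. A Sweedler computation shows $N_1 * M = \eta\eps$: the inner factor $S(h_{(1)}) h_{(2)}$ collapses to $\eps(h) 1_H$ via the antipode axiom, and a counitality step on $h'$ finishes. Likewise $M * N_2 = \eta\eps$ follows from $\Dl(hh') = (hh')_{(1)} \ox (hh')_{(2)}$, since the convolution then reduces to $(hh')_{(1)} S((hh')_{(2)}) = \eps(hh')\,1_H = \eps(h)\eps(h')\,1_H$. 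So $N_1$ is a left and $N_2$ a right convolution inverse of $M$, whence $N_1 = N_2$.

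For (4), I mirror the strategy in $\Hom(H, H \ox H)$, now with $H \ox H$ carrying the tensor-product algebra structure, so that $(f*g)(h) = f(h_{(1)}) \cdot g(h_{(2)})$. Define $P(h) := \Dl S(h)$ and $Q(h) := S(h_{(2)}) \ox S(h_{(1)})$; the identity $(S \ox S)\Dl = \tau \Dl S$ is exactly $P = Q$. The equality $\Dl * P = \eta\eps$ is immediate from $\Dl$ being an algebra homomorphism: $\Dl(h_{(1)}) \cdot \Dl S(h_{(2)}) = \Dl(h_{(1)} S(h_{(2)})) = \Dl(\eps(h) 1_H) = \eps(h)(1_H \ox 1_H)$. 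The equality $Q * \Dl = \eta\eps$ reads $S(h_{(1)(2)}) h_{(2)(1)} \ox S(h_{(1)(1)}) h_{(2)(2)} = \eps(h)(1_H \ox 1_H)$; I would reshape the four Sweedler factors of $\Dl^3 h$ via coassociativity so that one inner pair takes the form $S(x_{(1)}) x_{(2)} = \eps(x)\,1_H$, collapse it, and then use counitality followed by the outer antipode relation on what remains. Once both convolution identities hold, associativity of $*$ yields $P = \eta\eps * P = (Q * \Dl) * P = Q * (\Dl * P) = Q * \eta\eps = Q$. The main obstacle is exactly the bookkeeping in this last step: tracking how the various splittings of $\Dl^3 h$ correspond under coassociativity, so that the antipode axiom can be applied to the right adjacent pair and then counitality to what is left.
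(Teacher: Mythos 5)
Your proof is correct and follows essentially the same route as the paper: uniqueness and parts (b), (d) via two-sided convolution inverses of $\mu$ in $\Hom(H\ox H,H)$ and of $\Dl$ in $\Hom(H,H\ox H)$, with (a) and (c) by direct evaluation. The only differences are immaterial (you exhibit the inverses on the opposite sides from the paper, and in (b) the final collapse on $h'$ is the antipode axiom rather than counitality, a slip of wording only).
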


\begin{remark}
The first two properties tell us that $S$ is a unital algebra
antihomomorphism. The third and fourth properties tell us that $S$ is
a coalgebra antihomomorphism. We review the proof since it affords us
the opportunity to learn techniques about the role of the antipode as
a substitute for an inverse.
\end{remark}

\begin{proof}
Using the counit, $h = h_{(1)} \eps(h_{(2)}) = \eps(h_{(1)}) h_{(2)}$
for all $h$ in $H$. Thus~(a) follows easily, with $1 = 1_H$, 
$1 = 1_{(1)} S(1_{(2)}) = 1 S(1) = S(1)$.

Also, if $S$ and $S'$ are both antipodes for $H$, then
$$
S'(h) = S'(h_{(1)} \eps(h_{(2)})) = S'(h_{(1)}) \eps(h_{(2)}) 
= S'(h_{(1)}) h_{(2)(1)} S(h_{(2)(2)}).
$$
Similarly,
$$
S(h) =  S(\eps(h_{(1)})h_{(2)}) = \eps(h_{(1)}) S(h_{(2)})
= S'(h_{(1)(1)}) h_{(1)(2)} S(h_{(2)}).
$$
By coassociativity of $\Dl$, both right-hand sides might be written as
\[
\mu((S' \ox \id_H \ox S)(h_{(1)} \ox h_{(2)(1)} \ox h_{(2)(2)}))
= \mu((S' \ox \id_H \ox S)(h_{(1)(1)} \ox h_{(1)(2)} \ox h_{(2)})),
\]
where $\mu \: H \ox H \ox H \to H$ makes sense thanks to the
associativity of~$\mu$. Thus the antipode is unique. We also conclude
that $S(h_{(1)}) h_{(2)(1)}S(h_{(2)(2)}) = S(h) 
= S(h_{(1)(1)})h_{(1)(2)} S(h_{(2)})$ for all $h \in H$, and write it
as $S(h) = S(h_{(1)}) h_{(2)} S(h_{(3)})$.

For~(c) we find that 
\[
\eps(Sh) = \eps(Sh_{(1)}\eps(h_{(2)})) 
= \eps(Sh_{(1)})\eps(h_{(2)}) = \eps((Sh_{(1)}) h_{(2)}) 
= \eps(h) \eps(1_H) = \eps(h).
\]

For~(b), we consider the convolution product on $\Hom(H \ox H, H)$
with unit $\eta\eps = \eta_H \eps_{H\ox H}$ and 
$\eps_{H\ox H}(h \ox h') := \eps(h) \eps(h')$. Recall that on the
tensor product of coalgebras $H \ox H$, $\Dl_\ox = \Dl_{H\ox H}$ is
given by $\Dl_\ox(h \ox h') 
= h_{(1)} \ox {h'}_{\!(1)} \ox h_{(2)} \ox {h'}_{\!(2)}$.

Consider as in \cite{Sweedler} the homomorphisms
$P,N \: H \ox H \to H$ given by $P(h \ox h') := S(hh')$ and
$N(h \ox h') := S(h') S(h)$. We shall show that
$P * \mu = \eta\eps = \mu * N$. In this way, 
$P = P * \eta\eps = P * \mu * N = \eta\eps * N = N$.

We observe that 
\begin{align*}
P * \mu(h \ox h') &= \mu \circ (P \ox \mu) \circ \Dl_\ox(h \ox h')
= \mu(P \ox \mu)(h_{(1)} \ox {h'}_{\!(1)} \ox h_{(2)} \ox {h'}_{\!(2)})
\\
&= \mu(S(h_{(1)} {h'}_{\!(1)}) \ox h_{(2)} {h'}_{\!(2)}) 
= S(h_{(1)} {h'}_{\!(1)}) h_{(2)}{h'}_{\!(2)} 
= S((hh')_{(1)}) (hh')_{(2)}
\\
&= \eps(hh') 1_H = \eps(h) \eps(h') 1_H.
\\
\shortintertext{and}
\mu * N(h \ox h')
&= \mu \circ (\mu \ox N) \circ \Dl_\ox (h \ox h')
= \mu(\mu \ox N)(h_{(1)} \ox {h'}_{\!(1)} \ox h_{(2)} \ox {h'}_{\!(2)})
\\
&= \mu(h_{(1)} {h'}_{\!(1)} \ox S({h'}_{\!(2)}) S(h_{(2)}))
= h_{(1)} {h'}_{\!(1)} S({h'}_{\!(2)}) S(h_{(2)})
\\
&= h_{(1)} \eps(h') S(h_{(2)}) = h_{(1)} S(h_{(2)}) \eps(h')
= \eps(h) \eps(h') 1_H.
\end{align*}
Hence, $P * \mu(h \ox h') = \eta\eps(h \ox h') = \mu * N(h \ox h')$,
as claimed.

For~(d), we consider the convolution product on $\Hom(H, H \ox H)$
with unit $\eta\eps = \eta_{H \ox H} \eps_H$ and
$\eta_{H \ox H}(h \ox h') = \eta(h) \eta(h')$. Remember that on
$H \ox H$, the multiplication $\mu_\ox = \mu_{H\ox H}$ is given by 
$\mu_\ox((g \ox g') \ox (h \ox h')) = gh \ox g'h'$.

Consider, as suggested in \cite{Sweedler}, the homomorphisms
$P',N' \: H \to H \ox H$ given by
$P' = \Dl \circ S : h \mapsto (Sh)_{(1)} \ox (Sh)_{(2)}$; and
$N' = \tau \circ(S \ox S)\circ \Dl 
: h \mapsto S(h_{(2)}) \ox S(h_{(1)})$. We shall show that
$P' * \Dl = \eta\eps = \Dl * N'$, and therefore
$P' = P' * \eta\eps = P' * \Dl * N' = \eta\eps * N' = N'$. Now,
\begin{align*}
\Dl * N'(h)
&= \mu_\ox \circ (\Dl \ox N') \circ \Dl(h)
= \mu_\ox (\Dl \ox N') (h_{(1)} \ox h_{(2)})
= \mu_\ox \bigl( \Dl(h_{(1)}) \ox N'(h_{(2)}) \bigr)
\\
&= \mu_\ox \bigl( h_{(1)} \ox h_{(2)} \ox S(h_{(4)})
\ox S(h_{(3)})\bigr) = (h_{(1)} S(h_{(4)})) \ox (h_{(2)} S(h_{(3)}))
\\ 
&= h_{(1)} S(h_{(3)}) \ox \eps(h_{(2)}) 1
= h_{(1)} \eps(h_{(2)}) S(h_{(3)}) \ox 1
\\
&= h_{(1)} S(h_{(2)}) \ox 1 = \eps(h) 1 \ox 1  = \eta\eps(h),
\\
\shortintertext{and}
P' * \Dl(h)
&= \mu_\ox \circ (P' \ox \Dl) \circ \Dl(h)
= \mu_\ox (P' \ox \Dl) (h_{(1)} \ox h_{(2)})
= \mu_\ox \bigl( P'(h_{(1)}) \ox \Dl(h_{(2)}) \bigr)
\\
&= \mu_\ox \bigl( S(h_{(1)})_{(1)} \ox S(h_{(2)})_{(2)}
\ox h_{(3)} \ox h_{(4)} \bigr)
\\
&= (S(h_{(1)})_{(1)} h_{(3)}) \ox (S(h_{(2)})_{(2)} h_{(4)})
= (S(h_{(1)}) h_{(2)})_{(1)} \ox (S(h_{(2)}) h_{(2)})_{(2)} 
\\
&= (\eps(h)1)_{(1)} \ox (\eps(h)1)_{(2)} 
= \eps(h) 1 \ox 1 = \eta\eps(h),
\end{align*}
which completes the proof.
\end{proof}

\begin{remark}
In general it is not required that $S^2 = \id_H$, but this does happen
if $H$ is commutative or cocommutative: see Proposition~4.0.1(6)
of~\cite{Sweedler} or Theorem~III.3.4.(c) of~\cite{KasselBook}.

The antipode need not be invertible. However, one can use an
invertible antipode $S$ to convert a right action of a Hopf algebra
$H$ on an algebra~$A$ into a left action, and vice versa (see
\cite{GoverZhang} and~\cite{Dabrowskietal}), through the relation
$h \lt a := a \rt S^{-1}h$, employing the standard notations
$h \lt a = \rho(h \ox a)$ and $a \rt h = \rho(a \ox h)$, for left and
right actions respectively. Indeed,
\[
h \lt (h' \lt a) = (a \rt S^{-1}(h')) \rt S^{-1}(h)
= a \rt (S^{-1}(h') S^{-1}(h)) = a \rt S^{-1}(h h') = (h h') \lt a,
\]
and $1_H \lt a = a \rt 1_H = a$, for every $a \in A$ and $h,h' \in H$.
\end{remark}

\medskip

Two Hopf algebras $H$ and $H'$ are said to be \textit{in duality} if
the corresponding bialgebras are in duality, and furthermore
$\duo{S(h)}{h'} = \duo{h}{S'(h')}$ for all $h \in H$, $h' \in H'$.

\begin{example} 
\label{eg:duality-pair-H} 
Back to Example~\ref{eg:duality-pair}, for a finite group $G$ and a
field~$F$: both $\sO(G)$ and $F[G]$ are Hopf algebras with 
respective antipodes $S_{\sO(G)}\phi(g) := \phi(g^{-1})$ and
$S_{F[G]}(g) = g^{-1}$ (see Examples~\ref{eg:group-poly} and
\ref{eg:alg-grupo-S}), which satisfy
$\duo{S_{\sO(G)}(e_h)}{g} = e_h(g^{-1}) = \dl_{h,g^{-1}}
= \duo{e_h}{S_{F[G]}(g)}$, for all $g,h \in G$. It follows that they
are also in duality as Hopf algebras.
\end{example}

\begin{example} 
\label{eg:Hopf-alg-duality}
Let $G$ be a compact Lie group and $\g$ its Lie algebra. Let
$H' = \sO(G)$ be its Hopf algebra of representative functions on~$G$
with structure $\Dl_{\sO(G)} f(g,h) := f(gh)$,
$\eps_{\sO(G)}(f) := f(1)$, $S_{\sO(G)}f(g) := f(g^{-1})$, as in the
finite-dimensional case. If $H = \sU(\g)$, with
$\Dl_{\sU(\g)} X = X \ox 1_{\sU(\g)} + 1_{\sU(\g)} \ox X$,
$\eps_{\sU(\g)}(1) = 1$, $\eps_{\sU(\g)}(X) = 0$ and
$S_{\sU(\g)}(X) = -X$, then both algebras are in duality under
$\duo{X}{f} := Xf(1_G)$ when $X \in \g$,%
\footnote{%
Here $X f(1_G) = \frac{d}{dt}\bigr|_{t=0} f(\exp tX)$.}
extended to $\sU(\g)$ by setting $\duo{XY}{f} := X(Yf)(1_G)$
(see~\cite{JoeHNCG}). We must check that five relations hold. Firstly,
\begin{align*}
\duo{XY}{f} &= X(Yf)(1_G) 
= \frac{d}{dt}\biggr|_{t=0} (Yf)(\exp tX)
= \frac{d}{dt}\biggr|_{t=0} \frac{d}{ds}\biggr|_{s=0}
f(\exp tX \exp sY)
\\
&= \frac{d}{dt}\biggr|_{t=0} \frac{d}{ds}\biggr|_{s=0}
(f \circ \mu)(\exp tX \ox \exp sY) 
= \duo{X \ox Y}{f \circ \mu}_\ox
= \duo{X \ox Y}{\Dl_{\sO(G)}(f)}_\ox.
\end{align*}
Secondly, 
\begin{align*}
\duo{X}{f h} &= X(fh)(1_G) = \bigl( (Xf)h + f(Xh) \bigr)(1_G)
\\
&= Xf(1_G)\, h(1_G) + f(1_G)\, Xh(1_G)
\\
&= \duo{X \ox 1_{\sU(\g)}}{f \ox h}_\ox  
+ \duo{1_{\sU(\g)} \ox X}{f \ox h}_\ox 
\\
&= \duo{X \ox 1_{\sU(\g)} + 1_{\sU(\g)} \ox X}{f \ox h}_\ox
= \duo{\Dl_{\sU(\g)} X}{f \ox h}_\ox.
\end{align*}
Thirdly,
$\eps_{\sO(G)}(f) = f(1_G) = \duo{1_{\sU(\g)}}{f}$.
Fourthly, 
$\eps_{\sU(\g)}(X) = 0 = X(1_{\sO(G)}) = \duo{X}{1_{\sO(G)}}$.
Finally,
\begin{align*}
\duo{S_{\sU(\g)}(X)}{f}
&= \duo{-X }{f} = -X(f)(1_G) = \frac{d}{dt}\biggr|_{t=0} f(\exp -tX)
\\
&= \duo{X}{f \circ [g \mapsto g^{-1}]} =  \duo{X}{S_{\sO(G)}(f)}
\end{align*}
completes the assertion.
\end{example}

A \textit{morphism of Hopf algebras} $f \: H \to H'$ 
is a morphism of bialgebras
\[
f \circ \mu = \mu' \circ (f \ox f), \quad
f \circ \eta = \eta', \quad 
(f \ox f) \circ \Dl = \Dl' \circ f, \quad
\eps = \eps' \circ f,
\]
which further satisfies $S' \circ f = f \circ S$.

\begin{remark}
In the finite-dimensional case, the \textit{linear dual}
$H^* := \Hom(H,k)$ of $H$ becomes a Hopf algebra with the following
operations:
\begin{alignat*}{2}
\mu^*(\al \ox \bt)(h) &:= \al \ox \bt(\Dl(h)), \qquad &
\eps^*(\al) &:= \al(1_H),
\\
\Dl^*(\al)(h \ox h') &:= \al (hh'), &
S^*(\al) &:= \al \circ S.
\end{alignat*}
In the infinite-dimensional case, more caution is required:
see\cite{Sweedler}. The dual must be restricted to
$H^\circ := \set{\al \in H^*
: \al(I) = 0 \mot{for some ideal $I$ with} \dim(H/I) < \infty}$. Note
that $H^\circ = H^*$ in the finite-dimensional case.

If two Hopf algebras $H$ and $H'$ are in duality, then the map
$H \to H^\circ$ given by $h \mapsto \duo{h}{-}$ defines an isomorphism
of Hopf algebras.%
\footnote{%
A bijective homomorphism of Hopf algebras (bialgebra homomorphism
that intertwines the antipodes).}
\end{remark}

In Lemma~\ref{lm:antipode-vs-can} it was shown that the existence of
the antipode implies the invertibility of the canonical map, from the
explicit formula
$\can^{-1}(h \ox h') := h S({h'}_{\!(1)}) \ox {h'}_{\!(2)}$. It is now
time to show that if the canonical map in $\End(H \ox H)$ is
invertible, then $H$ is a Hopf algebra with explicit antipode
$S := (\id_H \ox \eps) \circ \can_H^{-1} \circ (1_H \ox \id_H)$.

\begin{proposition} 
\label{pr:Hopf-and-can-map}
A bialgebra $H$ is a Hopf algebra if and only if the canonical map
$\can_H$ is invertible.
\end{proposition}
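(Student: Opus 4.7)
The forward direction ($H$ Hopf $\Rightarrow$ $\can_H$ bijective) is Lemma~\ref{lm:antipode-vs-can}. For the converse, the plan is to verify that the candidate antipode
\[
S := (\id_H \ox \eps) \circ \can_H^{-1} \circ (\eta_H \ox \id_H),
\]
announced just before the Proposition, actually satisfies $S * \id_H = \eta\eps = \id_H * S$. I would write $\phi := \can_H^{-1} \circ (\eta_H \ox \id_H) \: H \to H \ox H$, and denote $\phi(h) = \phi^1(h) \ox \phi^2(h)$ in Sweedler-like notation, so that $\can_H \phi(h) = 1_H \ox h$ reads
\[
\phi^1(h)\, \phi^2(h)_{(1)} \ox \phi^2(h)_{(2)} = 1_H \ox h,
\]
and $S(h) = \phi^1(h)\, \eps(\phi^2(h))$.

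The key first step is the identification $\phi(h) = S(h_{(1)}) \ox h_{(2)}$. The paper has already noted that $\can_H$ is right $H$-colinear for the coactions $\id_H \ox \Dl$, hence so is $\can_H^{-1}$; and $h \mapsto 1_H \ox h$ is trivially right $H$-colinear since $\Dl(1_H) = 1_H \ox 1_H$. Consequently $\phi$ is right $H$-colinear, which reads
\[
\phi^1(h_{(1)}) \ox \phi^2(h_{(1)}) \ox h_{(2)}
= \phi^1(h) \ox \phi^2(h)_{(1)} \ox \phi^2(h)_{(2)}.
\]
Applying $\id_H \ox \eps \ox \id_H$ and invoking counitality on each side delivers $S(h_{(1)}) \ox h_{(2)} = \phi(h)$.

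With this in hand, both antipode identities fall out symmetrically from the two sides of $\can_H \can_H^{-1} = \id = \can_H^{-1} \can_H$. Substituting $\phi(h) = S(h_{(1)}) \ox h_{(2)}$ into $\can_H \phi(h) = 1_H \ox h$ yields $S(h_{(1)}) h_{(2)} \ox h_{(3)} = 1_H \ox h$; applying $\id_H \ox \eps$ and collapsing via counitality gives $S * \id_H = \eta\eps$. For the opposite identity, left $H$-linearity of $\can_H$ passes to $\can_H^{-1}$, giving $\can_H^{-1}(a \ox b) = a\, \phi(b)$, hence $\can_H^{-1} \can_H(h \ox h') = h\, h'_{(1)}\, \phi(h'_{(2)}) = h \ox h'$. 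Setting $h = 1_H$ and substituting $\phi(h'_{(2)}) = S(h'_{(2)}) \ox h'_{(3)}$ (via coassociativity of $\Dl$) produces $h'_{(1)} S(h'_{(2)}) \ox h'_{(3)} = 1_H \ox h'$; applying $\id_H \ox \eps$ with counitality yields $\id_H * S = \eta\eps$. The one genuine obstacle is the colinearity identification $\phi(h) = S(h_{(1)}) \ox h_{(2)}$; once this bridge between the formula for $S$ and the coalgebra structure is available, the rest is mechanical.
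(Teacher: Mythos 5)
Your proof is correct, but it takes a genuinely different route from the paper's. The paper proves the converse via Koppinen's lemma (Lemma~\ref{lm:Koppinen}): the map $T(\psi) = (\id_H \ox \eps)\,\psi\,(1_H \ox \id_H)$ is an algebra \emph{anti-isomorphism} from $(\End_H^H(H \ox H), \circ)$ to $(\End(H), *)$, so once one checks that $\can_H \in \End_H^H(H \ox H)$ with $T(\can_H) = \id_H$, the element $S := T(\can_H^{-1})$ is automatically a two-sided convolution inverse of $\id_H$, i.e., an antipode. You never set up this anti-isomorphism; instead you use the already-established left $H$-linearity and right $H$-colinearity of $\can_H$ (which pass to $\can_H^{-1}$ by conjugating with bijections) to derive the explicit formula $\can_H^{-1}(h \ox h') = h\,S(h'_{(1)}) \ox h'_{(2)}$ directly, and then read off the two antipode identities from the two composition identities $\can_H \can_H^{-1} = \id$ and $\can_H^{-1}\can_H = \id$. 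Your colinearity step is sound: applying $\id_H \ox \eps \ox \id_H$ to the colinearity of $\phi = \can_H^{-1}(\eta_H \ox \id_H)$ and using counitality on each side does give $\phi(h) = S(h_{(1)}) \ox h_{(2)}$, and the rest follows as you say. The two proofs are secretly the same mechanism --- your identification of $\phi$ is precisely the verification that $\can_H^{-1} = R(S)$ for Koppinen's inverse map $R$ --- but yours is more self-contained and elementary, trading the general lemma for a short direct computation, whereas the paper's route buys a reusable structural fact (composition in $\End_H^H(H \ox H)$ corresponds to convolution in $\End(H)$) that it invokes again later, e.g.\ when discussing why the antipode is a property rather than extra structure in Section~\ref{sec:Qgroups-HG-ext}.
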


We shall base the proof on Lemma~\ref{lm:Koppinen}
from~\cite{Koppinen}. For an algebra~$A$ and a coalgebra~$C$ over a
field~$F$, denote by $\End_A^C(A \ox C)$ the subalgebra of elements
$\psi \in \End(A \ox C)$ that commute with $\mu_A$ on the left and
with $\Dl_C$ on the right, as follows:
\begin{align*}
(\mu_A \ox \id_C)(\id_A \ox \psi) 
&= \psi(\mu_A \ox \id_C) : A \ox A \ox C \to A \ox C,
\\
(\psi \ox \id_C)(\id_A \ox \Dl_C) 
&= (\id_A \ox \Dl_C) \psi : A \ox C \to A \ox C \ox C.
\end{align*}
Since the first property may be written as
$a'\psi (a \ox c) = \psi(a' a \ox c)$ for $a,a' \in A$, $c \in C$,
each $\psi$ is determined by its values on $1_A \ox C$. Indeed,
$\psi(a \ox c) = a \psi(1_A \ox c)$.

The second
property indicates that
$\psi(a \ox c_{(1)}) \ox c_{(2)} = (\id_A \ox \Dl_C)(\psi(a \ox c))$,
for every $a \in A$, $c \in C$. That $\End_A^C(A \ox C)$ is a
subalgebra follows from this: if $a,b \in A$, $c \in C$ with
$\psi'(a \ox c) = a' \ox c'$, then
\begin{align*}
\MoveEqLeft{
(\mu_A \ox \id_C)(\id_A \ox (\psi \circ \psi'))(b \ox a \ox c)
= (\mu_A \ox \id_C)(\id_A \ox \psi) (b \ox \psi'(a \ox c))}
\\
&= \psi(\mu_A \ox \id_C)(b \ox a' \ox c')  
= \psi(b a' \ox c') = \psi(b\,\psi'(a \ox c)) 
\\
&= \psi(\psi'(b a \ox c))
= (\psi \circ \psi')(\mu_A \ox \id_C)(b \ox a \ox c),
\\
\MoveEqLeft{
((\psi \circ \psi') \ox \id_C)(\id_A \ox \Dl_C)(a \ox c)
= ((\psi \circ \psi') \ox \id_C)(a \ox c_{(1)} \ox c_{(2)})}
\\
&= (\psi \circ \psi')(a \ox c_{(1)}) \ox c_{(2)}
= (\psi \ox \id_C)(\psi' \ox \id_C)(\id_A \ox \Dl_C)(a \ox c)
\\
&= (\psi \ox \id_C)(\id_A \ox \Dl_C)\psi'(a \ox c) 
= (\id_A \ox \Dl_C)(\psi \circ \psi')(a \ox c).
\end{align*}

\begin{lemma}[Koppinen] 
\label{lm:Koppinen}
If $A$ is an algebra and $C$ is a coalgebra over a field $F$, then
there is an anti-isomorphism of algebras
$T \: \End_A^C(A \ox C) \to \Hom(A,C)$. 
\end{lemma}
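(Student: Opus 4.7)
The plan is to establish the bijection first by producing an explicit inverse, then verify the anti-multiplicative property. (Note that the natural convolution algebra structure, as flagged in the paper's earlier footnote, lives on $\Hom(C,A)$, so I shall work with that convention; the argument is a straightforward Sweedler-index calculation once $T$ and $T^{-1}$ are chosen.) Define
\[
T(\psi)(c) := (\id_A \ox \eps_C)\,\psi(1_A \ox c),
\]
and in the reverse direction, for $f \in \Hom(C,A)$ set
\[
\Psi_f(a \ox c) := a\,f(c_{(1)}) \ox c_{(2)}.
\]
A direct check shows $\Psi_f$ is left $A$-linear (from the factorization $a \cdot -$ on the first tensorand) and right $C$-colinear (by coassociativity of $\Dl_C$ applied to $c_{(2)}$), so $\Psi_f \in \End_A^C(A \ox C)$.

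The heart of the proof is then that $T$ and $\Psi_{(-)}$ are mutually inverse. One direction, $T(\Psi_f) = f$, is immediate from counitality: $(\id \ox \eps)(f(c_{(1)}) \ox c_{(2)}) = f(c_{(1)})\eps(c_{(2)}) = f(c)$. The other direction $\Psi_{T(\psi)} = \psi$ is the step that really uses the hypotheses: writing $\psi(1_A \ox c) = \sum x_i \ox y_i$, the right $C$-colinearity says $\sum x_i \ox y_{i(1)} \ox y_{i(2)} = \psi(1_A \ox c_{(1)}) \ox c_{(2)}$, and applying $\id_A \ox \eps_C \ox \id_C$ to both sides recovers $\psi(1_A \ox c) = \sum T(\psi)(c_{(1)}) \ox c_{(2)}$. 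Left $A$-linearity then promotes this from $1_A \ox c$ to arbitrary $a \ox c$, yielding $\psi(a \ox c) = a\,T(\psi)(c_{(1)}) \ox c_{(2)} = \Psi_{T(\psi)}(a \ox c)$.

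For the anti-multiplicative property, take $\psi_1,\psi_2 \in \End_A^C(A \ox C)$ with $f_i := T(\psi_i)$. Using the explicit form $\psi_i(1 \ox c) = f_i(c_{(1)}) \ox c_{(2)}$ and left $A$-linearity of $\psi_1$, one computes
\[
(\psi_1 \circ \psi_2)(1 \ox c) = \psi_1\bigl( f_2(c_{(1)}) \ox c_{(2)} \bigr)
= f_2(c_{(1)})\,f_1(c_{(2)(1)}) \ox c_{(2)(2)}
= f_2(c_{(1)}) f_1(c_{(2)}) \ox c_{(3)},
\]
so applying $\id \ox \eps$ gives $T(\psi_1 \circ \psi_2)(c) = f_2(c_{(1)}) f_1(c_{(2)}) = (f_2 * f_1)(c) = (T(\psi_2) * T(\psi_1))(c)$. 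The identity $\id_{A \ox C}$ corresponds under $T$ to $\eta\eps$, the convolution unit, completing the verification.

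The one step I would flag as the main obstacle is the recovery $\psi = \Psi_{T(\psi)}$: it is the only place where the full force of the right $C$-colinearity condition is needed, and beginners often try (and fail) to prove it using only counitality. The trick is the intermediate identity $\psi(1 \ox c_{(1)}) \ox c_{(2)} = (\id \ox \Dl)\psi(1 \ox c)$, which must be processed with $\eps$ in the middle slot rather than the first or last. Everything else — well-definedness, the inverse check on $f$, and the anti-homomorphism computation — is routine bookkeeping in Sweedler notation.
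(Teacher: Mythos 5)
Your proof is correct and follows essentially the same route as the paper's: the same map $T(\psi) = (\id_A \ox \eps_C)\psi(1_A \ox \mathord{-})$, the same inverse $f \mapsto [a \ox c \mapsto a\,f(c_{(1)}) \ox c_{(2)}]$ (the paper's $R$), the same use of colinearity to recover $\psi$ from $T(\psi)$, and the same convolution computation for anti-multiplicativity (which you carry out directly on $T$ where the paper does it on $R$ and transfers). You also correctly identified that the codomain must be $\Hom(C,A)$ --- the convolution algebra --- rather than the $\Hom(A,C)$ appearing in the statement, which is consistent with how the paper's own proof proceeds.
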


Because on $\Hom(A,C)$ the product is given by convolution, $T$ should
satisfy $T(\psi \circ \psi') = T(\psi') * T (\psi)$ for
$\psi,\psi' \in \End_A^C(A \ox C)$.

\begin{proof}
For each $\phi \in \Hom(C,A)$, define $R\phi \in \End(A \ox C)$ by
$R\phi(a \ox c) := a \phi(c_{(1)}) \ox c_{(2)}$. That
$R\phi \in \End_A^C(A,C)$ follows from
\begin{align*}
\MoveEqLeft{
(\mu_A \ox \id_C)(\id_A \ox R\phi)(b \ox a \ox c)
= (\mu_A \ox \id_C)(b \ox a \phi(c_{(1)}) \ox c_{(2)})}
\\
&= b a \phi(c_{(1)}) \ox c_{(2)} = R\phi(b a \ox c)
= R\phi(\mu_A \ox \id_C)(b \ox a \ox c),
\end{align*}
and
\begin{align*}
\MoveEqLeft{
(R\phi \ox \id_C) (\id_A \ox \Dl_C) (a \ox c)
= (R\phi \ox \id_C) (a \ox c_{(1)} \ox c_{(2)})}
= a \phi(c_{(1)}) \ox c_{(2)} \ox c_{(3)}
\\
&= (\id_A \ox \Dl_C) (a \phi(c_{(1)}) \ox c_{(2)})
= (\id_A \ox \Dl_C) R\phi(a \ox c).
\end{align*}
Note also that, for every $c \in C$, since
$R\phi(1_A \ox c) = \phi(c_{(1)}) \ox c_{(2)}$:
\begin{align*}
(\id_A \ox \eps) R\phi (1_A \ox \id_C)(c) 
&= (\id_A \ox \eps) R\phi (1_A \ox c)
\\
&= \phi(c_{(1)})\eps(c_{(2)}) = \phi(c_{(1)}\eps(c_{(2)})) = \phi(c).
\end{align*}

Noe define a linear map $T \: \End_A^C(A \ox C) \to \Hom(C,A)$ by
$T(\psi) := (\id_A \ox \eps)\psi(1_A \ox \id_C)$, so that
$T(R\phi) = \phi$. Composing $R$ and $T$, we deduce that
\begin{align*}
R \circ T(\psi)(a \ox c)
& = a T(\psi)(c_{(1)}) \ox c_{(2)}
= a (\id_A \ox \eps)\psi(1_A \ox c_{(1)}) \ox c_{(2)}
\\
&= (\id_A \ox \eps)\bigl( a\psi (1_A \ox c_{(1)}) \bigr) \ox c_{(2)}
\\
&= (\id_A \ox \eps)\bigl( \psi (a \ox c_{(1)}) \bigr) \ox c_{(2)}
= (\id_A \ox \eps \ox \id_C)(\psi(a \ox c_{(1)}) \ox c_{(2)})
\\
&= (\id_A \ox \eps \ox \id_C)(\psi \ox \id_C)(a\ox c_{(1)}\ox c_{(2)})
\\
&= (\id_A \ox \eps \ox \id_C)(\psi \ox \id_C)(\id_A \ox \Dl)(a \ox c)
\\
&= (\id_A \ox \eps \ox \id_C)(\id_A \ox \Dl)(\psi(a \ox c))
= \psi(a \ox c),
\end{align*}
because $(\eps \ox \id_C)\Dl_C = \id_C$. As a result, $T$ is a linear
bijection with inverse~$R$.

Now, if $\phi,\phi' \in \Hom(C,A)$, then
\begin{align*}
R\phi \bigl( R\phi'(a \ox c) \bigr)
&= R\phi\bigl( a \phi'(c_{(1)})\ox c_{(2)}) \bigr)
= a R\phi\bigl( \phi'(c_{(1)})\ox c_{(2)}) \bigr)
\\
&= a \phi'(c_{(1)}) \phi(c_{(2)}) \ox c_{(3)}
= a \mu_A(\phi' \ox \phi) \Dl_C(c_{(1)}) \ox c_{(2)}
\\
&= a (\phi' * \phi)(c_{(1)}) \ox c_{(2)} = R(\phi' * \phi)(a \ox c).
\end{align*}
Thus, $R \: \Hom(C,A) \to \End_A^C(A \ox C)$ is an antihomomorphism.
Its inverse $T$ is an antihomomorphism, too, because from
$\phi = T(\psi)$ and $\phi' = T(\psi')$ we conclude that
$T(\psi \circ \psi') = T(R\phi \circ R\phi')
= T \bigl( R(T(\psi') * T(\psi) \bigr)) = T(\psi') * T(\psi)$.
\end{proof}

\begin{proof}[Proof of Proposition~\ref{pr:Hopf-and-can-map}.]
Taking $H = A = C$ in Lemma~\ref{lm:Koppinen}, we obtain two
mutually inverse antihomomorphisms
$R \: \End(H) \to \End_H^H(H \ox H)$ and 
$T \: \End_H^H(H \ox H) \to \End(H)$.

Observe that $\can_H \in \End_H^H(H \ox H)$. Indeed,
$\can_H(h \ox h') := h {h'}_{\!(1)} \ox {h'}_{\!(2)}$ and so
\begin{align*}
(\mu \ox \id) \circ (\id \ox \can_H) (h \ox h' \ox h'')
&= (\mu \ox \id)(h \ox h' {h''}_{\!(1)} \ox {h''}_{\!(2)})
= h h' {h''}_{\!(1)} \ox {h''}_{\!(2)}
\\
&= \can_H(h h' \ox h'') = \can_H \circ(\mu \ox \id)(h \ox h' \ox h''),
\\
\shortintertext{and}
(\can_H \ox \id) \circ (\id \ox \Dl) (h \ox h')
&= (\can_H \ox \id)(h \ox {h'}_{\!(1)} \ox {h'}_{\!(2)})
= h {h'}_{\!(1)} \ox {h'}_{\!(2)} \ox {h'}_{\!(3)}
\\
&= (\id \ox \Dl) (h {h'}_{\!(1)}\ox {h'}_{\!(2)})
= (\id \ox \Dl) \circ \can_H(h \ox h')
\end{align*}
for every $h,h', h'' \in H$.

Assume that $\can_H$ is bijective. Naturally, $\can_H^{-1}$ belongs
to $\End_H^H(H \ox H)$ as well. To check that directly, one may use
the three bijections $\can_H$, $(\id \ox \can_H)$ and
$(\can_H \ox \id)$ to get
\begin{align*}
\MoveEqLeft{
\can_H \bigl( (\mu \ox \id)(\id \ox \can_H^{-1})
- \can_H^{-1}(\mu \ox \id) \bigr) (\id \ox \can_H)}
\\
&= \can_H(\mu \ox \id) - (\mu \ox \id)(\id \ox \can_H) = 0,
\\
\MoveEqLeft{
(\can_H \ox \id) \bigl( (\can_H^{-1} \ox \id)(\id \ox \Dl)
- (\id \ox \Dl)\can_H^{-1} \bigr) \can_H}
\\
&= (\id \ox \Dl)\can_H - (\can_H \ox \id)(\id \ox \Dl) = 0.
\end{align*}

From the proof of Lemma~\ref{lm:Koppinen}, we then find
\[
T(\can_H)(h) = (\id \ox \eps) \can_H (1_H \ox \id)(h)
= (\id \ox \eps) (h_{(1)} \ox h_{(2)}) = h_{(1)} \eps( h_{(2)}) = h,
\]
entailing $T(\can_H) = \id_H$. Hence, there is a unique
$S \in \End(H)$ such that $T(\can_H^{-1}) = S$. From
$\can_H^{-1} \circ \can_H = \can_H \circ \can_H^{-1} = \id_{H\ox H}$
we conclude $\id * S = S * \id = \eta\eps$ in $\End(H)$, and $S$ is an
antipode for~$H$. From the proof of Lemma~\ref{lm:Koppinen}, we know
that $S = T(\can_H^{-1})
= (\id_A \ox \eps) \circ \can_H^{-1} \circ (1_A \ox \id_C)$.
\end{proof}

\begin{remark}
Theorem 1.3.18 of \cite{Timmermann} states that a unital bialgebra $H$
is a Hopf algebra if and only if the associated maps
\begin{align*}
T_1 &= \Dl(1 \ox \id_H) : H \ox H \to H \ox H 
: h \ox h' \mapsto h_{(1)} \ox h_{(2)} h'
\\
T_2 &= (\id_H \ox 1)\Dl : H \ox H \to H \ox H 
: h \ox h' \mapsto h {h'}_{\!(1)} \ox {h'}_{\!(2)}
\end{align*}
are bijective. Observe that in the particular case $\dl = \Dl$, we
obtain $T_2 = \can$ and $T_1 = \can'$, if we redefine the canonical
action from a left coaction $\dl' \: A \to H \ox A$, instead of a
right one, and take again $\dl' = \Dl$ in the particular case $A = H$.

As mentioned by \cite{Timmermann}, this characterization using the
bijectivity of $T_1$ and $T_2$ is adequate for generalizations to
nonunital algebras and was originally used in~\cite{VanDaele3} for the
definition of multiplier Hopf algebras.
\end{remark}

\begin{example} 
\label{eg:Laurent-bialgebra-Hopf}
Back to Example~\ref{eg:Laurent-bialgebra}, with $H = F[X,X^{-1}]$ and
$\Dl(X^k) = X^k \ox X^k$: we get
$\can_H(X^k \ox X^l) = X^{k+l} \ox X^l$ and
$\can_H^{-1} (X^k \ox X^l) = X^{k-l}\ox X^l$. From the previous proof
we know that
\[
S(X^k) = (\id_H \ox \eps) \circ \can_H^{-1} (1_H \ox X^k) 
= (\id_H \ox \eps)(X^{-k} \ox X^k) = X^{-k} \ox \eps(X^k) = X^{-k}.
\]
As a result, $F[X,X^{-1}]$ is a Hopf algebra and a Hopf--Galois
extension of~$F$.
\end{example}

\begin{example} 
\label{eg:alg-grupo-S-2}
From Example~\ref{eg:alg-grupo}, $F[G]$ has canonical map given by
$\can_H(g \ox g') = g g' \ox g'$, with inverse
$\can_H^{-1}(g \ox g') = g (g')^{-1} \ox g'$. Its antipode $S$ is
given by
$S(g) = (\id_H \ox \eps)(g^{-1} \ox g) = g^{-1} \eps(g) = g^{-1}$.
\end{example}

The following result is, in some sense, a generalization of
Proposition~\ref{pr:Hopf-and-can-map}. In
Proposition~\ref{pr:Hopf-and-faithfully-flat} we shall dispense with
the finite-dimensionality hypothesis and replace the field $F$ with a
commutative ring $k$, asking for some sort of flatness on the
algebras.%
\footnote{%
Flatness relates the bijectivity of
$f \ox \id : A \ox H \to A' \ox H$ with that of $f \: A \to A'$.}
When working with modules over rings, it is important to keep in mind
difficulties that may occur in case of torsion; for example,
$\bQ \ox_{\bZ} \bZ_n = 0$.

One of the key points in proving that result is the possibility of
deducing, from the bijectivity of an $F$-algebra homomorphism
$f \: A \to A'$, the bijectivity of
$f \ox \id \: {A \ox H} \to A' \ox H$ for a given algebra~$H$. If
$a' \ox h \in A' \ox H$, then because $f$ is surjective, there is some
$a \in A$ such that
$a' \ox h = \sum f(a) \ox h_j = (f \ox \id)(a \ox h)$, and so
$f \ox \id$ is surjective. If
$(f \ox \id)(a \ox h) = f(a) \ox h = 0$, it is not obvious whether
$a \ox h = 0$. In case $A$ and $H$ are finite-dimensional, with
respective bases $\{a_1,\dots,a_r\}$ and $\{h_1,\dots,h_s\}$, then
$0 = \sum \al_i \bt_j f(a_i) \ox h_j$ for some (unique) elements
$\al_i,\bt_j \in F$. Because the $f(a_i) \ox h_j$ are linearly
independent in $A'\ox H$, each coefficient $\al_i \bt_j$ must be~$0$.
As a result, $a \ox h = \sum \al_i \bt_j a_i \ox h_j = 0$, and
$f \ox \id$ is necessarily injective.

Another required result in the upcoming proof is the converse
possibility to conclude from the bijectivity of
$f \ox \id : A \ox H \to A' \ox H$ that $f \: A \to A'$ is bijective.

If $f(a) = 0$, then $(f \ox \id)(a \ox h) = f(a) \ox h = 0$, so that
$a \ox h = 0$ for each $h \in H$. Once more, assuming that $A$ and $H$
are finite-dimensional (with bases as above), we get
$\sum_i \al_i a_i \ox h_j = 0$ for each~$j$, implying that all
$\al_i = 0$, forcing $a = 0$. For each $a' \in A'$, the surjectivity of
$f \ox \id$ implies that for any basic $h_j \in H$, there is
$a_j \ox h_j \in A \ox H$ with
$a' \ox h_j = (f \ox \id)(a_j \ox h_j) = f(a_j) \ox h_j$. Taking in
$A'$ a finite basis $\{{a'}_{\!1},\dots, {a'}_{\!t}\}$, we write
$0 = (a' - f(a^j)) \ox h_j = {\al'}_{\!k,j} {a'}_{\!k} \ox h_j$, just
as before, to deduce that each ${\al'}_{\!k,j} = 0$. In particular,
$a = f(a_j)$, and $f$ must be surjective.

\begin{proposition} 
\label{pr:bialgebra-with-coaction-and-bijective-map}
If a finite-dimensional bialgebra $H$ over a field $F$ admits an
algebra coaction $\dl \: A \to A \ox H$ with $A$ finite-dimensional
over $F$, for which the canonical map $\can \: A \ox_B A \to A \ox H$
is bijective, then $H$ has an antipode, i.e., $H$ is a Hopf algebra.
\end{proposition}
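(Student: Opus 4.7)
The plan is to reduce the statement to Proposition~\ref{pr:Hopf-and-can-map}, which equips $H$ with an antipode provided that the canonical map $\can_H \: H \ox H \to H \ox H$ associated to the regular coaction $\Dl$ of $H$ on itself is bijective. In the finite-dimensional setting over $F$, the algebra $A$ (assumed nonzero) is a free $F$-module, hence faithfully flat, so the functor $A \ox_F (-)$ reflects isomorphisms; therefore it suffices to prove that $\id_A \ox \can_H$ is bijective. By the computation in Example~\ref{eg:tensor-product-extension} (the part not invoking the antipode), $\id_A \ox \can_H$ is precisely the canonical map $\wt\can$ of the $H$-comodule algebra $A \ox H$ (with coaction $\id_A \ox \Dl$), viewed as an extension of its coinvariant subalgebra $A = (A \ox H)^{\co H}$.

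I would then transport this question across $\can$. Applying $\can^{-1}$ to each slot $A \ox H$ appearing in $(A \ox H) \ox_A (A \ox H)$ and in $(A \ox H) \ox H$, and identifying $(A \ox_B A) \ox_A (A \ox_B A)$ with the triple product $A \ox_B A \ox_B A$ (this uses that the right $A$-action on $A \ox_B A$ is multiplication in the second factor), repeated use of coassociativity of $\dl$ together with multiplicativity $\dl(bc) = \dl(b)\dl(c)$ converts $\wt\can$ into the map
\[
\psi \: A \ox_B A \ox_B A \to A \ox_B A \ox H, \qquad
a \ox_B b \ox_B c \mapsto a \ox_B b c_{(0)} \ox c_{(1)},
\]
which is patently the ``partial'' canonical map $\id_A \ox_B \can$. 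So the question reduces to showing $\id_A \ox_B \can$ is bijective.

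The principal obstacle is this last step: $\can$ is bijective, but tensoring with $\id_A$ on the left is over $B$, not over the base field $F$, and without some form of flatness of $A$ over $B$ bijectivity need not transfer. In the finite-dimensional setting one argues (in the spirit of Kreimer--Takeuchi) that the bijectivity of $\can$, together with the finiteness of all dimensions involved, forces $A$ to be faithfully flat, indeed free, as a left $B$-module; this makes $A \ox_B (-)$ exact, so that bijectivity of $\can$ ascends to bijectivity of $\id_A \ox_B \can$. Once $\id_A \ox_B \can$ is shown bijective, the chain of transported bijections yields bijectivity of $\can_H$, and Proposition~\ref{pr:Hopf-and-can-map} concludes the proof by supplying the antipode $S = (\id_H \ox \eps) \circ \can_H^{-1} \circ (\eta \ox \id_H)$.
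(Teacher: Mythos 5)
Your overall route is the same as the paper's: reduce to Proposition~\ref{pr:Hopf-and-can-map} by showing that $\id_A \ox \can_H$ is bijective, recognize $\id_A \ox \can_H$ as the canonical map of the extension $A \subseteq A \ox H$, transport it across $\can$ to the map $\id_A \ox_B \can \: A \ox_B A \ox_B A \to A \ox_B A \ox H$, and descend from $\id_A \ox \can_H$ to $\can_H$ using that $A$ is a nonzero finite-dimensional $F$-vector space. The commutative diagram you describe is exactly the one in the paper's proof, and your computation of the transported map $a \ox_B b \ox_B c \mapsto a \ox_B b c_{(0)} \ox c_{(1)}$ is correct.

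The trouble is the step you single out as the ``principal obstacle'': it is not an obstacle, and the patch you propose for it does not work. You do not need any flatness of $A$ over $B$ to pass from the bijectivity of $\can$ to that of $\id_A \ox_B \can$. Flatness is what preserves \emph{injectivity} of a map not known to be invertible (equivalently, exactness); but $\can$ is an isomorphism of left $B$-modules (it is even left $A$-linear), and $A \ox_B (-)$ is an additive functor, so it carries the isomorphism $\can$ to an isomorphism whose inverse is $\id_A \ox_B \can^{-1}$ --- no hypothesis on $A$ as a $B$-module enters. This is precisely why the paper can assert that every arrow of the diagram other than the bottom one is bijective ``because $\can$ is bijective''. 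By contrast, the Kreimer--Takeuchi-style argument you invoke to show that $A$ is faithfully flat (indeed free) over $B$ is not available at this point: such results are proved for $H$-Galois extensions with $H$ a \emph{Hopf} algebra, and their proofs use the antipode, which is exactly what this proposition is trying to construct. As written, your last step therefore rests on an unproved and circular claim; replacing it by the functoriality observation above removes the gap and makes your argument coincide with the paper's.
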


\begin{proof} 
Consider the following diagram:
\[
\xymatrix@R=3pc@C=3pc{
(A \ox H) \ox_B A \ar[d]_{\can_{13}} &
A \ox_B A \ox_B A \ar[r]^{\id_A\ox\can} \ar[l]_{\can\ox\id_A} &
A \ox_B A \ox H \ar[d]^{\can\ox\id_H} 
\\
A \ox H \ox H \ar[rr]^{\id_A\ox\can_H} & &
A \ox H \ox H }
\]
where
$\can_{13} = \switch{2}{3} \circ (\can \ox \id_H) \circ \switch{2}{3}
= \switch{1}{2} \circ (\id_H \ox \can) \circ \switch{1}{2}$, using the
flip operator. In other words,
$\can_{13}(a \ox h \ox_B a') := a\,{a'}_{\!(0)}\ox h\ox {a'}_{\!(1)}$.
This diagram is commutative (departing from the middle top):
\begin{align*}
\MoveEqLeft{
(\can \ox \id_H) (\id_A \ox \can) (a \ox_B b \ox_B a')
= (\can \ox \id_H)\bigl( a \ox_B b {a'}_{\!(0)}\ox {a'}_{\!(1)} \bigr)}
\\
&= a b_{(0)} {a'}_{\!(0)(0)} \ox b_{(1)} {a'}_{\!(0)(1)}
\ox {a'}_{\!(1)}
= a b_{(0)} {a'}_{\!(0)} \ox b_{(1)} {a'}_{\!(1)(1)}
\ox {a'}_{\!(1)(2)}
\\
&= (\id_A \ox \can_H) \bigl(
a b_{(0)} {a'}_{\!(0)} \ox b_{(1)} \ox {a'}_{\!(1)} \bigr)
= (\id_A \ox \can_H) \can_{13} \bigl(
a b_{(0)} \ox b_{(1)} \ox_B a' \bigr)
\\
&= (\id_A \ox \can_H) \can_{13} (\can \ox \id_A) (a \ox_B b \ox_B a').
\end{align*}

Every map in this diagram (except from the one at the bottom) is of
the form $\can \ox \id$ or $\id \ox \can$, and each of these is also a
bijection because $\can$ is bijective.

Commutativity of the diagram implies that $\id_A \ox \can_H$ is
bijective, which in turn implies that
$\can_H \: H \ox H \to H \ox H$ is bijective. The result follows from
Proposition~\ref{pr:Hopf-and-can-map}.
\end{proof}

Let us now focus on how Hopf algebras relate to Hopf--Galois
extensions. What follows is the main result of \cite{Schauenburg1997}.
Our proof is taken from \cite{SchauenburgFIC} (originally
from~\cite{Takeuchi}).

Given an algebra $A = (A, \mu, \eta)$, its \textit{opposite algebra}
$A^\opp = (A, \mu^\opp, \eta)$ has the reversed
multiplication operation $\mu^\opp(a \ox a') = \mu(a' \ox a)$ for all
$a,a' \in A$. Evidently $A^\opp$ is associative, and if $A$ has a
unit, it is also the unit for $A^\opp$. Note that $A$ is commutative
if and only if $A^\opp = A$ (or $\mu^\opp = \mu$).

If $(C, \Dl, \eps)$ is a coalgebra, its 
\textit{co-opposite coalgebra} $C^\copp = (C,\Dl^\opp,\eps)$ has the
\textit{opposite coproduct} $\Dl^\opp(c) = c_{(2)} \ox c_{(1)}$, i.e.,
$\Dl^\opp = \tau \circ \Dl$. $C^\copp$ is coassociative because
\[
(\id_C \ox \Dl^\opp) \circ \Dl^\opp(c)
= c_{(2)} \ox c_{(1)(2)} \ox c_{(1)(1)}
= c_{(2)(2)} \ox c_{(2)(1)} \ox c_{(1)}
= (\Dl^\opp \ox \id_C) \circ \Dl^\opp(c).
\]
A coalgebra is called \textit{cocommutative} if $C^\copp = C$.

If $A = (A, \mu, \eta, \Dl, \eps)$ is a bialgebra, then all of the
following are bialgebras, too \cite{KlimykSchmudgen}:
\begin{itemize}
\item
the \textit{opposite bialgebra} 
$A^\opp = (A, \mu^\opp, \eta, \Dl, \eps)$,
\item
the \textit{co-opposite bialgebra} 
$A^\copp = (A, \mu, \eta, \Dl^\opp, \eps)$,
\item
the \textit{opposite co-opposite bialgebra} 
$A^{\opp,\copp} = (A, \mu^\opp, \eta, \Dl^\opp, \eps)$.
\end{itemize}

Furthermore, if $S$ is an antipode for $A$, then it is also an
antipode for $A^{\opp,\copp}$, which is called the \textit{opposite
co-opposite Hopf algebra} (see \cite[Corollary III.3.5]{KasselBook}).
As for the other two bialgebras, one can show that the following are
equivalent:
\begin{itemize} 
\item
the antipode $S \: A \to A$ is an invertible linear map, with inverse
$S^{-1}$,
\item
$S^{-1} \: A^\opp \to A^\opp$ is an antipode,
\item
$S^{-1} \: A^\copp \to A^\copp$ is an antipode.
\end{itemize}

\begin{remark}
If $S$ is an invertible antipode for $H$, then $S(h) = l$ and
$S(h') = l'$ entail
\[
S^{-1}(ll') = S^{-1}(Sh\,Sh') = S^{-1}(S(h' h)) = h' h 
= S^{-1}(l') S^{-1}(l).
\]
Also, since $\eps(Sh) = \eps(h)$ and
$S(h_{(1)}) \ox S(h_{(2)}) = (Sh)_{(2)} \ox (Sh)_{(1)}$ -- see
Proposition~\ref{pr:properties-of-S},
\begin{align*}
S^{-1} * \id(l) 
&= \mu (S^{-1} \ox \id)(l_{(1)} \ox l_{(2)}) 
= \mu (S^{-1} \ox \id)((Sh)_{(1)} \ox (Sh)_{(2)}) 
\\
&= \mu (S^{-1} \ox \id)(S(h_{(2)}) \ox S(h_{(1)})) 
= h_{(2)}\, S(h_{(1)}) = \eps(h) = \eps(Sh) = \eps(l),
\end{align*}
and similarly $\id * S^{-1}(l) = \eps(l)$. Therefore, the first
statement above implies the second.
\end{remark}

\medskip

Now consider a coaction $\dl \: A \to A \ox H$. There is a
corresponding coaction $\dl^\opp \: A^\opp \to A^\opp \ox H^\opp$.
Since one only modifies $\mu_A \mapsto \mu_H^\opp$ and
$\mu_H \mapsto \mu_H^\opp$, all coaction properties of~$\dl$ remain
unchanged, except possibly for $\dl(a a') = \dl (a) \dl(a')$, which
becomes $\dl(a' a) = \dl(a') \dl(a)$. Thus, the same (undecorated)
$\dl$ defines a coaction $\dl \: A^\opp \to A^\opp \ox H^\opp$. Now
$B^\opp = (A^\opp)^{\co H^\opp} 
= \set{a \in A^\opp : \dl(a) = a \ox 1_{H^\opp}}$, without much
variation. However, the new canonical map
$\can^\opp \: A^\opp \ox_{B^\opp} A^\opp \to A^\opp \ox H^\opp$ is
given by
\[
\can^\opp(a \ox a') = (\mu^\opp \ox \id_H) \dl (a \ox a')
= {a'}_{\!(0)} a \ox {a'}_{\!(1)} \,.
\]

The following result comes from~\cite{Schneider1990}. Our proof is
indicated in~\cite{SchauenburgFIC}.

\begin{lemma}[Schneider] 
\label{lm:HS-to-HopS}
If the Hopf algebra $H$ has bijective antipode and $A$ is a
$H$-comodule algebra, then $A$ is a $H$-Galois extension if and only
if $A^\opp$ is a $H^\opp$-Galois extension.
\end{lemma}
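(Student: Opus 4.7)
The statement is symmetric under $(H,A) \leftrightarrow (H^\opp, A^\opp)$, since $(A^\opp)^\opp = A$, $(H^\opp)^\opp = H$, and $H^\opp$ has the bijective antipode $S^{-1}$. So it suffices to prove one direction: I will assume $A$ is $H$-Galois and deduce that $A^\opp$ is $H^\opp$-Galois. The coinvariant subalgebras coincide as subspaces, $B^\opp = B$, because coinvariance is a condition on the coaction alone.

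My main tool will be the translation map $\tau\: H \to A \ox_B A$ defined by $\tau(h) := \can^{-1}(1_A \ox h)$, written in Sweedler-like notation as $\tau(h) = h^{[1]} \ox_B h^{[2]}$ (with summation understood). From the identity $\can(\tau(h)) = 1_A \ox h$, and then applying $\id_A \ox \eps$, one obtains the two basic translation formulas
\[
h^{[1]}\, h^{[2]}_{(0)} \ox h^{[2]}_{(1)} = 1_A \ox h,
\qquad
h^{[1]}\, h^{[2]} = \eps(h)\, 1_A.
\]
The key additional identity to extract is
\[
(\star) \qquad (h^{[1]})_{(0)}\, h^{[2]} \ox (h^{[1]})_{(1)} = 1_A \ox S(h).
\]
To derive $(\star)$, I will apply $\dl \ox \id_H$ to the first formula (using that $\dl$ is a unital algebra homomorphism to distribute over the product $h^{[1]} h^{[2]}_{(0)}$), obtaining an identity in $A \ox H \ox H$; then I multiply the middle $H$-factor by the antipode of the third via $\id_A \ox \mu_H(\id_H \ox S)$, collapsing two of the slots through the antipode axiom $h_{(1)} S(h_{(2)}) = \eps(h) 1_H$ together with counitality. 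This is the main technical step.

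With $(\star)$ in hand, I will define the candidate inverse of $\can^\opp$ by
\[
\Psi\: A \ox H \to A^\opp \ox_{B^\opp} A^\opp,
\qquad
\Psi(a \ox h) := (S^{-1}(h))^{[2]}\, a \;\ox_{B^\opp}\; (S^{-1}(h))^{[1]},
\]
using crucially the bijectivity of $S$. Well-definedness modulo $J_{B^\opp}$ is routine bookkeeping: any $J_B$-relation from a choice of representative for $\tau(S^{-1}(h))$ becomes, after swapping the two tensorands and left-multiplying by $a$, precisely a $J_{B^\opp}$-relation. One then computes
\begin{align*}
\can^\opp(\Psi(a \ox h))
&= \bigl( (S^{-1}(h))^{[1]} \bigr)_{(0)} (S^{-1}(h))^{[2]}\, a
  \ox \bigl( (S^{-1}(h))^{[1]} \bigr)_{(1)}  \\
&= 1_A \cdot a \ox S(S^{-1}(h)) = a \ox h,
\end{align*}
where the second equality is $(\star)$ applied to $S^{-1}(h)$. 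The opposite composition $\Psi \circ \can^\opp = \id$ is handled by an entirely analogous calculation using the companion identity $a_{(0)}\,(a_{(1)})^{[1]} \ox_B (a_{(1)})^{[2]} = 1_A \ox_B a$, which encodes $\can^{-1} \circ \can = \id$.

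The main obstacle is establishing $(\star)$: it is the only step where the antipode axiom is invoked substantively, and it is what converts the translation data for $\can$ into the translation data required for $\can^\opp$. The invertibility of $S$ itself enters only in the final step, to make the formula for $\Psi$ meaningful via the substitution $h \rightsquigarrow S^{-1}(h)$; without it, there is no way to exploit $(\star)$ to construct an inverse for the opposite canonical map.
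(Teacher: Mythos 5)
Your argument is correct in substance but takes a genuinely different route from the paper. The paper disposes of both directions at once by exhibiting the explicit bijection
$\Phi := (\id_A \ox \mu_H)\circ(\dl \ox S)\: A \ox H \to A \ox H$, $a \ox h \mapsto a_{(0)} \ox a_{(1)}\,Sh$ (with inverse $a \ox h \mapsto a_{(0)} \ox S^{-1}h\,a_{(1)}$), and checking the triangle $\can' = \Phi \circ \can$, where $\can'(a \ox_B a') = a_{(0)}a' \ox a_{(1)}$ is the map identified with $\can^\opp$; bijectivity of $\can$ and of $\can^\opp$ are then equivalent with no further work. You instead reduce to one direction by symmetry (legitimately, since $S^{-1}$ is a bijective antipode for $H^\opp$) and manufacture an explicit inverse of $\can^\opp$ from the translation map $\tau = \can^{-1}(1_A \ox -)$. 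Your identity $(\star)$ is correct and your derivation of it works: it is precisely the statement $\can'(\tau(h)) = 1_A \ox S(h)$, and every intermediate expression lives in an honest tensor product, so there is no well-definedness issue; the verification of $\can^\opp\circ\Psi = \id$ and the well-definedness of $\Psi$ modulo $J_{B^\opp}$ are also right. What your approach buys is the explicit translation map of the opposite extension, $\tau^\opp(h) = (S^{-1}h)^{[2]} \ox_{B^\opp} (S^{-1}h)^{[1]}$, which the paper's factorization does not display; what it costs is having to check two compositions instead of one commutative triangle.

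The one step that needs more care is $\Psi \circ \can^\opp = \id$. Unwinding it, the identity actually required is
\[
(S^{-1}(a_{(1)}))^{[2]}\, a_{(0)} \ox_{B^\opp} (S^{-1}(a_{(1)}))^{[1]} \;=\; 1_A \ox_{B^\opp} a ,
\]
which is the companion identity \emph{for the opposite structure}, not the identity $a_{(0)}(a_{(1)})^{[1]} \ox_B (a_{(1)})^{[2]} = 1_A \ox_B a$ that you quote; calling the calculation ``entirely analogous'' risks circularity, since the $\opp$-side companion identity is essentially the content of what you are constructing. It is, however, provable from your hypotheses: flip it to the equivalent statement $(S^{-1}(a_{(1)}))^{[1]} \ox_B (S^{-1}(a_{(1)}))^{[2]}\,a_{(0)} = a \ox_B 1_A$ in $A \ox_B A$ and apply the injective map $\can$ to both sides; using that $\dl$ is an algebra map, your first translation formula, and $S^{-1}(h_{(2)})\,h_{(1)} = \eps(h)1_H$, both sides reduce to $a \ox 1_H$. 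With that patch the proof is complete.
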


\begin{proof}
As recently observed, 
$\can^\opp \: A^\opp \ox_{B^\opp} A^\opp \to A^\opp \ox H^\opp$ gets
identified with the linear map $\can' \: A \ox_B A \to A \ox H
: a \ox a' \mapsto a_{(0)} a' \ox a_{(1)}$. Consider the diagram
\[
\xymatrix{
A \ox_B A \ar[r]^\can \ar[dr]_{\can'} 
& A \ox H \ar[d]^{(\id_A\ox\mu_H) \circ (\dl \ox S)}  \\
& A \ox H, }
\]
where $(\id_A \ox \mu_H) \circ (\dl \ox S) \: A \ox H \to A \ox H
: a \ox h \mapsto a_{(0)} \ox a_{(1)}\,Sh$ is bijective, with inverse
given by $a \ox h \mapsto a_{(0)} \ox S^{-1}h\,a_{(1)}$. Indeed,
\begin{align*}
(\id_A \ox \mu_H) \circ (\dl \ox S)(a_{(0)} \ox S^{-1}h\,a_{(1)})
&= a_{(0)} \ox a_{(1)}  S(S^{-1}h\,a_{(2)}))
= a_{(0)} \ox a_{(1)} S(a_{(2)}) h
\\
&= a_{(0)} \ox \eps_H(a_{(1)}) h = a \ox h,
\\
\bigl( (\id_A \ox \mu_H) \circ (\dl \ox S) \bigr)^{-1}
(a_{(0)} \ox a_{(1)}\,Sh)
&= a_{(0)} \ox S^{-1}(a_{(1)}\,Sh) a_{(2)}  
= a_{(0)} \ox h S^{-1}(a_{(1)}) a_{(2)}
\\
&= a_{(0)} \ox h\,\eps_H(a_{(1)}) = a \ox h,
\end{align*}
where we use $S^{-1}(h h') = S^{-1}(h') S^{-1}(h)$ and
$S^{-1} * \id = \eta\eps = \id * S^{-1}$.

The commutativity of this diagram follows from the next relations.
Firstly,
\begin{align*}
(\dl \ox S) \can(a \ox a') 
&= (\dl \ox S) (a {a'}_{\!(0)} \ox {a'}_{\!(1)})
= (a{a'}_{\!(0)})_{(0)} \ox (a{a'}_{\!(0)})_{(1)} \ox S({a'}_{\!(1)})
\\
&= (a_{(0)} \ox a_{(1)})({a'}_{\!(0)} \ox {a'}_{\!(1)}) 
\ox S({a'}_{\!(2)})
\\
&= a_{(0)} {a'}_{\!(0)} \ox a_{(1)} {a'}_{\!(1)} \ox S({a'}_{\!(2)}),
\end{align*}
because $\dl(a{a'}_{\!(0)}) = \dl(a) \dl({a'}_{\!(0)})$. Secondly,
\begin{align*}
(\id_A \ox \mu_H) (\dl \ox S) \can(a \ox a')
&= (\id_A \ox \mu_H) \bigl(
a_{(0)} {a'}_{\!(0)} \ox a_{(1)}{a'}_{\!(1)} \ox S({a'}_{\!(2)}) \bigr)
\\
&= a_{(0)} {a'}_{\!(0)} \ox a_{(1)}{a'}_{\!(1)} S({a'}_{\!(2)})
= a_{(0)} {a'}_{\!(0)} \ox a_{(1)} \eps({a'}_{\!(1)})
\\
&= a_{(0)} {a'}_{\!(0)} \eps({a'}_{\!(1)}) \ox a_{(1)} 
= a_{(0)} a' \ox a_{(1)} = \can'(a \ox a'),
\end{align*}
since $(\id_A \ox \eps_H) \dl = \id_A$. As a result, $\can$ is
invertible if and only if $\can'$ is invertible.
\end{proof}

In the proof of
Proposition~\ref{pr:bialgebra-with-coaction-and-bijective-map} in the
finite-dimensional case, we showed that the bijectivity of 
$\id_A \ox \can_H \: A \ox H \ox H \to A \ox H \ox H$ implies that the
map $\can_H \: H \ox H \to H \ox H$ is also bijective. In
\cite{Schauenburg1997}, with a shorter proof due to \cite{Takeuchi},
see also~\cite{SchauenburgFIC}, this result does not require finite
dimensionality, but asks for a flatness property of the algebras.

In general, if $0 \to M' \to M \to M'' \to 0$ is a short exact
sequence of $k$ modules and $N$ is a $k$-module (or a $k$-algebra), it
follows that $M' \ox N \to M \ox N \to M'' \ox N \to 0$ is exact
\cite[Proposition~XVI.2.6]{Lang}. $N$ is said to be \textit{flat} (or
``tensor exact'') if moreover the tensored sequence
$0 \to M' \ox  N \to M \ox  N \to M'' \ox N \to 0$ is exact. If
$f \: M \to M'$ is a bijection and $N$ is flat, then 
$f \ox \id_N \: M \ox N \to M' \ox N$ is also a bijection.

A $k$-module $M$ is \textit{faithfully flat} if it is flat and for
every nonzero $k$-module $N$ the module $M \ox N$ is also nonzero,
i.e., the functor $T_M \: N \mapsto M \ox N$ is faithful. It may be
checked \cite{Lang} that the faithful flatness of~$M$ is equivalent
with the condition: a sequence of $k$ modules $N'' \to N \to N'$ is
exact if and only if the tensored sequence
$N'' \ox M \to N \ox M \to N' \ox M$ is exact. A $k$-algebra is
faithfully flat if it is so as a $k$-module. With $A$ faithfully flat,
$f \: M \to M'$ is a bijection if and only if
$f \ox \id_A \: M \ox A \to M' \ox A$ is bijective.

The following is \cite[Lemma~2.1.5]{SchauenburgFIC}.

\begin{proposition}[Schauenburg] 
\label{pr:Hopf-and-faithfully-flat}
Let $H$ be a flat $k$-bialgebra, and let $A$ be a (right) $H$--Galois
extension of its coinvariant subalgebra $A^{\co H}$, such that $A$ is
faithfully flat as a $k$-module. Then $H$ is a Hopf algebra.
\end{proposition}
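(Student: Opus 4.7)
The plan is to recycle the commutative-square argument from Proposition~\ref{pr:bialgebra-with-coaction-and-bijective-map}, replacing the finite-dimensional cancellation at the end by a descent step along the faithfully flat algebra~$A$. With $B := A^{\co H}$, form exactly the same square as in the finite-dimensional proof, whose four vertices are $(A \ox H) \ox_B A$, $A \ox_B A \ox_B A$, $A \ox_B A \ox H$, and (doubled at the bottom) $A \ox H \ox H$, linked by the maps $\can_{13}$, $\can \ox \id_A$, $\id_A \ox \can$, $\can \ox \id_H$, and $\id_A \ox \can_H$, with $\can_{13}(a \ox h \ox_B a') := a\,{a'}_{(0)} \ox h \ox {a'}_{(1)}$. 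Its commutativity is exactly the Sweedler-calculus identity already verified and does not rely on finite dimensionality.

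Since $\can$ is bijective by hypothesis, functoriality of the tensor product (over $k$ and over $B$) implies that $\can \ox \id_A$, $\id_A \ox \can$, and $\can \ox \id_H$ are all bijections; $\can_{13}$ is bijective for the same reason, differing from $\can \ox \id_H$ only by a flip of two tensor factors. Chasing the square then forces $\id_A \ox \can_H \: A \ox H \ox H \to A \ox H \ox H$ to be bijective as well.

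At this stage the hypothesis of \emph{faithful} flatness, as opposed to mere flatness, enters decisively. One of the equivalent characterizations of faithful flatness of $A$ over $k$ asserts that, for any $k$-linear map $\phi \: M \to N$, the tensored map $\id_A \ox \phi \: A \ox M \to A \ox N$ is bijective if and only if $\phi$ itself is. Applied to $\phi = \can_H$, this yields the bijectivity of $\can_H \: H \ox H \to H \ox H$, and Proposition~\ref{pr:Hopf-and-can-map} then produces the antipode explicitly as $S = (\id_H \ox \eps) \circ \can_H^{-1} \circ (1_H \ox \id_H)$, showing that $H$ is a Hopf algebra.

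The main technical hurdle I anticipate is the justification that the balanced tensor products populating the top row of the square behave properly, namely that the relative tensor products $A \ox_B A$ really do preserve the bijections in question and that the maps $\can \ox \id_A$ and $\id_A \ox \can$ are genuinely well defined between them. This is the place where flatness of $H$ over $k$ and faithful flatness of $A$ over $k$ ensure that all exactness issues disappear; any attempt to weaken the hypothesis to plain flatness of $A$ would stop the descent step cold, since without the faithfulness half one cannot conclude the bijectivity of $\can_H$ from that of $\id_A \ox \can_H$.
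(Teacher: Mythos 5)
Your proof is correct and follows essentially the same route as the paper: the same commutative square from the proof of Proposition~\ref{pr:bialgebra-with-coaction-and-bijective-map}, with every arrow except the bottom one bijective, followed by faithfully flat descent along $A$ to obtain the bijectivity of $\can_H$ and an appeal to Proposition~\ref{pr:Hopf-and-can-map} for the antipode.
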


\begin{proof}
To verify that $\can_H$ is invertible in $\End(H \ox H)$, we consider
the same diagram used in the proof of
Proposition~\ref{pr:bialgebra-with-coaction-and-bijective-map}. Since
$\can \: A \ox_B A \to A \ox H$ is bijective and $H$ is flat, every
arrow in that diagram other than the bottom one is bijective, and from
the commutativity of the diagram, we conclude that
$\id_A \ox \can_H \: A \ox H \ox H \to A \ox H \ox H$ is also a
bijection. Then, since $A$ is faithfully flat, we deduce that
$\can_H \: H \ox H \to H \ox H$ is invertible.
\end{proof}

\goodbreak 


\section{Quantum groups and Hopf algebras} 
\label{sec:Qgroups-HG-ext}

\begin{flushright}
\begin{minipage}[t]{8cm}
\small \itshape
The most interesting Hopf algebras are those which are neither
commutative nor cocommutative.
\par \hfill \upshape
--- V. G. Drinfel'd
\end{minipage}
\end{flushright}

What is a quantum group and how does it fit into our study of
Hopf--Galois extensions?

As indicated in \cite{VanDaele}, there is a general consensus that
Hopf algebras are the starting point for a study of quantum groups;
from there, many venues are possible. The author of \cite{VanDaele}
classifies the hierarchy of the subject as follows: finite quantum
groups at the beginning; discrete quantum groups; compact quantum
groups and algebraic quantum groups; to finally arrive at locally
compact quantum groups. In the historical record the compact quantum
groups appear before the discrete ones.

Compact and locally compact quantum groups are built in the setting of
unital $C^*$-algebras (with a coproduct satisfying a density property,
not mentioning a counit or antipode). Discrete quantum groups appear
as (multiplier) Hopf $*$-algebras that are dual to compact quantum
groups: see \cite{PodlesWoronowicz} and~\cite{VanDaele2}. Algebraic
quantum groups are (multiplier) Hopf $*$-algebras ``with positive
integrals''. Many of these requirements are not addressed here; thus,
we restrict to the ``algebraic'' setting. Our goal here is to offer an
algebraic interpretation of quantum groups within our framework.
For example, the initial level of finite quantum groups reduces simply
to direct sums of matrix algebras, the simplest instance being
Example~\ref{eg:bialg-func}. An interesting feature of some finite
quantum groups is that $S^2 = \id$, as in Example~\ref{eg:group-poly}.
In Example~\ref{eg:smallest-2}, notice that $S^2 \neq \id_H = S^4$.
See~\cite{Taft} for examples of finite-dimensional Hopf algebras over
a field, whose antipodes have arbitrary even orders greater than~$4$.

The most elegant definition of compact quantum group is the one given
by Woronowicz: a \textit{compact quantum group} is a pair $(A, \Dl)$,
where $A$ is a unital $C^*$-algebra and $\Dl \: A \to A \ox A$ is a
unital $*$-homomorphism, called comultiplication, such that
\begin{enumerate}[label={(\roman*)}]
\item 
$(\Dl \ox \id_A) \Dl = (\id_A \ox \Dl)\Dl : A \to A \ox A \ox A$, and
\item 
the vector spaces $\linspan \set{(a \ox 1)\Dl(b) : a,b \in A}$ and
$\linspan \set{\Dl(a)(1 \ox b) : a,b \in A}$, denoted respectively by
$(A \ox 1)\Dl(A)$ and $\Dl(A)(1 \ox A)$, are dense in $A \ox A$.
Compare, for example, \cite{VanDaele, NeshveyevTuset}.
\end{enumerate}

Property~(i) is just the coassociativity of the comultiplication as in
any bialgebra, and it is purely algebraic. Observe that no counit or
antipode appears in the definition. Remember that in a Hopf algebra
one could omit explicit mention of $\eps$ in the data structure
because of the relations $\mu \circ (\id \ox S) \circ \Dl = \eps 1_H 
= \mu \circ (S \ox \id) \circ \Dl$. Similarly for $S$; indeed, from
the proof of Proposition~\ref{pr:Hopf-and-can-map},
$T \: \End_H^H(H \ox H) \to \End(H)$ is an antihomomorphism satisfying
$T(\can_H) = \id_H$, with $\can_H \in \End_H^H(H \ox H)$ the canonical
map. Hence, there is a unique $S \in \End(H)$ such that
$T(\can_H^{-1}) = S$. As stated in Eq.~(4.7) after Theorem~4.8
of~\cite{BohmBook}: ``for a bialgebra to be a Hopf algebra is a
\textit{property, not a further structure}''. See also Proposition
1.3.22 of~\cite{Timmermann}.

Property~(ii) is the \textit{cancellation property}. The density
requirement appeals to an analytical setting (as is the case of
$C^*$-algebras, von~Neumann algebras, etc.). Nevertheless, we can say
something about this cancellation property from an algebraic point of
view.

\begin{proposition} 
\label{pr:Hopfalgebra-algebraic-quantum-group}
Any Hopf algebra $H$ satisfies the following properties, which are
algebraic versions of the requirements for a compact quantum group.
\begin{enumerate}[label={\textup{(\roman*$'$)}}]
\item 
$(\Dl \ox \id_H) \Dl = (\id_H \ox \Dl)\Dl : H \to H \ox H \ox H$, and
\item 
$(H \ox 1)\Dl(H) = H \ox H = \Dl(H)(1 \ox H)$. 
\end{enumerate}
We shall call \textup{(ii$'$)} the cancellation property as well.
\end{proposition}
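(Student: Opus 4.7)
The plan is straightforward. Property (i$'$) is immediate: coassociativity of $\Dl$ is part of the definition of a bialgebra, so nothing needs to be shown for any Hopf algebra. The entire content of the proposition lies in property (ii$'$), and the key observation is that the antipode provides the ``inverse-like'' elements that implement cancellation.

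For (ii$'$), I would proceed by exhibiting, for each elementary tensor $h \ox h' \in H \ox H$, an explicit preimage in each of the two spaces. For the right identity $\Dl(H)(1 \ox H) = H \ox H$, I would propose the formula
\[
h \ox h' = \Dl(h_{(1)})\bigl(1 \ox S(h_{(2)})\,h'\bigr),
\]
and for the left identity $(H \ox 1)\Dl(H) = H \ox H$, the analogous formula
\[
h \ox h' = \bigl(h\,S(h'_{(1)}) \ox 1\bigr)\,\Dl(h'_{(2)}).
\]
Each displayed expression shows that an arbitrary simple tensor lies in the respective linear span, so the spans coincide with $H \ox H$. The reverse inclusions are trivial since $(H \ox 1)\Dl(H)$ and $\Dl(H)(1 \ox H)$ are by definition subspaces of $H \ox H$.

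The verification of each formula is routine: expand using the bialgebra structure (the products in $H \ox H$ are componentwise), invoke coassociativity to regroup Sweedler indices, then apply the antipode axiom $h_{(1)} S(h_{(2)}) = \eps(h)\,1 = S(h_{(1)})\,h_{(2)}$ together with the counit property $h_{(1)} \eps(h_{(2)}) = h = \eps(h_{(1)}) h_{(2)}$. Extension to a general tensor $\sum h^i \ox {h'}^i$ is by linearity. There is no genuine obstacle here; the work is conceptual rather than technical, and the ``hard'' step is only recognizing that one must multiply by $S$ on a Sweedler leg in order to collapse one factor via the antipode axiom.

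As a remark, I would note that this result also follows conceptually from Proposition~\ref{pr:Hopf-and-can-map}: the canonical map $\can_H(h \ox h') = h\,{h'}_{\!(1)} \ox {h'}_{\!(2)} = (h \ox 1)\Dl(h')$ is bijective precisely because $H$ is a Hopf algebra, and its image is by construction $(H \ox 1)\Dl(H)$; the other equality is obtained symmetrically by considering the left-handed canonical map $h \ox h' \mapsto h_{(1)} \ox h_{(2)} h' = \Dl(h)(1 \ox h')$, whose two-sided bijectivity follows from the same antipode by the same argument (cf.\ the remark after Proposition~\ref{pr:Hopf-and-can-map} citing Timmermann's $T_1,T_2$). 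Either presentation will serve; I would favor the explicit formulas because they make the role of $S$ entirely transparent.
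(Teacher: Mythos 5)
Your proposal is correct, and it is essentially the paper's argument: the paper deduces $(H\ox 1)\Dl(H)=\can_H(H\ox H)=H\ox H$ from the surjectivity of the canonical map (Proposition~\ref{pr:Hopf-and-can-map}), and your explicit preimage formulas $h\ox h'=\bigl(h\,S({h'}_{\!(1)})\ox 1\bigr)\Dl({h'}_{\!(2)})$ and its left-handed analogue are exactly what one gets by unwinding the inverse $\can_H^{-1}(h\ox h')=h\,S({h'}_{\!(1)})\ox {h'}_{\!(2)}$ constructed in Lemma~\ref{lm:antipode-vs-can}. Both formulas check out, and your closing remark correctly identifies the canonical-map route that the paper actually takes.
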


\begin{proof}
From Proposition~\ref{pr:Hopf-and-can-map} we know that in every Hopf
algebra $H$, the canonical map
$\can_H \: H \ox H \to H \ox H : h \ox h' \mapsto (h \ox 1)\Dl(h')$ is
invertible. It follows that
$(H \ox 1)\Dl(H) = \can(H \ox H) = H \ox H$ in any Hopf algebra.
Similarly, $H \ox H = \Dl(H)(1 \ox H)$.
\end{proof}

In case the unital algebra $A$ is generated by elements $u_{ij}$, for
$i,j \in \{1,\dots,n\}$, where $[u_{ij}]_{i,j}$ is an invertible
matrix in $M_n(A)$ with inverse $[v_{kl}]_{k,l}$ and whose coproduct
satisfies $\Dl(u_{ij}) = \sum_k u_{ik} \ox u_{kj}$, the pair $(A,\Dl)$
is called a \textit{matrix quantum group}.

\begin{proposition} 
\label{pr:co-quant-mat-group}
Every matrix quantum group satisfies 
$(A \ox 1)\Dl(A) = \Dl(A)(1 \ox A) = A \ox A$.
\end{proposition}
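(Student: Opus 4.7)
The plan is to establish the first identity by showing $1 \ox A \subseteq W := (A \ox 1)\Dl(A)$; since $W$ is visibly a left $(A \ox 1)$-module, this will give $A \ox A = (A \ox 1)(1 \ox A) \subseteq W$. The second identity $\Dl(A)(1 \ox A) = A \ox A$ goes through by a fully symmetric argument.

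Two ingredients drive the proof. First, $W$ is in fact an $(A \ox 1)$-$\Dl(A)$-bimodule: the left structure is built into the definition, while the right structure follows from $\Dl$ being a multiplicative map, i.e.\ $\Dl(b)\Dl(b') = \Dl(bb')$. Second, the invertibility of $U = [u_{ij}]$ with inverse $V = [v_{ij}]$ gives the key ``matrix'' identity $(V \ox 1)\Dl(U) = 1 \ox U$, which entrywise reads
\[
\sum_l (v_{il} \ox 1)\Dl(u_{lj})
= \sum_{l,k} v_{il} u_{lk} \ox u_{kj}
= 1 \ox u_{ij},
\]
so each $1 \ox u_{ij}$ belongs to $W$.

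I would then induct on the length of a monomial in the generators $u_{ij}$ to obtain $1 \ox a \in W$ for every $a \in A$. The base case $1 \ox 1 = (1 \ox 1)\Dl(1)$ is trivial; for the inductive step, assuming $1 \ox a \in W$, the right $\Dl(A)$-action yields $(1 \ox a)\Dl(u_{lj}) \in W$, and left multiplication by $v_{il} \ox 1$ followed by summation over $l$ gives
\[
\sum_l (v_{il} \ox 1)(1 \ox a)\Dl(u_{lj})
= \sum_{l,k} v_{il} u_{lk} \ox a u_{kj}
= 1 \ox a u_{ij},
\]
completing the induction. The second identity rests on the companion formula $\sum_l \Dl(u_{il})(1 \ox v_{lj}) = u_{ij} \ox 1$ (from $\sum_l u_{kl} v_{lj} = \dl_{kj}$), an analogous induction on $W' := \Dl(A)(1 \ox A)$ viewed as a left $\Dl(A)$-, right $(1 \ox A)$-bimodule, and the identification $A \ox A = (A \ox 1)(1 \ox A)$. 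The one mildly subtle point is the inductive step on this second side: since $W'$ is not a left $(A \ox 1)$-module, one must expand $a \ox 1 = \sum_m \Dl(b_m)(1 \ox c_m)$ and then insert the formula for $u_{ij} \ox 1$ into $(a \ox 1)(u_{ij} \ox 1) = \sum_m \Dl(b_m)(u_{ij} \ox c_m) = \sum_{m,l} \Dl(b_m u_{il})(1 \ox v_{lj} c_m)$ to land back inside $W'$; everything else is straightforward bookkeeping with the two relations $UV = VU = I$.
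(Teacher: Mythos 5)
Your proof is correct and takes essentially the same route as the paper: both arguments rest on the identities $\sum_l (v_{il} \ox 1)\Dl(u_{lj}) = 1 \ox u_{ij}$ and $\sum_l \Dl(u_{il})(1 \ox v_{lj}) = u_{ij} \ox 1$, combined with the observation that $(A \ox 1)\Dl(A)$ and $\Dl(A)(1 \ox A)$ absorb the appropriate left and right multiplications. Your induction on the length of monomials is just a repackaging of the paper's argument that $\set{a \in A : 1 \ox a \in (A \ox 1)\Dl(A)}$ is a subalgebra containing all the generators (and symmetrically for the other side).
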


\begin{proof}
The set 
$B = B(A) = \set{a \in A : 1 \ox a = \sum_i (a_i \ox 1)\Dl(b_i)
\mot{for some} a_i,b_i}$ is a subalgebra of~$A$. Indeed, 
\begin{align*}
1 \ox aa' &= (1 \ox a)(1 \ox a')
= \sum_i (1 \ox a)(a'_i \ox 1)\Dl(b_i')
= \sum_i ({a'}_{\!i} \ox a)\Dl({b'}_{\!i})
\\
&= \sum_{i,j} ({a'}_{\!i} \ox 1)(a_j \ox 1) \Dl(b_j) \Dl(b_i')
= \sum_{i,j} ({a'}_{\!i} a_j \ox 1) \Dl(b_j {b'}_{\!i}).
\end{align*}
Similarly, $B' = \set{a \in A : a \ox 1 = \sum_i \Dl(a_j)(1 \ox b_j) 
\mot{for some} a_j,b_j}$ is a subalgebra of~$A$.

Each generator $u_{ij}$ belongs to $B$. To see this, notice that
\[
1 \ox u_{ij} = \sum_k \dl_{ik} \ox u_{kj}
= \sum_{k,l} v_{il} u_{lk} \ox u_{kj}
= \sum_{k,l} (v_{il} \ox 1)(u_{lk} \ox u_{kj})
= \sum_l (v_{il} \ox 1) \Dl(u_{lj}).
\]
As a result, $B = A$. Similarly,
$u_{ij} \ox 1 = \sum_k \Dl(u_{ik}) (1 \ox v_{kj})$ and $B' = A$.

Lastly, the relation
\[
u_{ij} \ox u_{kl} = (u_{ij} \ox 1) (1 \ox u_{kl})
= \sum_r (u_{ij} \ox 1) (v_{kr} \ox 1) \Dl(u_{rl})
= \sum_r (u_{ij} v_{kr} \ox 1) \Dl(u_{rl})
\]
implies that $A \ox A$ is contained in $(A \ox 1) \Dl(A)$.
Similarly, $A \ox A \subseteq \Dl(A) (1 \ox A)$ and then
$A \ox A = (A \ox 1)\Dl(A) = \Dl(A)(1 \ox A)$.
\end{proof}

\begin{example} 
\label{eg:SLq2-cancellation}
In Example~\ref{eg:SLq2} we exhibited $SL_q(2)$, with
$q \in \bC \setminus \{0\}$, as a noncommutative deformation of the
coordinate algebra $SL(2)$ for the special linear group $SL_2(\bC)$.
Both $SL_q(2)$ and $SL(2)$ are matrix quantum groups since
\[
\Dl \twobytwo{a}{b}{c}{d}
= \twobytwo{a}{b}{c}{d} \ox \twobytwo{a}{b}{c}{d}, \word{with}
[v_{ij}]_{i,j} = \twobytwo{d}{-b}{-c}{a}.
\]
By Proposition~\ref{pr:co-quant-mat-group}, they satisfy the
cancellation property.
\end{example}

\begin{example} 
\label{eg:GL2-Hopf-algebra}
The group $SL_2(\bC)$ sits naturally inside the \textit{special linear
group} $GL_2(\bC)$, the $2 \x 2$ invertible complex matrices with
(commutative) coordinate algebra
\[
GL(2) = \sO(G_2(\bC)) := \bC[a,b,c,d,t]/\bigl( t(ad - bc) -1 \bigr),
\]
with (noncocommutative) Hopf algebra structure given by the morphisms 
\begin{align*}
\Dl \twobytwo{a}{b}{c}{d}
&= \twobytwo{a}{b}{c}{d} \ox \twobytwo{a}{b}{c}{d}, \quad
\Dl(t) = t \ox t,
\\
S \twobytwo{a}{b}{c}{d} &= t \twobytwo{d}{-b}{-c}{a},  \quad
S(t) = ad - bc,
\end{align*}
and $\eps(a) = \eps(d) = \eps(t) = 1$, $\eps(b) = \eps(c) = 0$.

$GL(2)$ is not a matrix quantum group, because it has five generators.
Nevertheless, we can write
\[
\Dl \begin{pmatrix} 
a & b & 0 \\ c & d & 0 \\ 0 & 0 & t \end{pmatrix}
= \begin{pmatrix} 
a & b & 0 \\ c & d & 0 \\ 0 & 0 & t \end{pmatrix}
\ox \begin{pmatrix} 
a & b & 0 \\ c & d & 0 \\ 0 & 0 & t \end{pmatrix}
\word{with} \begin{pmatrix} 
a & b & 0 \\ c & d & 0 \\ 0 & 0 & t \end{pmatrix}^{-1}
= \begin{pmatrix} 
dt & -bt & 0 \\ -ct & at & 0 \\ 0 & 0 & ad-bc \end{pmatrix},
\]
and apply the same formula
$1 \ox u_{ij} = \sum_l (v_{il} \ox 1) \Dl(u_{lj})$ as in the proof of
Proposition~\ref{pr:co-quant-mat-group},
to conclude that every generator is in $B(GL(2))$.

Also, the relation
\[
u_{ij} \ox u_{kl} = (u_{ij} v_{k1} \ox 1) \Dl(u_{1l})
+ (u_{ij} v_{k2} \ox 1) \Dl(u_{2l})
+ (u_{ij} v_{k3} \ox 1) \Dl(u_{3l})
\]
holds and implies that $B(GL(2))$ is a subalgebra of $GL(2)$. As a
result, $GL(2)$ satisfies the cancellation property, which can also be
deduced from $GL(2)$ being a Hopf algebra.
\end{example}

Let us give a deformation of $GL(2)$ that is neither commutative nor
cocommutative. Denote by $M_q(2)$ the complex algebra of $2 \x 2$
matrices on four generators $a,b,c,d$ subject to the relations:
\[
ba = q ab, \quad ca = q ac, \quad db = q bd, \quad dc = q cd,
\quad
bc = cb, \quad ad - q^{-1} bc = da - q cb. 
\]
With $q = 1$ we recover the complex algebra of $2 \x 2$ matrices on
four commuting generators $a,b,c,d$.

\begin{example} 
\label{eg:Mq2} 
Define the algebra
$GL_q(2) := M_q(2)[t] /\bigl( t(ad - q^{-1}bc) -1 \bigr)$, with
coproduct and counit given by the same formulas as for $GL(2)$. Its
antipode is given by
\[
S\begin{pmatrix}
a & b
\\
c & d
\end{pmatrix}
=
t\begin{pmatrix}
d & -qb
\\
-q^{-1}c & a
\end{pmatrix}, \quad
S(t) = ad - q^{-1}bc.
\]
Note that $GL_q(2)$ is not commutative or cocommutative.

As with $GL(2)$, $GL_q(2)$ is not a matrix quantum group. However,
with
\[
\begin{pmatrix}
a & b & 0 \\ c & d & 0 \\ 0 & 0 & t \end{pmatrix}^{-1}
= \begin{pmatrix}
dt & -qbt & 0 \\ -q^{-1}ct & at & 0 \\ 0 & 0 & ad-q^{-1}bc
\end{pmatrix},
\]
and, as before, $(GL_q(2) \ox 1)\Dl(GL_q(2)) 
= \Dl(GL_q(2)) (1 \ox GL_q(2)) = GL_q(2) \ox GL_q(2)$.
\end{example}

The very first definition that accommodates our algebraic approach is
the one due to Drinfel'd in his conference \cite{Drinfeld}, that was
the spark that ignited a gigantic interest in the subject. A
\textit{quantum group} is a noncommutative noncocommutative Hopf
algebra. Some authors, like \cite{Pareigis}, call them \textit{proper
quantum groups}. So far, $GL_q(2)$ and $SL_q(2)$ are proper quantum
groups. Observe that every proper quantum group, being a Hopf algebra,
satisfies the conditions of coassociativity and cancellation.

From Lemma~\ref{lm:antipode-vs-can}, for every Hopf algebra, in
particular every quantum group, the canonical map corresponding to the
coaction $\dl = \Dl$ is bijective, and we conclude the following
result.

\begin{corollary} 
\label{cr:k-HG-extnsion}
For every proper quantum group $H$, $k \subseteq H$ is a Hopf--Galois
extension by the bialgebra~$H$.
\end{corollary}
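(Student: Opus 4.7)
The plan is to observe that this corollary is essentially an immediate assembly of two pieces already in the text: Lemma \ref{lm:antipode-vs-can} and the computation of the coinvariant subalgebra in Example \ref{eg:H-over-H}. Since a proper quantum group is by definition a noncommutative, noncocommutative Hopf algebra, it comes equipped with an antipode $S \: H \to H$ satisfying $\mu(S \ox \id_H)\Dl = \eta\eps = \mu(\id_H \ox S)\Dl$ — this is precisely the hypothesis of Lemma \ref{lm:antipode-vs-can}, applied to the coaction $\dl = \Dl$ of $H$ on itself.

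First I would invoke Example \ref{eg:H-over-H} to identify the coinvariant subalgebra of the coaction $\dl = \Dl \: H \to H \ox H$: if $h \in H^{\co H}$, then $\Dl(h) = h \ox 1_H$, and applying $\eps \ox \id_H$ yields $h = \eps(h) 1_H$, so $H^{\co H} = k \cdot 1_H \equiv k$. This identifies the base algebra of the extension as the ground ring $k$, and the balanced tensor product reduces to the ordinary tensor product $H \ox_k H = H \ox H$.

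Next I would apply Lemma \ref{lm:antipode-vs-can} directly to conclude that the canonical map
\[
\can_H \: H \ox H \to H \ox H, \qquad h \ox h' \mapsto h\,{h'}_{\!(1)} \ox {h'}_{\!(2)},
\]
is bijective, with explicit inverse $h \ox h' \mapsto h\,S({h'}_{\!(1)}) \ox {h'}_{\!(2)}$ supplied by the antipode. Combining this with the identification $H^{\co H} = k$, the definition of a Hopf--Galois extension is satisfied, giving the corollary.

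There is no real obstacle: the result is a formal corollary of Lemma \ref{lm:antipode-vs-can}. The only thing worth emphasizing is that the hypothesis of being a \emph{proper} quantum group (noncommutative and noncocommutative) is not used; any Hopf algebra $H$ — including commutative or cocommutative ones — yields a Hopf--Galois extension $k \subseteq H$ by the same argument. The adjective ``proper'' in the statement merely restricts attention to the genuinely quantum case that motivates the whole section.
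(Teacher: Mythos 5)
Your proof is correct and follows exactly the paper's route: the paper also derives this corollary by combining Example~\ref{eg:H-over-H} (which gives $H^{\co H}=k$) with Lemma~\ref{lm:antipode-vs-can} (bijectivity of $\can_H$ via the antipode), and it likewise remarks that the conclusion holds for every Hopf algebra, proper quantum group or not.
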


All of the examples in subsection~\ref{ssc:Hopf-algebras} about Hopf
algebras are commutative or cocommutative. The next, taken from
\cite{Pareigis}, is referred to as the smallest proper quantum group.

\begin{example} 
\label{eg:smallest}
Consider the algebra (attributed to M. Sweedler) in noncommuting
variables $H_4 := \bF\val{g,x}/(g^2 - 1, x^2, xg + gx)$, with
structure
\begin{alignat*}{3}
\Dl(g) &= g \ox g, & 
\eps(g) &= 1, & 
S(g) &= g^{-1} = g,
\\
\Dl(x) &= x \ox 1 + g \ox x, \quad &
\eps(x) &= 0, \quad &
S(x) &= -gx.
\end{alignat*}
In \cite{Pareigis} it is noted that this is, up to isomorphism, the
only noncommutative ($gx = -xg$) noncocommutative
($\tau \Dl(x) \neq \Dl(x)$) Hopf algebra of dimension~$4$.
\end{example}

The following example is from~\cite{Taft} (see also \cite{VanDaele}).

\begin{example} 
\label{eg:smallest-2}
Let $H'_4 := \bC\val{a,b}/(a^4 - 1, b^2, ab - iba)$ with structure
\begin{alignat*}{3}
\Dl(a) &= a \ox a, & 
\eps(a) &= 1, & 
S(a) &= a^{-1} = a^3,
\\
\Dl(b) &= a \ox b + b \ox a^{-1}, \quad & 
\eps(b) &= 0, \quad & 
S(b) &= ib.
\end{alignat*}
With $\bF = \bC$, $g = a^2$ and $x = ab$, $H_4$ is a Hopf subalgebra
of $H'_4$. Here $S^2 \neq \id_H = S^4$.
\end{example}

In Example~\ref{eg:enveloping-Lie-algebra}, we noted that the
universal enveloping algebra $\sU(\g)$ of a finite-dimensional Lie
algebra $\g$ is a cocommutative Hopf algebra. As a particular case,
consider the Lie algebra of $2 \x 2$ matrices denoted by
$\gsl(2) = \gsl_2(\bC)$, generated by
\[
H = \twobytwo{1}{0}{0}{-1}, \quad
E = \twobytwo{0}{1}{0}{0}, \quad
F = \twobytwo{0}{0}{1}{0},
\]
which satisfy the relations $[H,E] = 2E$, $[H,F] = -2F$ and
$[E,F] = H$. Its universal enveloping algebra is $\sU(\gsl(2)) 
= \bC\val{h,e,f}/(he - eh - 2e, hf - fh + 2f, ef - fe - h)$, with Hopf
algebra structure given on the generators by the morphisms
\[
\Dl x = x \ox 1 + 1 \ox x,  \quad  \eps(1) = 1, \quad
\eps(x) = 0,  \quad  S(x) = -x.
\]

\begin{example} 
\label{eg:slq2}
For $q \in \bC \setminus \{-1,0,1\}$, define $\sU_q(\gsl(2))$ as the
noncommutative algebra generated by $h, h^{-1}, e, f$ subject only to
the relations
\[
h e h^{-1} = q^2 e, \quad
h f h^{-1} = q^{-2} f, \quad 
ef - fe = \frac{h - h^{-1}}{q - q^{-1}}\,.
\]
It is an infinite-dimensional Hopf algebra with structure given by
\begin{gather*}
\Dl(h) = h \ox h, \quad
\Dl(e) = 1 \ox e + e \ox h, \quad
\Dl(f) = h^{-1} \ox f + f \ox 1,
\\
\eps(h) = 1, \quad \eps(e) =  \eps(f) = 0,
\\
S(h) = h^{-1}, \quad  S(e) = -eh^{-1}, \quad S(f) = -hf.
\end{gather*}
It is noncocommutative and hence, a proper quantum group. Also note
that $S^2 \neq \id$.%
\footnote{%
If $H$ is either commutative or cocommutative, then $S^2 = \id$, 
without necessarily assuming that $S$ is invertible.}
\end{example}


\subsection{Quantum principal bundles} 
\label{ssc:Quantum-principal-bundles}

In \cite{Brzezinski94}, based on the work in \cite{BrzezinskiMajid},
the notion of a translation map is dualized to the noncommutative
setting, showing that the notion of a `quantum principal bundle' is
tantamount to the existence of this generalized translation map.
Recall that by \cite{Husemoller}, a continuous translation map is
equivalent to having a (topological) principal bundle (with Hausdorff
base space).%
\footnote{%
The relations between the translation map, the canonical map and the
Hausdorff topology of the base space are clarified in~\cite{Liguria}.}
Classically, with 
$X \x_G X := \set{(x,y) : x,y\in X,\ \sO_x = \sO_y} \subseteq X \x X$,
the \textit{canonical map} corresponding to the action of $G$ on~$X$
is the surjective map
$\ga \: X \x G \to X \x_G X : (x,g) \mapsto (x, x\.g)$. When the
action is free and thus the canonical map is invertible, its inverse
generates the important \textit{translation map}, as follows. Consider
the composition:
$$
X  \x_G X \longto^{\ga^{-1}} X  \x G \longto^{\pr_2} G
: (x,y) = (x,x\.g) \mapsto (x,g) \mapsto g.
$$
That is to say, if $x,y \in X$ belong to the same orbit there is a
unique group element $g \in G$ such that $y = x\.g$. The map
$\tau \: X \x_G X \to G$, that assigns to the pair $(x,x\.g)$ the
element~$g$, is called the \textit{translation map}.

The generalization in \cite{Brzezinski94} goes along these lines: let
$H$ be a Hopf algebra and $A$ a right $H$-comodule algebra with a
coaction $\dl_A \: A \to A \ox H$. The map
$\chi \: A \ox A \to A \ox H$ is defined by
\[
\chi = (\mu_A \ox \id) \circ (\id \ox \dl_A),  \word{or}
\chi(a \ox a') = a {a'}_{\!(0)} \ox {a'}_{\!(1)}
\mot{for any} a,a' \in A.
\]
Observe that $\chi = \wt{\can}$ from Section~\ref{sec:HG-extensions}.
With a nod to~\cite{BrzezinskiMajid}, the coaction $\dl_A$ is called
\textit{free} if $\chi$ is a surjection. Those authors define also a
map $^\sim \: A_2 \to A \ox \ker\eps$ by $^\sim := \chi|_{A_2}$.
There, for any algebra $A$, $A_2 \subset A \ox A$ denotes the kernel
of the multiplication $\mu_A$ in $A$. Let $B = A^{\co H}$ (denoted
$A^H$ in~\cite{Brzezinski94}). The coaction $\Dl_A$ of $H$ on $A$ is
called \textit{exact} if $\ker^\sim = A B_2 A$ and
\cite{BrzezinskiMajid} defines a \textit{quantum principal bundle}
with structure quantum group $H$ and base $B = A^{\co H}$ if the
coaction $\Dl_H$ is free and exact. Then, for a quantum principal
bundle $A(B,H)$, the linear map $\tau \: H \to A \ox_B A$, given by
$\tau(h) := \sum_{i\in I} a_i \ox_B {a'}_{\!i}$, where
$\sum_{i\in I} a_i \ox {a'}_{\!i} = \chi^{-1}(1 \ox h)$, is called a
translation map.%
\footnote{This $\tau$ should not be confused with the flip map
$\tau(x \ox y) := y \ox x$.}

Their Lemma~3.2 states that if $A$ is a right $H$-comodule algebra
with a free coaction $\dl_A \: A \to A \ox H$ and $B = A^{\co H}$ and
if there is a translation map $\tau \: H \to A \ox_B A$ in $A$, then
the coaction $\dl_A$ is exact and hence $A(B,H)$ is a quantum
principal bundle.

As an example, recall Example 4.2 of~\cite{BrzezinskiMajid}: let $H$
be a Hopf algebra and $A$ an $H$-comodule algebra with invariant
subalgebra $B$. Suppose that there exists a map $\Phi \: H \to A$ such
that $\Dl_A \circ \Phi = (\Phi \ox \id) \circ \Dl_A$,
$\Phi(1_H) = 1_A$, so that $\Phi$ is an intertwiner for the right
coaction. Then $A$ is a quantum principal bundle. They call
$A(B,H,\Phi)$ a \textit{trivial bundle with trivialization}~$\Phi$.

\begin{example} 
\label{eg:tensor-product-extension-2}
From Example~\ref{eg:tensor-product-extension} we know that $A \ox H$
is an $H$--Galois extension of an algebra $A$, whenever $H$ is a Hopf
algebra and the coaction from $A \ox H \to A \ox H \ox H$ is given by
$\dl^A = \id_A \ox \Dl_H$. The canonical map and its inverse are given
by
\[
\can \: a \ox h \ox h' \mapsto a \ox h {h'}_{\!(1)} \ox {h'}_{\!(2)};
\qquad
\can^{-1} \: a \ox h \ox h' 
\mapsto a \ox h S({h'}_{\!(1)}) \ox {h'}_{\!(2)},
\]
so that $\tau(h) = \can^{-1}(1_A \ox 1_H \ox h)
= 1_A \ox S(h_{(1)}) \ox h_{(2)}$. In this situation,
$\chi \: (A \ox H) \ox (A \ox H) \to A \ox H \ox H$ is defined by
$\chi(a \ox h \ox a' \ox h') = aa' \ox {h'}_{\!(1)} \ox {h'}_{\!(2)}$,
which is evidently surjective, making $\Dl_{A\ox H}$ free. It is quite
complicated to identify the kernel of $\mu_{A\ox H}$, as we do not
expect a general $(A \ox H) (A,H)$ to be a quantum principal bundle.
\end{example}

\begin{example} 
\label{eg:Laurent-bialgebra-Hopf-2}
From Example~\ref{eg:Laurent-bialgebra-Hopf}, with
$A = H = F[X,X^{-1}]$ and $\can_H^{-1}(X^k \ox X^l) = X^{k-l}\ox X^l$,
we know that $\tau(X^l) = X^{-l}\ox X^l$. Also,
$\chi \: H \ox H \to H \ox H$ is defined by
$\chi(X^{k-l} \ox X^l) = X^k \ox X^l$, which is surjective, making
$\Dl$ free.

To find the kernel of $\mu$ we observe that
\[
0 = \mu\biggl( \sum_{k,l} f_{k,l} X^{k-l} \ox X^l \biggr)
= \sum_{k,l} f_{k,l} X^k 
\word{if and only if each} \sum_l f_{k,l} = 0.
\]
Thus, $H_2 = \Set{\sum_{k,l} f_{k,l} X^{k-l} \ox X^l
: \sum_l f_{k,l} = 0 \mot{for each fixed} k \in \bZ}$. Also,
$\ker(\eps) = \Set{\sum_j f_j X^j : \sum f_j = 0}$, implying that
$^\sim \: H_2 \to H \ox \ker\eps$ is given by
\[
\chi\bigr|_{H_2} \biggl( \sum_{k,l} f_{k,l} X^{k-l} \ox X^l \biggr)
= \sum_k \biggl( X^k \ox \sum_l f_{k,l} X^l \biggr),
\]
where $\sum_l f_{k,l} = 0$ for each fixed~$k$. Because $B = H^{\co H}
= \set{ \sum_j g_j X^j : \sum_j g_j X^j \ox (X^j - 1) = 0}$, which is
the subalgebra $\{g_0\,1\}$ generated by~$1$, we obtain $B_2 = \{0\}$.
Since $\ker^\sim \neq H B_2 H =\{0\}$, the coaction fails to be exact.
\end{example}

\begin{example} 
\label{eg:bialgebra-bin-S-2}
In Example~\ref{eg:bialgebra-bin-S} we had $A = H = F_b[X]$,
$\Dl(X^n) := \sum_{l=0}^n \binom{n}{l} X^l \ox X^{n-l}$ and
\[
\chi(X^k \ox X^n) = \sum_{l=0}^n \binom{n}{l} X^{k+l} \ox X^{n-l}
= (X^k \ox 1) \sum_{l=0}^n \binom{n}{l} X^l \ox X^{n-l}.
\]
In particular, $\chi(X^k \ox 1) = X^k \ox 1$ and
$\chi(X^k \ox X) = X^k \ox X + X^{k+1} \ox 1$. The relation
\[
\chi(X^k \ox X^n) 
= X^k \ox X^n + \sum_{l=1}^n \binom{n}{l} X^{k+l} \ox X^{n-l}
\word{for} n > 0
\]
shows by induction that $\chi$ is surjective and $\Dl$ is free.

Similarly to Example~\ref{eg:Laurent-bialgebra-Hopf-2},
$\ker(\eps) = \set{\sum_{j>0} f_j X^j}$, and an element of $H_2$ is of
the form $\sum_{r \geq 0} \sum_{l=0}^r f_{r,l} X^{r-l} \ox X^l$, with
$\sum_{l=0}^r f_{r,l} = 0$ for each $r \in \bN$. In particular,
$f_{0,0} = 0$ and
\[
H_2 = \SET{ \sum_{r>0} \sum_{l=0}^r f_{r,l} X^{r-l} \ox X^l 
: \sum_{l=0}^r f_{r,l} = 0 \mot{for each fixed} r > 0}.
\]
In this case, $^\sim \: H_2 \to H \ox \ker\eps$ is given by
\begin{align*}
\MoveEqLeft{
\chi\bigr|_{H_2} \biggl( \sum_{r>0} f_{r,0} X^r \ox 1
+ \sum_{r>0} \sum_{l=1}^r f_{r,l} X^{r-l} \ox X^l \biggr)}
\\
&= \sum_{r>0} f_{r,0} X^r \ox 1 
+ \sum_{r>0} \sum_{l=1}^r f_{r,l} \biggl( X^{r-l} \ox X^l
+ \sum_{s=1}^l \binom{l}{s} X^{r-l+s} \ox X^{l-s} \biggr)
\\
&= \sum_{r>0} \sum_{l=0}^r f_{r,l} X^{r-l} \ox X^l
+ \sum_{r>0} \sum_{l=1}^r f_{r,l} \sum_{s=1}^l
\binom{l}{s} X^{r-l+s} \ox X^{l-s},
\end{align*}
where $\sum_l f_{r,l} = 0$ for each fixed $r > 0$; and we want to
compute its kernel. By the grading provided by the index $r$, it is
enough to check that
\[
0 = f_{r,r} 1 \ox X^r + \sum_{l=0}^{r-1} f_{r,l} X^{r-l} \ox X^l
+ \sum_{l=1}^r f_{r,l} \sum_{s=1}^l \binom{l}{s} X^{r-l+s} \ox X^{l-s}
\word{implies} f_{r,r} = 0;
\]
and then
\begin{align*}
0 &= \sum_{l=0}^{r-1} f_{r,l} X^{r-l} \ox X^l
+ \sum_{l=1}^{r-1} f_{r,l} 
\sum_{s=1}^l \binom{l}{s} X^{r-l+s} \ox X^{l-s}
\\
&= f_{r,r-1} X \ox X^{r-1} + \sum_{l=0}^{r-2} f_{r,l} X^{r-l} \ox X^l
+ \sum_{l=1}^{r-1} f_{r,l} 
\sum_{s=1}^l \binom{l}{s} X^{r-l+s} \ox X^{l-s},
\end{align*}
showing that $f_{r,r-1} = 0$; and so on. Induction shows that
$\ker^\sim = \{0\}$.

That $B = H^{\co H}$ is the subalgebra $\{f_0\,1\}$ generated by~$1$,
follows from
\[
\sum_r f_r \biggl( X^r \ox 1
+ \sum_{l=1}^r \binom{r}{l} X^{r-l} \ox X^l \biggr) 
= \sum_r f_r X^r \ox 1 
\iff  \sum_{r>0} f_r \sum_{l=1}^r \binom{r}{l} X^{r-l} \ox X^l = 0.
\]
Thus, the grading given by the terms $X^{r-l} \ox X^l$ implies that
$f_r = 0$ for each $r > 0$. As a result, $B_2 = \{0\}$, and
$\ker^\sim = H B_2 H$. By~\cite{BrzezinskiMajid},
$\{f_0\,1\} \subseteq F_b[X]$ is a quantum principal bundle.
\end{example}

\begin{example} 
\label{ex:OofA2}
Back to Example~\ref{eg:O-of-A}: we know that
$\sO(X/G) \subseteq \sO(X)$ is a Hopf--Galois extension by the
bialgebra $\sO(G)$, where the coaction of $\sO(G)$ over $\sO(X)$ is
given by $(\dl f)(x,g) := f(x\.g)$ and
$\sO(X)^{\co\sO(G)} = \sO(X/G)$. The map 
$\can \: \sO(X) \ox_B \sO(X) \to \sO(X) \ox \sO(G)$ is a bijection,
making $\chi = \wt{\can} \: \sO(X) \ox \sO(X) \to \sO(X) \ox \sO(G)
\equiv \sO(X \x_G X)$ a surjection and $\dl$ a free coaction. As
before, the identification
$f \ox f' \equiv [(x,y) \mapsto f(x)\,f'(y)]$ shows that
$\chi(f \ox f') = (f \ox f')|_{X \x_G X}$. Here
$\chi(f \ox f')(x,x\.g) = f(x) f'(x\.g) = f''(x\.g)
= 1 \ox f'(x, x\.g)$ if and only if $\tau(f'') = 1 \ox f''$. Because
in $\sO(X)$ the algebra structure is given by pointwise
multiplication, the coaction fails to be exact.
\end{example}

A refreshed treatment is given in
\cite{AschieriBieliavskyPaganiSchenkel} and
\cite{AschieriLandiPagani}. Given a Hopf algebra $H$, a
\textit{quantum subgroup} of $H$ is a Hopf algebra $H'$ together with
a surjective bialgebra homomorphism $\pi \: H \to H'$. The restriction
via $\pi$ of the coproduct of $H$,
$\dl_H := (\id \ox \pi)\circ \Dl \: H \to H \ox H'$, induces on $H$
the structure of a right $H'$-comodule algebra. The subalgebra $B :=
H^{\co H'} \subseteq H$ of coinvariants is called a \textit{quantum
homogeneous $H$-space}. When the associated canonical map 
$\chi \: H \ox_B H \to H \ox H'
: h \ox_B h' \mapsto h {h'}_{\!(1)} \ox \pi({h'}_{\!(2)})$ is 
bijective, i.e., $B \subseteq H$ is a Hopf--Galois extension, $H$ is
called a \textit{quantum principal bundle} over the quantum
homogeneous space~$B$. It is less restrictive than the requirement in
\cite{Brzezinski94} about the exactness of the coaction. Examples
\ref{eg:Laurent-bialgebra-Hopf-2}, \ref{eg:bialgebra-bin-S-2}
and~\ref{ex:OofA2} now fit the description (with $\pi = \id_H$).

In \cite{AschieriLandiPagani} (aiming at the notion of group of
noncommutative gauge transformations) the authors study conditions for
the canonical map $\chi$ to be an algebra map in the setting of
coquasitriangular Hopf algebras (where the category of $H$-comodule
algebras is a monoidal category). In that context, they show that the
canonical map is a morphism and its inverse is determined by its
restriction to $1 \ox H$, named \textit{translation map}, 
$\tau = \chi^{-1}|_{1\ox H'} \: H' \to H \ox_B H$.

\begin{example} 
\label{eg:smallest-3}
In Example~\ref{eg:smallest-2}, consider 
$H = H'_4 = \linspan_\bC\{1, a, a^2, a^3, b, a^2 b, a^3 b\}$ and its
subalgebra $H' = \linspan_\bC\{1, a^2\}$ generated by $a^2$, with
structure
\[
\Dl'(a^2) = a^2 \ox a^2, \qquad \eps'(a^2) = 1, \qquad S'(a^2) = a^2.
\]
With the surjective Hopf algebra homomorphism $\pi \: H \to H'$ given
by $\pi(a) = a^2$, $\pi(b) = 0$, $H'$ is a quantum subgroup of~$H'_4$.
In this case,
$\dl_{H'_4} := (\id \ox \pi) \circ \Dl \: H'_4 \to H'_4 \ox H'$ is
given by $\dl_{H'_4}(a) = a \ox a^2$ and $\dl_{H'_4}(b) = b \ox a^2$.
The subalgebra
\[
B := (H'_4)^{\co H'} := \set{h \in H'_4 : \dl_{H'_4}(h) = h \ox 1}
= \linspan_\bC \{1, a^2, a^3 b\}
\]
is the subalgebra of $H'_4$ generated by $a^2$ and $a^3b$, and is a
quantum homogeneous $H'_4$-space. Since
$H'_4 \ox_B H'_4 = \set{h \ox_B 1, h \ox_B a^2 : h \in H'_4}$, with
$\dim_\bC(H'_4 \ox_B H'_4) = 16 = \dim_\bC(H'_4 \ox H')$, where
$H'_4 \ox H' = \set{h \ox 1, h \ox a^2 : h \in H'_4}$, it is clear 
that the associated canonical map
$\chi \: H'_4 \ox_B H'_4 \to H'_4 \ox H'
: h \ox_B h' \mapsto h {h'}_{\!(1)} \ox \pi({h'}_{\!(2)})$ satisfies
$\chi(h \ox 1) = h \ox 1$ and $\chi(h \ox a^2) = ha^2 \ox a^2$; we
know that $\chi$ is bijective, showing that $B \subseteq H'_4$ is a
quantum principal bundle over the quantum homogeneous space~$B$.

In this case, $\tau(1) = \chi^{-1}(1 \ox 1) = 1 \ox 1$ and
$\tau(a^2)  = \chi^{-1}(1\ox a^2) = a^2 \ox a^2$.
\end{example}

\begin{example} 
\label{eg:pi-GL2-to-SL2}
Consider $\pi \: H = GL_2(\bC) \to H' = SL_2(\bC)$ where
$\pi(a) = a$, $\pi(b) = b$, $\pi(c) = c$, $\pi(d) = d$ and
$\pi(t) = 1$. We can write
\[
\dl_H = (\id \ox \pi) \circ \Dl \begin{pmatrix}
a & b & 0 \\ c & d & 0 \\ 0 & 0& t \end{pmatrix}
= (\id \ox \pi) \begin{pmatrix}
a & b & 0 \\ c & d & 0 \\ 0 & 0 & t \end{pmatrix}
\ox \begin{pmatrix}
a & b & 0 \\ c & d & 0 \\ 0 & 0 & t \end{pmatrix}
= \begin{pmatrix}
a & b & 0 \\ c & d & 0 \\ 0 & 0 & t \end{pmatrix}
\ox \begin{pmatrix}
a & b & 0 \\ c & d & 0 \\ 0 & 0 & 1 \end{pmatrix}
\]
to deduce that $B = H^{\co H'} = \{\dl_{GL_2(\bC)} (h) = h \ox 1\}$ is
the subalgebra generated by~$t$. By switching every appearance of $t$
in the right `leg' to the left one, $GL_2(\bC) \ox_B GL_2(\bC)$ is
generated by
$\set{h \ox_B h' : h \in \{a,b,c,d,t\},\ h' \in \{a,b,c,d\}}$. Because
$\pi(h') = h'$ for every $h' \in \{a,b,c,d\}$, the canonical map
$\chi \: GL_2(\bC) \ox_B GL_2(\bC) \to GL_2(\bC) \ox SL_2(\bC)$ is
given by $\chi(h \ox_B h') = h {h'}_{\!(1)} \ox \pi({h'}_{\!(2)})
= h {h'}_{\!(1)} \ox {h'}_{\!(2)} = (h \ox 1) \Dl_{SL_2(\bC)}(h')$. As
a result, it is bijective, and $GL_2(\bC)$ is a quantum principal
bundle over its subalgebra generated by~$t$.

For $\tau \: SL_2(\bC) \to GL_2(\bC) \ox_B GL_2(\bC)$ with
$\tau(h) = \chi^{-1}(1 \ox h)$, one finds that
\begin{align*}
\tau(a) &= \chi^{-1}(1 \ox a) = \chi^{-1}((ad - bc) \ox a)
= d \ox_B a - b \ox_B c,
\\
\tau(b) &= \chi^{-1}(1 \ox b) = \chi^{-1}((ad - bc) \ox b)
= d \ox_B b - b \ox_B d,
\\
\tau(c) &= \chi^{-1}(1 \ox c) = \chi^{-1}((ad - bc) \ox c)
= a \ox_B c - c \ox_B a,
\\
\tau(d) &= \chi^{-1}(1 \ox d) = \chi^{-1}((ad - bc) \ox d)
= a \ox_B d - c \ox_B b,
\end{align*}
on the generators of $SL_2(\bC)$.
\end{example}

\begin{example} 
\label{eg:pi-GL2-to-SL2-two}
We now consider the noncommutative deformations of $SL_2(\bC)$ and
$GL_2(\bC)$ (see Examples~\ref{eg:SLq2} and~\ref{eg:Mq2} above). We
take $H' = SL_q(2) = M_q(2)/\bigl( (ad - q^{-1}bc) -1 \bigr)$ and
$H = GL_q(2) = M_q(2)[t]/\bigl( t(ad - q^{-1}bc) -1 \bigr)$. As in
Example~\ref{eg:pi-GL2-to-SL2}, we define the map
$\pi \: H = GL_q(2) \to H' = SL_q(2)$ by $\pi(a) = a$,
$\pi(b) = b$, $\pi(c) = c$, $\pi(d) = d$, $\pi(t) = 1$. Recall that
$SL_q(2)$ and $GL_q(2)$ have the same comultiplication and counit as
$SL(2)$ and $GL(2)$, respectively, with antipodes
\[
S_{SL_q(2)} \twobytwo{a}{b}{c}{d} 
= \twobytwo{d}{-qb}{-q^{-1}c}{a},  \quad
S_{GL_q(2)} \begin{pmatrix}
a & b & 0 \\ c & d & 0 \\ 0 & 0 & t \end{pmatrix}
= \begin{pmatrix}
td & -qtb & 0 \\ -q^{-1}tc & ta & 0 \\ 0 & 0 & ad - q^{-1}bc
\end{pmatrix}.
\]
Thus, to check that $\pi$ is a Hopf algebra homomorphism, it remains
only to verify the relation
$S_{SL_q(2)} \circ \pi = \pi \circ S_{GL_q(2)}$:
\begin{align*}
\pi \circ S_{GL_q(2)}(a) 
&= \pi(td) = d = S_{GL_q(2)}(a) = S_{GL_q(2)}(\pi(a)),
\\
\pi \circ S_{GL_q(2)}(b) 
&= \pi(-qtb) = -qb = S_{GL_q(2)}(b) = S_{GL_q(2)}(\pi(b)),
\\
\pi \circ S_{GL_q(2)}(c) 
&= \pi(-q^{-1}tc) = -q^{-1}c = S_{GL_q(2)}(c) = S_{GL_q(2)}(\pi(c)),
\\
\pi \circ S_{GL_q(2)}(d) 
&= \pi(ta) = a = S_{GL_q(2)}(d) = S_{GL_q(2)}(\pi(d)),
\\
\pi \circ S_{GL_q(2)}(t) 
&= \pi(ad - q^{-1} bc) = ad - q^{-1} bc = 1 = S_{GL_q(2)}(1)
= S_{GL_q(2)}(\pi(t)),
\end{align*}
because $ad - q^{-1} bc = 1$ in $SL_q(2)$. As a result, the same
computations as in Example~\ref{eg:pi-GL2-to-SL2} show that $GL_q(2)$
is a quantum principal bundle over $B$, its subalgebra generated
by~$t$.

To compute $\tau$ now we use the \textit{quantum determinant}
$ad - q^{-1}bc$ to observe that the map
$\tau \: SL_q(2) \to  GL_q(2) \ox_B GL_q(2)$, with
$\tau(h) := \chi^{-1}(1 \ox h)$, is given by
\begin{align*}
\tau(a) &= \chi^{-1}(1 \ox a) = \chi^{-1}((ad - q^{-1}bc) \ox a) 
= d \ox_B a - q b \ox_B c,
\\
\tau(b) &= \chi^{-1}(1 \ox b) = \chi^{-1}((ad - q^{-1}bc) \ox b) 
= d \ox_B b - q^{-1}b \ox_B d,
\\
\tau(c) &= \chi^{-1}(1 \ox c) = \chi^{-1}((ad - q^{-1}bc) \ox c) 
= a \ox_B c - q^{-1}c \ox_B a,
\\
\tau(d) &= \chi^{-1}(1 \ox d) = \chi^{-1}((ad - q^{-1}bc) \ox d) 
= a \ox_B d - q^{-1}c \ox_B b,
\end{align*}
on the generators of $SL_q(2)$.
\end{example}

Around the same time as \cite{Brzezinski94} and
\cite{BrzezinskiMajid}, the same ideas were addressed in
\cite{Durdevic95}, working on $*$-algebras: ``In the framework of
noncommutative differential geometry, the Hopf--Galois extensions are
understandable as analogues of principal bundles''.


\section{Conclusions} 

There are two main approaches to the problem of finding an adequate
generalization of a principal bundle in noncommutative geometry: the
noncommutative algebraic approach and the $C^*$-algebraic formulation.
The noncommutative algebraic setting of Hopf--Galois extensions has
limitations when trying to extend principal bundles beyond the compact
case; nevertheless, it is the consolidated point of view. It extends
the injectivity of the canonical map
$\can \: X \x G \to X \x_G X : (x,g) \mapsto (x,x\. g)$, 
in the form of the surjectivity of the homomorphism 
$\can^* \: C(X \x_G X) \to C(X \x G) : f \mapsto f \circ \can$. As a
map acting between algebras, it can be generalized in such a way that
the choice of the type of algebra shows the kind of extension one
wants to achieve. The venue explored in these notes was the one that
replaces the group $G$ by a more general structure, namely a Hopf
algebra.

Recall that a driving idea in noncommutative geometry is to replace a
space $X$ by a commutative algebra of functions $A(X)$ from $X$ to
$\bC$, in such a way that knowledge of $A(X)$ allows one to recover
$X$ up to an isomorphism in the appropriate category. One can then
replace $A(X)$ by a possibly noncommutative algebra $\sA$ and regard
it as `the algebra of functions on a putative noncommutative space'.
Furthermore, if $f \: X \to Y$ is a bijection between spaces, the map
$f^* \: A(Y) \to A(X) : h \mapsto h\circ f$ is an isomorphism of
algebras. Conversely, having an algebra map $F \: \sA \to \sB$, one
may choose to regard it as $F = f^*$ where $f \: X \to Y$ is a
(possibly non-existing) map between (possibly non-existing) spaces $X$
and~$Y$.

The article \cite{Schneider1990} stands as the point of departure for
seeking a noncommutative substitute for a principal bundle, providing
the main tool -- the dualization process -- for the passage from
modules to comodules and from algebras to coalgebras. The works
\cite{Durdevic1993, Durdevic95, Brzezinski94, BrzezinskiMajid, Pflaum,
HajacThesis} extend this vision. The viewpoint of a Hopf--Galois
extension as being a dual notion to a principal bundle was
consolidated in \cite{BrzezinskiMajid2, Schauenburg1998, Brzezinski99,
HajacMajid}. Recently \cite{AschieriFioresiLatini,
AschieriBieliavskyPaganiSchenkel, AschieriLandiPagani,
AschieriLandiPagani2, BrzezinskiSzymanski} gave a refreshed treatment
to the subject. For a coaction $\dl \: A \to A \ox H$ of the
bialgebra~$H$ on the algebra~$A$, its coinvariant subalgebra is
$A^{\co H} := \set{a \in A : \dl(a) = a \ox 1_H}$, and $A$ is a
coalgebra extension of $A^{\co H}$ by $H$. If $A$ and $B$ are
$k$-algebras with $B \subset A$ and some algebra coaction $\dl$ of a
bialgebra $H$ on~$A$ leaves $B$ coinvariant, then $A$ is a coalgebra
extension of $B$ by~$H$. The balanced tensor product is the quotient
$A \ox_B A := (A \ox A)/J_B$ with $J_B$ the ideal generated by the
elements of the form $ab \ox c - a \ox bc$ for $a,c \in A$, $b \in B$.
The canonical map for~$\dl$ is the $k$-linear map
$\can \: A \ox_B A \to A \ox H : a \ox_B c \mapsto a\,\dl(c)$. The
extension is Hopf--Galois if this map is bijective.

A bialgebra $H$ is actually a Hopf algebra, provided that its
canonical map is invertible (see
Proposition~\ref{pr:Hopf-and-can-map}). More generally, if a bialgebra
$H$ admits a coaction whose canonical map is bijective, then $H$ has a
Hopf algebra structure (see
Proposition~\ref{pr:bialgebra-with-coaction-and-bijective-map} for the
finite-dimensional case and
Proposition~\ref{pr:Hopf-and-faithfully-flat} for the flat case).

The very first definition of quantum group that accommodates our
algebraic approach is the one due to Drinfel'd: a \textit{quantum
group} is a noncommutative noncocommutative Hopf algebra. Some
authors, like \cite{Pareigis}, call them \textit{proper quantum
groups}. For every proper quantum group~$H$, $k \subseteq H$ is a
Hopf--Galois extension of~$k$ by the bialgebra $H$. Any Hopf algebra
$H$ satisfies the following properties:
\begin{enumerate}
\item
$(\Dl \ox \id_H) \Dl = (\id_H \ox \Dl)\Dl : H \to H \ox H \ox H$, and
\item
$(H \ox 1)\Dl(H) = H \ox H = \Dl(H)(1 \ox H)$,
\end{enumerate}
which are algebraic versions of the requirements for a compact quantum
group. The cancellation property (b) is satisfied by every matrix
quantum group.

In \cite{Brzezinski94} and \cite{BrzezinskiMajid}, the notion of a
translation map is dualized to the noncommutative setting, showing
that the notion of a `quantum principal bundle' is equivalent to the
existence of such a generalized translation map. A refreshed treatment
is given in \cite{AschieriBieliavskyPaganiSchenkel,
AschieriLandiPagani}: a \textit{quantum subgroup} of a given Hopf
algebra $H$ is another Hopf algebra $H'$ together with a surjective
Hopf algebra homomorphism $\pi \: H \to H'$ which induces on $H$ the
structure of a right $H'$-comodule algebra. The subalgebra
$B := H^{\co H'} \subseteq H$ of coinvariants is called a
\textit{quantum homogeneous $H$-space}. When the associated canonical
map $\chi \: H \ox_B H \to H \ox H'$ is bijective, i.e.,
$B \subseteq H$ is a Hopf--Galois extension, $H$ is called a
\textit{quantum principal bundle} over the quantum homogeneous
space~$B$. This is less restrictive than the requirement in
\cite{Brzezinski94} of exactness of the coaction. In
\cite{AschieriLandiPagani} the authors study conditions for the
canonical map $\chi$ to be an algebra map in the setting of
coquasitriangular Hopf algebras, showing that the canonical map is an
isomorphism, whose inverse is determined by its restriction to
$1 \ox H$, named the \textit{translation map}.


\subsection*{Acknowledgments}

Support from the Vicerrectoría de Investigación of the Universidad de
Costa Rica is acknowledged. Special thanks to Joseph C. Várilly for
constructive criticism.


\begin{center}
Last updated: \today
\end{center}

\end{document}